\def\singlespace{\def\baselinestretch{1}\@normalsize}
\def\singlespace{\def\baselinestretch{1}\@normalsize}
\numberwithin{equation}{section}
\renewcommand{\hat}{\widehat}
\newcommand{\bfm}[1]{\ensuremath{\mathbf{#1}}}
     \def\bA{\bfm A}          
     \def\bB{\bfm B}     \def\cB{{\cal  B}}     
\def\bc{\bfm c}     \def\bC{\bfm C}     \def\cC{{\cal  C}}     
     \def\bD{\bfm D}          
\def\be{\bfm e}     \def\bE{\bfm E}          
    \def\bF{\bfm F}
     \def\bI{\bfm I}
     \def\bL{\bfm L}     \def\cL{{\cal  L}}     
     \def\bM{\bfm M}     \def\cM{{\cal  M}}     
          \def\cN{{\cal  N}}     
          \def\cP{{\cal  P}}     
     \def\bR{\bfm R}          
     \def\bS{\bfm S}     \def\cS{{\cal  S}}     
\def\bu{\bfm u}     \def\bU{\bfm U}          
\def\bv{\bfm v}     \def\bV{\bfm V}          
     \def\bW{\bfm W}     \def\cW{{\cal  W}}     
\def\bx{\bfm x}     \def\bX{\bfm X}          
\def\by{\bfm y}     \def\bY{\bfm Y}          
\def\bz{\bfm z}     \def\bZ{\bfm Z}          
\def\bzero{\bfm 0}
\newcommand{\bfsym}[1]{\ensuremath{\boldsymbol{#1}}}
       \def \bbeta    {\bfsym{\beta}}
\def \bepsilon {\bfsym{\epsilon}}     
         \def \btheta   {\bfsym{\theta}}
\def \brho     {\bfsym{\rho}}
\def \bGamma   {\bfsym{\Gamma}}       \def \bDelta   {\bfsym{\Delta}}
\def \bTheta   {\bfsym{\Theta}}       \def \bLambda  {\bfsym{\Lambda}}
\def \bSigma   {\bfsym{\Sigma}}
\renewcommand{\hat}{\widehat}
\def \heps     {\hat{\heps}}
\DeclareMathOperator*{\argmin}{argmin}
\DeclareMathOperator{\diag}{diag}
\DeclareMathOperator{\E}{E}
\DeclareMathOperator{\rank}{rank}
\DeclareMathOperator{\sgn}{sgn}
\DeclareMathOperator{\Var}{Var}
\DeclareMathOperator{\tr}{tr}
\def \sgn   {\mbox{sgn}}
\def\today{\ifcase\month\or
  January\or February\or March\or April\or May\or June\or
  July\or August\or September\or October\or November\or December\fi
  \space\number\day, \number\year}
\def \newpage {\vfill\eject}
\newdimen\biblioindent\biblioindent=30pt
\newcommand{\beq}  {\begin{equation}}
\newcommand{\eeq}  {\end{equation}}
\newcommand{\beqn} {\begin{eqnarray}}
\newcommand{\eeqn} {\end{eqnarray}}
\newcommand{\beqnn}{\begin{eqnarray*}}
\newcommand{\eeqnn}{\end{eqnarray*}}
\renewcommand{\baselinestretch}{1.33}
\newtheorem{lem}{Lemma}
\newtheorem{thm}{Theorem}
\newtheorem{rem}{Remark}
\newcounter{CondCounter}
\newcommand{\lonenorm}[1]{\lVert#1\rVert_1}
\newcommand{\ltwonorm}[1]{\lVert#1\rVert_2}
\newcommand{\opnorm}[1]{\lVert#1\rVert_{op}}
\newcommand{\fnorm}[1]{\lVert#1\rVert_F}
\newcommand{\nnorm}[1]{\lVert#1\rVert_N}
\newcommand{\supnorm}[1]{ \lVert#1  \rVert_{\max}}
\newcommand{\inn}[1]{\langle #1 \rangle}
\newcommand{\truncate}[1]{\sgn(#1)(|#1|\wedge \tau)}
\newcommand{\shrunk}[1]{#1 (\|#1\|_2 \wedge \tau)/\ltwonorm{#1}}
\def \Rd       {\mathbb{R}^d}
\def \Rdd	{\mathbb{R}^{d_1\times d_2}}
\def \RR	{\mathbb{R}}
\def \vec	{\text{vec}}
\def \mat {\text{mat}}
\def \ind {1}
\def \Tr {\text{Tr}}
\def \bbX {\mathbb{X}}
\def \bbY {\mathbb{Y}}
\begin{document}

	\title{\vspace*{-0.5 in} A Shrinkage Principle for Heavy-Tailed Data:\\
    High-Dimensional Robust Low-Rank Matrix Recovery\thanks{
    The research was supported by NSF grants DMS-1206464 and DMS-1406266 and NIH grant R01-GM072611-12. }}
	\author{Jianqing Fan, Weichen Wang, Ziwei Zhu\\
     \normalsize
    Department of Operations Research and Financial Engineering\\
    \normalsize
    Princeton University.
    \date{}}

	\maketitle
	
	\begin{abstract}
	This paper introduces a simple principle for robust high-dimensional statistical inference via an appropriate shrinkage on the data.  This widens the scope of high-dimensional techniques, reducing the moment conditions from sub-exponential or sub-Gaussian distributions to merely bounded second or fourth moment. As an illustration of this principle, we focus on robust estimation of the low-rank matrix $\Theta^*$ from the trace regression model $Y=\Tr (\Theta^{*T}X) +\epsilon$.  It encompasses four popular problems: sparse linear models, compressed sensing, matrix completion and multi-task regression.  We propose to apply penalized least-squares approach to appropriately truncated or shrunk data. Under only bounded $2+\delta$ moment condition on the response, the proposed robust methodology yields an estimator that possesses the same statistical error rates as previous literature with sub-Gaussian errors. For sparse linear models and multi-tasking regression, we further allow the design to have only bounded fourth moment and obtain the same statistical rates, again, by appropriate shrinkage of the design matrix. As a byproduct, we give a robust covariance matrix estimator and establish its concentration inequality in terms of the spectral norm when the random samples have only bounded fourth moment. Extensive simulations have been carried out to support our theories.
	\end{abstract}
	
	\textbf{Keywords:} Robust Statistics, Shrinkage, Heavy-Tailed Data, Trace Regression, Low-Rank Matrix Recovery, High-Dimensional Statistics.
	
\newpage	
	\section{Introduction}
		
		Heavy-tailed distributions are ubiquitous in modern statistical analysis and machine learning problems. They are stylized features of high-dimensional data.  By chance alone, some of observable variables in high-dimensional datasets can have heavy or moderately heavy tails (see right panel of Figure~\ref{fig1}).  It has been widely known that financial returns and macroeconomic variables exhibit heavy tails, and large-scale imaging datasets in biological studies are often corrupted by heavy-tailed noises due to limited measurement precisions. Figure~\ref{fig1} provides some empirical evidence on this which is pandemic to high-dimensional data.
These stylized features and phenomena contradict the popular assumption of sub-Gaussian or sub-exponential noises in the theoretical analysis of standard statistical procedures.  They also have adverse impacts on the methods that are popularly used. Simple and effective principles are needed for dealing with moderately heavy or heavy tailed data.

\begin{figure}[thbp]
\begin{center}
{\includegraphics[height=1.8 in]{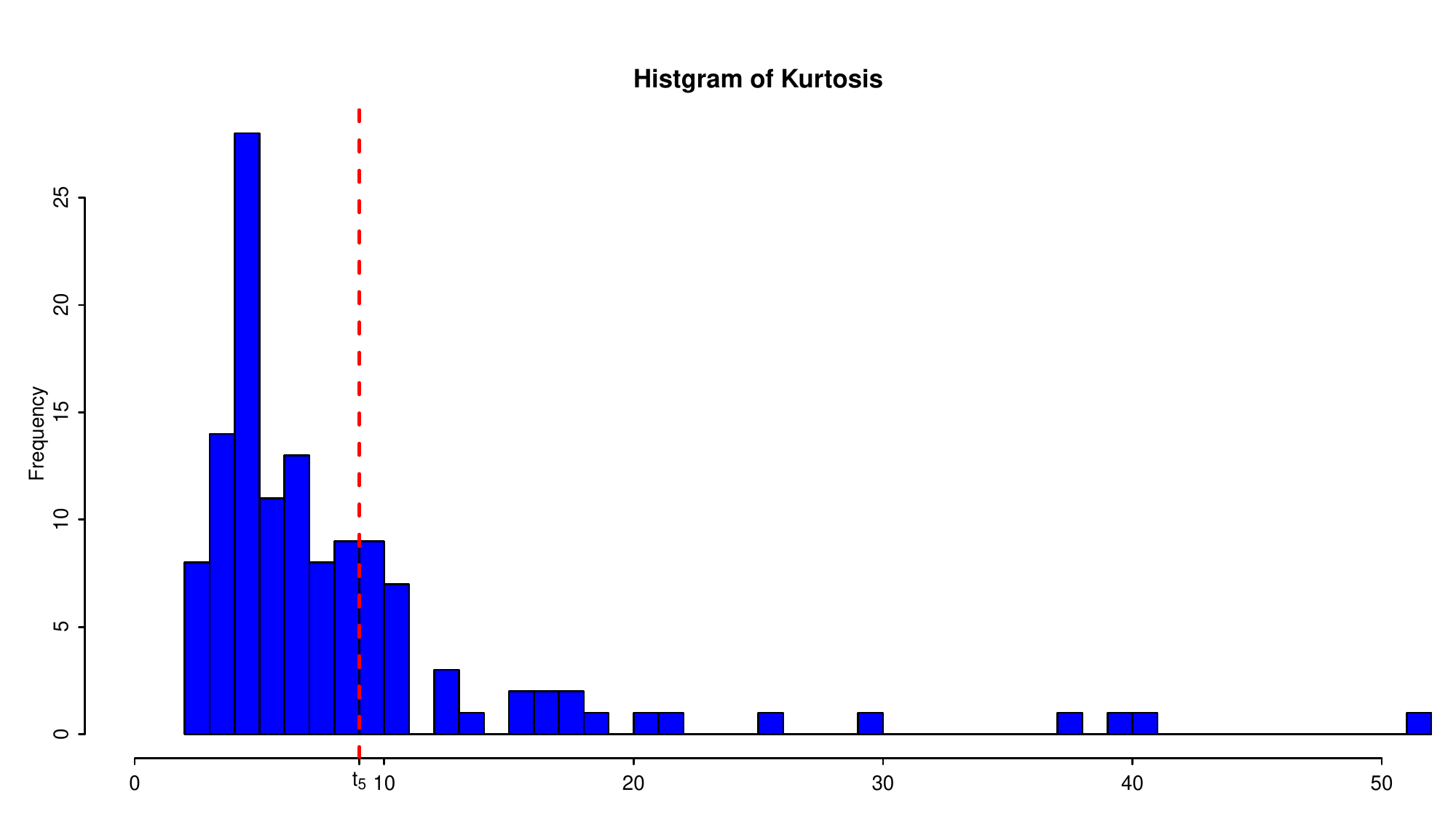} \includegraphics[height=1.7 in]{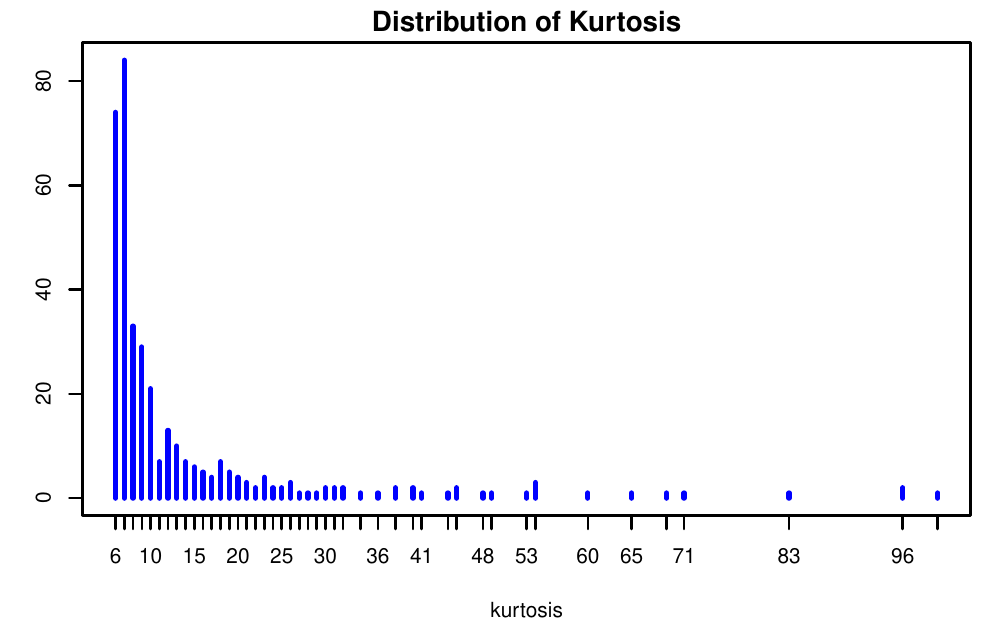}}
\vspace{-0.2 in}
\end{center}
\caption{\small {\bf Distributions of kurtosis of macroeconomic variables and gene expressions}. Red dashline marks variables with empirical kurtosis equals to that of $t_5$-distribution.  Left panel: For 131 macroeconomics variables in \cite{stock2002macroeconomic}.  Right panel: For logarithm of expression profiles of 383 genes based on RNA-seq for autism data \citep{gupta2014transcriptome}, whose kurtosis is bigger than that of $t_5$ among 19122 genes. \label{fig1} }
\end{figure}

		Recent years have witnessed increasing literature on the robust mean estimation when the population distribution is heavy-tailed. \cite{Cat12} proposed a novel approach that is through minimizing a robust empirical loss. Unlike the traditional $\ell_2$ loss, the robust loss function therein penalizes large deviations, thereby making the correspondent M-estimator  insensitive to extreme values. It turns out that when the population has only finite second moment, the estimator has exponential concentration around the true mean and enjoys the same rate of statistical consistency as the sample average for sub-Gaussian distributions. \cite{BJL15} pursued the Catoni's mean estimator further by applying it to empirical risk minimization. \cite{FLW16} utilized the Huber loss with diverging threshold, called robust approximation to quadratic (RA-quadratic), in a sparse regression problem and showed that the derived M-estimator can also achieve the minimax statistical error rate. \cite{Loh15} studied the statistical consistency and asymptotic normality of a general robust $M$-estimator and provided a set of sufficient conditions to achieve the minimax rate in the high-dimensional regression problem.
		
		Another  effective approach to handle heavy-tailed distribution is the so-called ``median of means" approach, which can be traced back to \cite{NYD82}. The main idea is to first divide the whole samples into several parts and take the median of the means from all  pieces of sub-samples as the final estimator. This ``median of means" estimator also enjoys exponential large deviation bound around the true mean. \cite{HSa16} and \cite{Min15} generalized this idea to multivariate cases and applied it to robust PCA, high-dimensional sparse regression and matrix regression, achieving minimax optimal rates up to logarithmic factors.
		

In this paper, we propose a simple and effective principle: truncation of univariate data and more generally shrinkage of multivariate data to achieve the robustness. We will illustrate our ideas through a general model called the trace regression
$$
    Y=\Tr (\bTheta^{*T}\bX)+\epsilon,
$$
which embraces linear regression, matrix or vector compressed sensing, matrix completion and multi-tasking regression as specific examples. The goal is to estimate the coefficient matrix $\bTheta^*\in \RR^{d_1\times d_2}$, which is assumed to have a nearly low-rank structure in the sense that its Schatten norm is constrained: $\sum\limits_{i=1}^{\min(d_1, d_2)} \sigma_i(\bTheta^*)^q\le \rho$ for $0\le q<1$, where $\sigma_i(\bTheta^*)$ is the $i^{th}$ singular value of $\bTheta^*$, i.e., the square-root of the $i^{th}$ eigenvalue of $\bTheta^*{}^T \bTheta^*$. In other words, the singular values of $\bTheta^*$ decay fast enough so that $\bTheta^*$ can be well approximated by a low-rank matrix. We always consider the high-dimensional setting where the sample size $n\ll d_1d_2$. As we shall see, appropriate data shrinkage allows us to recover $\bTheta^*$ with only bounded moment conditions on noise and design.

As the most simple and important example of low-rank trace regression, sparse linear regression and compressed sensing have become a hot topic in statistics research in the past two decades. See, for example, \cite{Tib96}, \cite{CDS01}, \cite{FLi01}, \cite{Don06}, \cite{CTa06}, \cite{CTa07}, \cite{candes2008restricted}, \cite{nowak2007gradient}, \cite{fan2008sure}, \cite{zou2008one}, \cite{BRT09}, \cite{zhang2010nearly}, \cite{NWa12}, \cite{donoho2013accurate}. These pioneering works explore the sparsity to achieve accurate signal recovery in high dimensions.

Recently significant progresses have been made on low-rank matrix recovery under high-dimensional settings. One of the most well-studied approaches is the penalized least-squares method. \cite{NWa11} analyzed the nuclear norm penalization in estimating nearly low-rank matrices under the trace regression model. Specifically, they derived non-asymptotic estimation error bounds in terms of the Frobenius norm when the noise is sub-Gaussian. \cite{RTs11} proposed to use a Schatten-$p$ quasi-norm penalty where $p\le 1$, and they derived non-asymptotic bounds on the prediction risk and Schatten-$q$ risk of the estimator, where $q\in [p,2]$. Another effective method is through nuclear norm minimization under affine fitting constraint. Other important contributions include \cite{RFP10}, \cite{CPl11}, \cite{CZh14}, \cite{CZh15}, etc. When the true low-rank matrix $\bTheta^*$ satisfies certain restricted isometry property (RIP) or similar properties, this approach can exactly recover $\bTheta^*$ under the noiseless setting and enjoy sharp statistical error rate with sub-Gaussian and sub-exponential noise.

		There has also been great amount of work on matrix completion.
\cite{candes2009exact} considered matrix completion under noiseless settings and gave conditions under which exact recovery is possible.
\cite{CPl10} proposed to fill in the missing entries of the matrix by nuclear-norm minimization subject to data constraints, and showed that $rd\log^2 d$ noisy samples suffice to recover a $d\times d$ rank-$r$ matrix with error that is proportional to the noise level.
\cite{recht2011simpler} improves the results of \cite{candes2009exact} on the number of observed entries required to reconstruct an unknown low-rank matrix.
\cite{NWa12} instead used nuclear-norm penalized least squares to recover the matrix. They derived the statistical error of the corresponding M-estimator and showed that it matched the information-theoretic lower bound up to logarithmic factors.

		Our work aims to handle the presence of heavy-tailed, asymmetrical and heteroscedastic noises in the general trace regression. Based on the shrinkage of data, we developed a new loss function called the robust quadratic loss, which is constructed by plugging robust covariance estimators in the $\ell_2$ risk function. Then we obtain the estimator $\widehat\bTheta$ by minimizing this new robust quadratic loss plus nuclear-norm penalty. By tailoring the analysis of \cite{NRW12} to this new loss, we can establish statistical rates in estimating the matrix $\bTheta^*$ that are the same as those in \cite{NRW12} for the sub-Gaussian distributions, while allowing the noise and design to have much heavier tails.  This result is very generic and applicable to all four specific afforementioned examples.
		
		Our robust approach is particularly simple:  it truncates or shrinks appropriately the response variables, depending on whether the responses are univariate or multivariate.    Under the setting of sub-Gaussian design, unusually large responses are very likely to be due to the outliers of noises. This explains why we need to truncate the responses when we have light-tailed covariates. Under the setting of heavy-tailed covariates, we need to truncate the designs as well. It turns out that appropriate truncation does not induce significant bias or hurt the restricted strong convexity of the loss function. With these data robustfications, we can then apply penalized least-squares method to recover sparse vectors or low-rank matrices.  Under only bounded moment conditions for either noise or covariates, our robust estimator achieves the same statistical error rate as that under the case of the sub-Gaussian design and noise. The crucial component in our analysis is the sharp spectral-norm convergence rate of robust covariance matrices based on data shrinkage. Of course, other robustifications of estimated covariance matrices, such as the RA-covariance estimation in \cite{FLW16}, are also possible to enjoy similar statistical error rates, but we will only focus on the shrinkage method, as it is easier to analyze and always semi-positive definite.
		

		
		It is worth emphasis that the successful application of the shrinkage sample covariance in multi-tasking regression inspires us to also study its statistical error in covariance estimation. It turns out that as long as the random samples $\{\bx_i\in \RR^d\}_{i=1}^N$ have bounded fourth moment in the sense that $\sup_{\bv\in \cS^{d-1}} \E (\bv^T\bx_i)^4 \le R \allowbreak <\infty$, where $\cS^{d-1}$ is the $d$-dimensional unit sphere, our $\ell_4$-norm shrinkage sample covariance $\widetilde\bSigma_n$ achieves the statistical error rate of order $O_P(\sqrt{d\log d/n})$ in terms of the spectral norm. This rate is the same, up to a logarithmic term, as that of the standard sample covariance matrix $\overline\bSigma_n$ with sub-Gaussian samples under the low-dimensional regime. Under the high-dimensional regime, $\widetilde\bSigma$ even outperforms $\overline\bSigma_n$ for sub-Gaussian random samples, since now the error rate of $\overline\bSigma_n$ deteriorates to $O_P(d/n)$ while the error rate of $\widetilde \bSigma$ is still $O_P(\sqrt{d\log d/n})$. This means even with light-tailed data, standard sample covariance can be inadmissible in terms of convergence rate when dimension is high. Therefore, shrinkage not only overcomes heavy-tailed corruption, but also mitigates curse of dimensionality. In terms of the elementwise max-norm, it is not hard to show that appropriate elementwise truncation of the data delivers a sample covariance with statistical error rate of order $O_P(\sqrt{\log d/n})$. This estimator can further be regularized if the true covariance has sparsity and other structure.  See, for example, \cite{MBu06}, \cite{BLe08}, \cite{LFa09}, \cite{CLi11}, \cite{CZh12}, \cite{FLM13}, among others.
		
		The  paper is organized as follows. In Section 2, we introduce the trace regression model and its four well-known examples: the linear model, matrix compressed sensing, matrix completion and multi-tasking regression. Then we develop the generalized $\ell_2$ loss, the truncated and shrinkage sample covariance and corresponding M-estimators. In Section 3, we present our main theoretical results. We first demonstrate through Theorem \ref{thm:1} the conditions on the robust covariance inputs to ensure the statistical error rate of the M-estimator. Then we apply this theorem to all the four specific aforementioned problems and derive explicitly the statistical error rate of our M-estimators. Section 4 derives the statistical error of the shrinkage covariance estimator in terms of the spectral norm. Finally we present simulation studies in Section 5, which demonstrate the advantage of our robust estimator over the standard one. The associated optimization algorithms are also discussed there. All the proofs are relegated to the Appendix A.

	\section{Models and methodology}
	
	We first collect the general notation before formulating the model and methodology.
	
	\subsection{Generic Notations}
	
	We follow the common convention of using boldface letters for vectors and matrices and using regular letters for scalars. For a vector $\bx$, define $\|\bx\|_q$ to be its $\ell_q$ norm; specifically, $\lonenorm{\bx}$ and $\ltwonorm{\bx}$ denote the $\ell_1$ norm and $\ell_2$ norm of $\bx$ respectively. We use $\RR^{d_1d_2}$ to denote the space of $d_1d_2$-dimensional real vectors, and use $\RR^{d_1\times d_2}$ to denote the space of $d_1$-by-$d_2$ real matrices. For a matrix $\bX\in \RR^{d_1\times d_2}$, define $\opnorm{\bX}$,  $\nnorm{\bX}$,  $\fnorm{\bX}$ and $\|\bX\|_{\max}$  to be its operator norm, nuclear norm, Frobenius norm and elementwise max norm respectively. We use $\vec(\bX)$ to denote vectorized version of $\bX$, i.e., $\vec(\bX)=(\bX_1^T, \bX_2^T, ..., \bX_{d_2}^T)^T$, where $\bX_j$ is the $j^{th}$ column of $\bX$. Conversely, for a vector $\bx \in \RR^{d_1 d_2}$, we use $\mat(\bx)$ to denote the $d_1$-by-$d_2$ matrix constructed by $\bx$, where $(x_{(j-1)d_1+1}, ..., x_{jd_1})^T$ is the $j^{\text{th}}$ column of $\mat(\bx)$.
	For any two matrices $\bA, \bB\in \Rdd$, define the inner product $\langle \bA, \bB \rangle:=\Tr (\bA^T\bB)$ where $\Tr$ is the trace operator.
	We denote $\diag(\bM_1, \cdots, \bM_n)$ to be the block diagonal matrix  with  the  diagonal  blocks  as $\bM_1, \cdots, \bM_n$.
	For two Hilbert spaces $\bA$ and $\bB$, we write $\bA \perp \bB$ if $\bA$ and $\bB$ are orthogonal to each other. 
	For two scalar series $\{a_n\}_{n=1}^{\infty}$ and $\{b_n\}_{n=1}^\infty$, we say $a_n\asymp b_n$ if there exist constants $0<c_1<c_2$ such that $c_1a_n \le b_n \le c_2a_n$ for $1\le n <\infty$.
	For a random variable $X$, define its sub-Gaussian norm $\|X\|_{\psi_2}:=\sup_{p\ge 1} (\E |X|^p)^{\frac{1}{p}}/\sqrt{p}$ and its sub-exponential norm $\|X\|_{\psi_1}:=\sup_{p\ge 1} (\E |X|^p)^{\frac{1}{p}}/p$. For a random vector $\bx\in \RR^d$, we define its sub-Gaussian norm $\|\bx\|_{\psi_2}:=\sup_{\bv\in \cS^{d-1}} \|\bv^T \bx\|_{\psi_2}$ and sub-exponential norm $\|\bx\|_{\psi_1}:=\sup_{\bv\in \cS^{d-1}} \|\bv^T \bx\|_{\psi_1}$. Given $x,y\in \RR$, we denote $\max(x, y)$ and $\min(x, y)$ by $x \vee y$ and $x\wedge y$ respectively. Let $\be_j$ be the unit vector with the $j^{th}$ element $1$ and other elements $0$.

	\subsection{Trace Regression}
	
	In this paper, we consider the trace regression, a general model that encompasses the linear regression, compressed sensing, matrix completion, multi-tasking regression, etc. Suppose we have $N$ matrices $\{\bX_i\in \Rdd\}_{i=1}^N$ and responses $\{Y_i \in \RR\}_{i=1}^N$.  We say $\{(Y_i, \bX_i)\}_{i=1}^N$ follow the trace regression model if
	\begin{equation}
		\label{eq:tcreg}
		Y_i=\inn{\bX_i, \bTheta^*} +\epsilon_i,
	\end{equation}
	where $\bTheta^*\in \Rdd$ is the true coefficient matrix, $\E \bX_i=\bzero$ and $\{\epsilon_i\}_{i=1}^N$ are independent noises satisfying $\E(\epsilon_i| \bX_i)=0$. Note that here we do not assume $\{\bX_i\}_{i=1}^N$ are independent to each other nor assume $\epsilon_i$ is independent to $\bX_i$. Model (\ref{eq:tcreg}) includes the following specific cases.
	\begin{itemize}
		\item {\bf Linear regression}: $d_1=d_2=d$, and $\{\bX_i\}_{i=1}^N$ and $\bTheta^*$ are diagonal. Let $\bx_i$ and $\btheta^*$ denote the vectors of diagonal elements of $\bX_i$ and $\bTheta^*$ respectively in the context of linear regerssion, i.e., $\bx_i = \diag(\bX_i)$ and $\btheta^* = \diag(\bTheta^*)$. Then, \eqref{eq:tcreg} reduces to familiar linear model:  $Y_i = \bx_i^T \btheta + \varepsilon_i$. Having a low-rank $\bTheta^*$ is then equivalent to having a sparse $\btheta^*$.

		\item {\bf Compressed sensing}: For matrix compressed sensing,
entries of $\bX_i$ jointly follow the Gaussian distribution or other ensembles.  For vector compressed sensing, we can take $\bX$ and $\bTheta^*$ as diagonal matrices.

		\item {\bf Matrix completion}: $\bX_i$ is a singleton, i.e., $\bX_i=\be_{j(i)}\be_{k(i)}^T$ for $1 \le j(i) \le d_1$ and $1 \le k(i) \le d_2$.  In other words, a random entry of the matrix $\bTheta$ is observed along with noise for each sample.

		\item {\bf Multi-tasking regression}:
The multi-tasking (reduced-rank) regression model is
	\begin{equation}
		\label{eq:2.2}
		\by_j=\bTheta^{*T}\bx_j+\bepsilon_j,\quad j=1,\cdots,n,
	\end{equation}
	where $\bx_j\in \RR^{d_1}$ is the covariate vector, $\by_j\in \RR^{d_2}$ is the response vector, $\bTheta^*\in \RR^{d_1\times d_2}$ is the coefficient matrix and $\bepsilon_j\in \RR^{d_2}$ is the noise with each entry independent to each other. See, for example,  \cite{kim2010tree} and \cite{velu2013multivariate}.  Each sample $(\by_j, \bx_j)$ consists of $d_2$ responses and is equivalent to $d_2$ data points in (\ref{eq:tcreg}), i.e., $\{(Y_{(j-1)d_2 + i} = y_{ji}, \bX_{(j-1)d_2 + i} = \bx_j\be_i^T)\}_{i=1}^{d_2}$. Therefore $n$ samples in (\ref{eq:2.2}) correspond to $N=nd_2$ observations in (\ref{eq:tcreg}).
\end{itemize}
	
	In this paper, we impose rank constraint on the coefficient matrix $\bTheta^*$. Rank constraint can be viewed as a generalized sparsity constraint for two-dimensional matrices. For linear regression, rank constraint is equivalent to the sparsity constraint since $\bTheta^*$ is diagonal. The rank constraint reduces the effective number of parameters in $\bTheta^*$ and arises frequently in many applications.
Consider the Netflix problem for instance, where $\bTheta^*_{ij}$ is the intrinsic score of film $j$ given by customer $i$ and we would like to recover the entire $\bTheta^*$ with only partial observations. Given that movies of similar types or qualities should receive similar scores from viewers, columns of $\bTheta^*$ should share colinearity, thus delivering a low-rank structure of $\bTheta^*$. The rationale of the model can also be understood from the celebrated factor model in finance and econometrics \citep{FY15}, which assumes that several market risk factors drive the returns of a large panel of stocks.  Consider $N \times T$ returns $\bY$ of $N$ stocks (like movies) over $T$ days (like viewers).  These financial returns are driven by $K$ factors $\bF$ ($K\times T$ matrix, representing $K$ risk factors realized on $T$ days) with a loading matrix $\bB$ ($N\times K$ matrix), where $K$ is much smaller than $N$ or $T$.  The factor model admits the following form:
$$
    \bY = \bB\bF + \bE
$$
where $\bE$ is idiosyncratic noise. Since $\bB\bF$ has a small rank $K$, $\bB\bF$ can be regarded as the low-rank matrix $\bTheta^*$ in the matrix completion problem. If all movies were rated by all viewers in the Netflix problem, the ratings should also be modeled as a low-rank matrix plus noise, namely, there should be several latent factors that drive ratings of movies.  The major challenge of the matrix completion problem is that there are many missing entries.

Being exactly low-rank is still too stringent to model the real-world situations. We instead consider $\bTheta^*$ satisfying
	\beq
		\label{eq:2.3}
		\cB_q(\bTheta^*):=\sum\limits_{i=1}^{d_1\wedge d_2} \sigma_i(\bTheta^*)^q \le \rho,
	\eeq
	where $0\le q\le 1$. Note that when $q=0$, the constraint \eqref{eq:2.3} is an exact rank constraint. Restriction on $\cB_q(\bTheta^*)$ ensures that the singular values decay fast enough; it is more general and natural than the exact low-rank assumption. In the analysis, we can allow $\rho$ to grow with dimensionality and sample size.
	
A popular method for estimating $\bTheta^*$ is the penalized empirical loss that solves $\widehat\bTheta\in\argmin_{\bTheta\in \cS} \cL(\bTheta)+\lambda_N\cP(\bTheta)$, where $\cS$ is a convex set in $\Rdd$, $\cL(\bTheta)$ is a loss function, $\lambda_N$ is a tuning parameter and $\cP(\bTheta)$ is a rank penalization function.  Most of the previous work, e.g., \cite{KLT11} and \cite{NWa11}, chose $\cL(\bTheta)=\sum_{1\le i\le N} (Y_i-\inn{\bTheta, \bX_i})^2$ and $\cP(\bTheta)=\nnorm{\bTheta}$, and derived the rate for $\fnorm{\widehat\bTheta-\bTheta^*}$ under the assumption of sub-Gaussian or sub-exponential noise. However, the $\ell_2$ loss is sensitive to outliers and is unable to handle the data with moderately heavy or heavy tails.
	
	\subsection{Robustifying $\ell_2$ Loss}
	
	We aim to accomodate heavy-tailed noise and design for the nearly low-rank matrix recovery by robustifying the traditional $\ell_2$ loss. We first notice that the $\ell_2$ risk can be expressed as
	\beq
		\begin{aligned}
			R(\bTheta) & =\E\cL(\bTheta)=\E(Y_i-\inn{\bTheta, \bX_i})^2\\
			& =\E Y_i^2-2\inn{\bTheta, \E Y_i\bX_i}+\vec(\bTheta)^T\E\bigl(\vec(\bX_i)\vec(\bX_i)^T\bigr)\vec(\bTheta) \\
			& \equiv \E Y_i^2-2\inn{\bTheta, \bSigma_{Y\bX}}+\vec(\bTheta)^T\bSigma_{\bX\bX}\vec(\bTheta).
		\end{aligned}
	\eeq
Ignoring $\E Y_i^2$,  if we substitute $\bSigma_{Y\bX}$ and $\bSigma_{\bX\bX}$ by their corresponding sample covariances, we obtain the empirical $\ell_2$ loss. This inspires us to define a generalized $\ell_2$ loss via
	\beq
		\label{eq:2.5}
		\cL(\bTheta)=-\inn{\widehat\bSigma_{Y\bX}, \bTheta}+\frac{1}{2}\vec(\bTheta)^T\widehat\bSigma_{\bX\bX}\vec(\bTheta),
	\eeq
	where $\widehat\bSigma_{Y\bX}$ and $\widehat\bSigma_{\bX\bX}$ are estimators of $\E Y_i\bX_i$ and $\E \vec(\bX_i)\vec(\bX_i)^T$ respectively.

In this paper, we study the following M-estimator of $\bTheta^*$ with the generalized $\ell_2$ loss:
	\beq
		\label{eq:2.6}
		\widehat\bTheta\in\argmin_{\bTheta\in \cS} -\inn{\widehat\bSigma_{Y\bX}, \bTheta}+\frac{1}{2}\vec(\bTheta)^T\widehat\bSigma_{\bX\bX}\vec(\bTheta)+\lambda_N\nnorm{\bTheta},
	\eeq
	where $\cS$ is a convex set in $\RR^{d_1\times d_2}$.  To handle heavy-tailed noise and design, we need to employ robust estimators $\widehat \bSigma_{Y\bX}$ and $\widehat\bSigma_{\bX\bX}$. For ease of presentation, we always first consider the case where the design is sub-Gaussian and the response is heavy-tailed, and then further allow the design to have heavy-tailed distribution if it is appropriate for the specific problem setup.
	
	
We now introduce the robust covariance estimators to be used in \eqref{eq:2.6} by the principle of truncation, or more generally shrinkage. The intuition is that shrinkage reduces sensitivity of the estimator to the heavy-tailed corruption. However, shrinkage induces bias. Our theories revolve around finding appropriate shrinkage level so as to ensure the induced bias is not too large and the final statistical error rate is sharp. Different problem setups have different forms of $\widehat\bSigma_{Y\bX}$ and $\widehat\bSigma_{\bX\bX}$, but the principle of shrinkage of data is universal. For the linear regression, matrix compressed sensing and matrix completion, in which the response is univariate,  $\widehat\bSigma_{Y\bX}$ and $\widehat\bSigma_{\bX\bX}$ take the following forms:
\beq
	\label{eq:2.7}
	\widehat\bSigma_{Y\bX}=\widehat\bSigma_{\widetilde Y \widetilde \bX}= \frac{1}{N}\sum\limits_{i=1}^N \widetilde Y_i \widetilde\bX_i \quad \text{and} \quad \widehat\bSigma_{\bX\bX}=\widehat\bSigma_{\widetilde \bX\widetilde \bX}= \frac{1}{N}\sum\limits_{i=1}^N \vec(\widetilde\bX_i) \vec(\widetilde\bX_i)^T,
\eeq
where tilde notation means truncated versions of the random variables if they have heavy tails and equals the original random variables (truncation threshold is infinite) if they have light tails. Note that this  construction of generalized quadratic loss is equivalent to truncating the data first and then employing the usual quadratic loss.

For the multi-tasking regression, similar idea continues to apply. However, writing \eqref{eq:2.2} in the general form of \eqref{eq:tcreg} requires adaptation of more complicated notation.   We choose
\beq
\label{eq:2.8}
	\begin{aligned}
		& \widehat\bSigma_{Y\bX}= \frac{1}{N}\sum\limits_{i=1}^n  \sum\limits_{j=1}^{d_2} \widetilde Y_{ij}\widetilde \bx_i\be_j^T= \frac{1}{d_2}\hat\bSigma_{\widetilde \bx \widetilde \by} \quad \text{and} \\
		& \widehat\bSigma_{ \bX\bX} = \frac{1}{N}\sum\limits_{i=1}^n \sum\limits_{j=1}^{d_2} \vec(\widetilde \bx_i\be_j^T)\vec(\widetilde \bx_i\be_j^T)^T= \frac{1}{d_2} \diag(\;\underbrace{\widehat\bSigma_{\widetilde\bx \widetilde \bx}, \cdots, \widehat\bSigma_{\widetilde \bx\widetilde \bx}}_{d_2}\;)\,,
	\end{aligned}
\eeq
where
\[
	\widehat\bSigma_{\widetilde\bx \widetilde \by}=\frac{1}{n}\sum\limits_{i=1}^n \widetilde \bx_i \widetilde\by_i^T \quad \text{and} \quad \widehat\bSigma_{\widetilde \bx\widetilde \bx}= \frac{1}{n}\sum\limits_{i=1}^n \widetilde\bx_i \widetilde\bx_i^T
\]
and $\widetilde \by_i$ and $\widetilde \bx_i$ are again transformed versions of $\by_i$ and $\bx_i$. The tilde means shrinkage for heavy-tailed variables and means identity mapping (no shrinkage) for light-tailed variables. The factor $d_2^{-1}$ is due to the fact that $n$ independent samples under model (\ref{eq:2.2}) are treated as $nd_2$ samples in (\ref{eq:tcreg}).
As we shall see, under only bounded moment assumptions of the design and noise, the generalized $\ell_2$ loss equipped with the proposed robust covariance estimators yields a sharp $\cM$-estimator $\widehat\bTheta$, whose statistical error rates match those established in \cite{NWa11} and \cite{NWa12} under the setting of sub-Gaussian design and noise.

	\section{Main results}
	
Our goal is to derive the statistical error rate of $\widehat\bTheta$ defined by ($\ref{eq:2.6}$). In the theoretical results of this section, we always assume $d_1, d_2\ge 2$ and $\rho>1$ in \eqref{eq:2.3}. We first present the following general theorem that gives the estimation error $\fnorm{\widehat\bTheta-\bTheta^*}$.
	\begin{thm}
		\label{thm:1}
		Define $\widehat\bDelta= \widehat\bTheta- \bTheta^*$, where $\bTheta^*$ satisfies $\cB_q(\bTheta^*) \le \rho$. Suppose $\vec(\widehat\bDelta)^T\widehat\bSigma_{\bX\bX}\allowbreak\vec(\widehat\bDelta) \ge \kappa_{\cL}\fnorm{\widehat\bDelta}^2$, where $\kappa_{\cL}$ is a positive constant that does not depend on $\widehat\bDelta$. Choose $\lambda_N\ge 2\opnorm{\widehat\bSigma_{Y\bX} - \mat(\widehat\bSigma_{\bX\bX}\vec(\bTheta^*))}$. Then we have for some constants $C_1$ and $C_2$,
		\[
			\fnorm{\widehat\bDelta}^2 \le C_1\rho\Bigl(\frac{\lambda_N}{\kappa_{\cL}}\Bigr)^{2-q} \quad \text{and} \quad \nnorm{\widehat\bDelta} \le C_2\rho\Bigl(\frac{\lambda_N}{\kappa_{\cL}}\Bigr)^{1-q}.
		\]
	\end{thm}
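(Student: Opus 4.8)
The plan is to adapt the decomposable-regularizer analysis of \cite{NRW12} to the exactly quadratic loss \eqref{eq:2.5}. Write $\bW := \widehat\bSigma_{Y\bX} - \mat(\widehat\bSigma_{\bX\bX}\vec(\bTheta^*))$, so the hypothesis reads $\lambda_N \ge 2\opnorm{\bW}$. Because $\cL$ is quadratic with Hessian $\widehat\bSigma_{\bX\bX}$, expanding about $\bTheta^*$ yields the exact identity $\cL(\widehat\bTheta) - \cL(\bTheta^*) = \tfrac12\vec(\widehat\bDelta)^T\widehat\bSigma_{\bX\bX}\vec(\widehat\bDelta) - \inn{\bW,\widehat\bDelta}$. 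Feeding this into the optimality inequality $\cL(\widehat\bTheta)+\lambda_N\nnorm{\widehat\bTheta}\le \cL(\bTheta^*)+\lambda_N\nnorm{\bTheta^*}$ from \eqref{eq:2.6}, and bounding the cross term by nuclear/operator-norm duality, $|\inn{\bW,\widehat\bDelta}|\le \opnorm{\bW}\nnorm{\widehat\bDelta}\le \tfrac{\lambda_N}{2}\nnorm{\widehat\bDelta}$, gives the basic inequality $\tfrac12\vec(\widehat\bDelta)^T\widehat\bSigma_{\bX\bX}\vec(\widehat\bDelta)\le \tfrac{\lambda_N}{2}\nnorm{\widehat\bDelta}+\lambda_N(\nnorm{\bTheta^*}-\nnorm{\widehat\bTheta})$.

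Second, I would handle the near-low-rank structure through a rank-$r$ truncation of $\bTheta^*$. Fixing an integer $r$, take a singular value decomposition $\bTheta^*=\bU\bD\bV^T$, let $\overline{\cM}_r$ be the model subspace attached to the top $r$ left/right singular vectors in the sense of \cite{NRW12}, and combine decomposability of the nuclear norm with the triangle inequality to obtain $\nnorm{\bTheta^*}-\nnorm{\widehat\bTheta}\le \nnorm{\widehat\bDelta_{\overline{\cM}_r}}-\nnorm{\widehat\bDelta_{\overline{\cM}_r^\perp}}+2\sum_{j>r}\sigma_j(\bTheta^*)$. Substituting this into the basic inequality, splitting $\nnorm{\widehat\bDelta}\le \nnorm{\widehat\bDelta_{\overline{\cM}_r}}+\nnorm{\widehat\bDelta_{\overline{\cM}_r^\perp}}$, and using $\vec(\widehat\bDelta)^T\widehat\bSigma_{\bX\bX}\vec(\widehat\bDelta)\ge 0$, I get the cone-type inequality $\nnorm{\widehat\bDelta_{\overline{\cM}_r^\perp}}\le 3\nnorm{\widehat\bDelta_{\overline{\cM}_r}}+4\sum_{j>r}\sigma_j(\bTheta^*)$; and since $\widehat\bDelta_{\overline{\cM}_r}$ has rank at most $2r$, also $\nnorm{\widehat\bDelta_{\overline{\cM}_r}}\le \sqrt{2r}\,\fnorm{\widehat\bDelta}$.

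Third, I would insert the restricted strong convexity hypothesis $\vec(\widehat\bDelta)^T\widehat\bSigma_{\bX\bX}\vec(\widehat\bDelta)\ge \kappa_{\cL}\fnorm{\widehat\bDelta}^2$ directly into the basic inequality; no first-order Taylor-remainder control is needed here precisely because the loss is exactly quadratic, so the hypothesis is used only at the single point $\widehat\bDelta$. Together with the previous step this produces a scalar quadratic inequality $\kappa_{\cL}\fnorm{\widehat\bDelta}^2\lesssim \lambda_N\sqrt{r}\,\fnorm{\widehat\bDelta}+\lambda_N\sum_{j>r}\sigma_j(\bTheta^*)$, whence $\fnorm{\widehat\bDelta}^2\lesssim (\lambda_N/\kappa_{\cL})^2 r + (\lambda_N/\kappa_{\cL})\sum_{j>r}\sigma_j(\bTheta^*)$. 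Finally I would optimize over $r$ using $\cB_q(\bTheta^*)\le \rho$: taking $r$ to be the number of singular values of $\bTheta^*$ larger than the threshold $\lambda_N/\kappa_{\cL}$ forces $r\le \rho(\lambda_N/\kappa_{\cL})^{-q}$ and $\sum_{j>r}\sigma_j(\bTheta^*)\le \rho(\lambda_N/\kappa_{\cL})^{1-q}$, so that both terms are of order $\rho(\lambda_N/\kappa_{\cL})^{2-q}$, giving the Frobenius bound. The nuclear-norm bound then follows from $\nnorm{\widehat\bDelta}\le 4\nnorm{\widehat\bDelta_{\overline{\cM}_r}}+4\sum_{j>r}\sigma_j(\bTheta^*)\le 4\sqrt{2r}\,\fnorm{\widehat\bDelta}+4\sum_{j>r}\sigma_j(\bTheta^*)$ together with the Frobenius bound just obtained and the same choice of $r$.

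The hard part is not any individual estimate but the bookkeeping for the nearly-low-rank regime: carrying the approximation-error term $\sum_{j>r}\sigma_j(\bTheta^*)$ cleanly through the cone inequality and the quadratic inequality, and choosing the truncation level $\lambda_N/\kappa_{\cL}$ so that the estimation contribution $r(\lambda_N/\kappa_{\cL})^2$ and the approximation contribution $(\lambda_N/\kappa_{\cL})\sum_{j>r}\sigma_j(\bTheta^*)$ balance. Everything else is the routine specialization of the standard oracle argument, and it is in fact cleaner than in much of the $M$-estimation literature because the quadratic loss lets restricted strong convexity be assumed only at $\widehat\bDelta$ rather than uniformly over a cone.
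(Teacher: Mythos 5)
Your proposal is correct and follows essentially the same route as the paper's proof: the exact quadratic expansion plus optimality gives the basic inequality, the NRW12 decomposability/cone argument (which the paper imports as Lemma 1 of \cite{NRW12} and you re-derive) yields $\nnorm{\widehat\bDelta}\le 4\sqrt{2r}\fnorm{\widehat\bDelta}+4\sum_{j>r}\sigma_j(\bTheta^*)$, and thresholding the singular values at $\lambda_N/\kappa_{\cL}$ balances the two terms exactly as in the paper. No substantive differences.
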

	
		
		First of all, the above result is deterministic and nonasymptotic. As we can see from the theorem above, the statistical performance of $\widehat\bTheta$ relies on the restricted eigenvalue (RE) property of $\widehat\bSigma_{\bX\bX}$, which was first studied by \cite{BRT09}. When the design is sub-Gaussian, we choose $\widehat\bSigma_{\bX\bX}$ to be the traditional sample covariance, whose RE property has been well established (e.g.,  \cite{RZh13}, \cite{NWa11} and \cite{NWa12}). We will specify these results when we need them in the sequel. When the design only satisfies bounded moment conditions, we choose $\widehat\bSigma_{\bX\bX}= \widehat\bSigma_{\widetilde\bX\widetilde\bX}$, i.e., the sample covariance of shrunk $\bX$. We show that with appropriate level of shrinkage, $\widehat\bSigma_{\widetilde\bX \widetilde\bX}$ still retains the RE property, thus satisfying the conditions of the theorem. 
		
		Secondly, the conclusion of the theorem says that $\fnorm{\widehat\bDelta}^2$ and $\nnorm{\widehat\bDelta}$ are proportional to $\lambda_N^{2-q}$ and $\lambda_N^{1-q}$ respectively, but we require $\lambda_N\ge \opnorm{\widehat\bSigma_{Y\bX}-  \mat(\widehat\bSigma_{\bX\bX}\vec(\bTheta^*)}$. This implies that the rate of $\opnorm{\widehat\bSigma_{Y\bX}-  \mat(\widehat\bSigma_{\bX\bX}\vec(\bTheta^*)}$ is crucial to the statistical error of $\widehat\bTheta$. In the following subsections, we will derive the rate of $\opnorm{\widehat\bSigma_{Y\bX}-  \mat(\widehat\bSigma_{\bX\bX}\vec(\bTheta^*)}$ for all the aforementioned specific problems with only bounded moment conditions on the response and design. Under such weak assumptions, we show that the proposed robust M-estimator possesses the same rates as those presented in \cite{NWa11, NWa12} with sub-Gaussian assumptions on the design and noise.
		
		

	\subsection{Linear Model}
	
	For the linear regression problem, since $\bTheta^*$ and $\{\bX_i\}_{i=1}^N$ are all $d\times d$ diagonal matrices, we denote the diagonals of $\bTheta^*$ and $\{\bX_i\}_{i=1}^N$ by $\btheta^*$ and $\{\bx_i\}_{i=1}^N$ respectively for ease of presentation. The optimization problem in \eqref{eq:2.6} reduces to
	\beq
		\label{eq:3.1}
		\widehat\btheta \in \argmin_{\btheta\in \RR^d} -\widehat\bSigma_{Y\bx}^T\btheta+ \frac{1}{2}\btheta^T\widehat\bSigma_{\bx\bx}\btheta + \lambda_N\|\btheta\|_1\,,
	\eeq
	where $\widehat\bSigma_{Y\bx}= \widehat\bSigma_{\widetilde Y \widetilde \bx}= N^{-1}\sum\nolimits_{i=1}^N \widetilde Y_i \widetilde\bx_i$, $\widehat\bSigma_{\bx\bx}=\widehat\bSigma_{\widetilde \bx\widetilde \bx}= N^{-1}\sum\nolimits_{i=1}^N \widetilde\bx_i \widetilde\bx_i^T$. When the design is sub-Gaussian, we only need to truncate the response. Therefore, we choose $\widetilde Y_i=\widetilde Y_i(\tau) = sgn(Y_i)(|Y_i| \wedge \tau)$ and $\widetilde \bx_i= \bx_i$, for some threshold $\tau$. When the design is heavy-tailed, we choose $\widetilde Y_i(\tau) = sgn(Y_i)(|Y_i| \wedge \tau_1)$ and $\widetilde x_{ij}=sgn(x_{ij})(|x_{ij}|\wedge \tau_2)$, where $\tau_1$ and $\tau_2$ are both predetermined threshold values. To avoid redundancy, we will not repeat stating these choices in lemmas or theorems in this subsection.
	
	To establish the statistical error rate of $\widehat\btheta$ in \eqref{eq:3.1}, in the following lemma, we derive the rate of $ \opnorm{\widehat\bSigma_{Y\bx} - \mat(\widehat\bSigma_{\bX\bX}\vec(\bTheta^*))}$ in \eqref{eq:2.6} for the sub-Gaussian design and bounded-moment (polynomial tail) design respectively. Note here that
	$$\opnorm{\widehat\bSigma_{Y\bx} - \mat(\widehat\bSigma_{\bX\bX}\vec(\bTheta^*))} \allowbreak= \supnorm{\widehat\bSigma_{Y\bx}- \widehat\bSigma_{\bx\bx}\btheta^*}.$$
	
	\begin{lem}
		\label{lem1}
  Uniform convergence of cross covariance.
\begin{itemize}
\item [(a)] {\bf Sub-Gaussian design}. Consider the following conditions:
  		\begin{itemize} \itemsep -0.05in
  			\item [(C1)] $\{\bx_i\}_{i=1}^N$ are i.i.d. sub-Gaussian vectors with $\|\bx_i\|_{\psi_2}\le \kappa_0< \infty$, $\E \bx_i=\bzero$ and $\lambda_{\min}(\E \bx_1\bx_1^T) \ge \kappa_{\cL}>0$;
  			\item [(C2)] $\forall i=1,...,N$, $\E |Y_i|^{2k} \le M< \infty$ for some $k>1$.
  		\end{itemize}
		Choose $\tau \asymp \sqrt{N/ \log d}$. For any $\delta>0$, there exists a constant $\gamma_1>0$ such that as long as $\log d/N< \gamma_1$, we have
		\begin{equation}
			P\Big( \supnorm{\widehat\bSigma_{Y\bx}(\tau)-\widehat\bSigma_{\bx\bx}\btheta^*}\ge \nu_1\sqrt{\frac{\delta\log d}{N}}\Big)\le 2d^{1-\delta},
		\end{equation}
		where $\nu_1$ is a universal constant.

\item [(b)] {\bf (Bounded moment design)}
		Consider instead the following set of conditions:
		\begin{itemize} \itemsep -0.05in
			\item [(C1')] $\lonenorm{\btheta^*} \le R<\infty$;
			\item [(C2')] $\E |x_{ij_1}x_{ij_2}|^{2}\le M<\infty$, $1 \le j_1, j_2 \le d$;
			\item [(C3')] $\forall i=1,..., N$, $\E |Y_i|^4\le M< \infty$.
		\end{itemize}
		Choose $\tau_1,\tau_2 \asymp (N/\log d)^{\frac{1}{4}}$. For any $\delta>0$, it holds that 
		\[
			P\Bigl(\|\widehat\bSigma_{Y \bx}(\tau_1, \tau_2) - \widehat\bSigma_{\bx \bx}(\tau_2)\btheta^*\|_{\max}>\nu_2\sqrt{\frac{\delta\log d}{N}} \Bigr)\le 2d^{1-\delta},
		\]
		where $\nu_2$ is a universal constant.
\end{itemize}
 	\end{lem}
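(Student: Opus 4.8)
The overall plan is to first reduce to a coordinatewise statement, using $\opnorm{\widehat\bSigma_{Y\bx} - \mat(\widehat\bSigma_{\bX\bX}\vec(\bTheta^*))} = \supnorm{\widehat\bSigma_{Y\bx} - \widehat\bSigma_{\bx\bx}\btheta^*}$, and then on each coordinate to split the error into centered sums (handled by Bernstein's inequality) and a deterministic truncation bias. The starting point is that the target is mean zero: $\E(Y_i\bx_i) = \E(\bx_i\bx_i^T)\btheta^*$ since $\E(\epsilon_i\mid\bx_i)=0$, so $\E(x_{ij}\bx_i^T\btheta^*) = \E(x_{ij}Y_i)$. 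In case (a) I would also record that $\ltwonorm{\btheta^*}$ is automatically bounded: from $\E\bx_i=\bzero$ and (C1)--(C2), $\kappa_{\cL}\ltwonorm{\btheta^*}^2 \le \Var(\bx_i^T\btheta^*) \le \Var(Y_i) \le \E Y_i^2 \le M^{1/k}$, so all constants below may depend on $\kappa_0,\kappa_{\cL},M,k$ but not on $d$ or $N$; in case (b) the hypothesis $\lonenorm{\btheta^*}\le R$ plays the same role.

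For case (a), the $j$th entry of $\widehat\bSigma_{Y\bx}(\tau)-\widehat\bSigma_{\bx\bx}\btheta^*$ decomposes as
\[
\underbrace{\tfrac1N\sum_i\!\bigl(\widetilde Y_i x_{ij}-\E\widetilde Y_i x_{ij}\bigr)}_{\mathrm I_j}
\;-\;\underbrace{\tfrac1N\sum_i\!\bigl(x_{ij}\bx_i^T\btheta^*-\E\,x_{ij}\bx_i^T\btheta^*\bigr)}_{\mathrm{II}_j}
\;+\;\underbrace{\E\bigl(x_{ij}(\widetilde Y_i-Y_i)\bigr)}_{\mathrm{III}_j},
\]
where $\mathrm{III}_j$ uses the identity above. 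I would bound $\mathrm I_j$ and $\mathrm{II}_j$ by Bernstein's inequality: $x_{ij}\widetilde Y_i$ has sub-exponential norm $\lesssim\kappa_0\tau$ (bounded $\widetilde Y_i$ times sub-Gaussian $x_{ij}$) and variance $\le\E(x_{ij}^2Y_i^2)\lesssim1$ by H\"older with exponent $k$ on $Y_i^2$, while $x_{ij}(\bx_i^T\btheta^*)$ is a product of two sub-Gaussians, hence sub-exponential with norm $\le\kappa_0^2\ltwonorm{\btheta^*}\lesssim1$. The key point is that for $\tau\asymp\sqrt{N/\log d}$ and at the target scale $t\asymp\sqrt{\log d/N}$, the ``linear'' term $t/(\kappa_0\tau)$ in the Bernstein exponent is of the same order as the ``quadratic'' term $t^2/v$, so the exponent is $\gtrsim\delta\log d$ and each of $\mathrm I_j,\mathrm{II}_j$ contributes at most $d^{-c\delta}$ to the tail; a union bound over $j\in\{1,\dots,d\}$ then controls $\max_j(|\mathrm I_j|+|\mathrm{II}_j|)$.

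The delicate term is the bias $\mathrm{III}_j=\E\bigl(x_{ij}(\widetilde Y_i-Y_i)\bigr)$, and this is where I expect the real obstacle. Cauchy--Schwarz here would be wasteful (it needs the $4k$th moment of $Y_i$); instead I would use $|\widetilde Y_i-Y_i|=(|Y_i|-\tau)_+\le|Y_i|\,\mathbf 1\{|Y_i|>\tau\}$ and apply H\"older's inequality with exponents $(a,2k,c)$ -- with $a$ finite but large, so $c$ is slightly above $2k/(2k-1)$ -- matched to the available moments, $\E|Y_i|^{2k}\le M$ and $P(|Y_i|>\tau)\le M\tau^{-2k}$, yielding $|\mathrm{III}_j|\lesssim\tau^{-(2k-1)+\eta}$ with $\eta>0$ arbitrarily small. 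Since $k>1$ this is $o(\tau^{-1})=o\bigl(\sqrt{\log d/N}\bigr)$, and the hypothesis $\log d/N<\gamma_1$ (with $\gamma_1$ allowed to depend on $\delta$) makes it smaller than a third of $\nu_1\sqrt{\delta\log d/N}$; combining with the previous paragraph and enlarging $\nu_1$ gives (a).

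For case (b) the structure is the same but with two truncations: the $j$th entry is $\tfrac1N\sum_i\widetilde x_{ij}(\widetilde Y_i-\widetilde\bx_i^T\btheta^*)$, which I would split into the centered averages of $\widetilde x_{ij}\widetilde Y_i$ and of $\widetilde x_{ij}\widetilde x_{il}$ (the latter weighted by $\theta_l^*$ and passed through $\max_{j,l}$ via $\lonenorm{\btheta^*}\le R$) plus the biases $\E(\widetilde x_{ij}\widetilde Y_i-x_{ij}Y_i)$ and $\E(\widetilde x_{ij}\widetilde x_{il}-x_{ij}x_{il})$. The stochastic parts go through Bernstein exactly as above: the summands are bounded by $\tau_1\tau_2\asymp\tau_2^2\asymp\sqrt{N/\log d}$, their variances by $M$ via (C2')--(C3') and Cauchy--Schwarz, so again the boundedness term is a constant at scale $t\asymp\sqrt{\log d/N}$, and a union bound over the $\le d^2$ pairs $(j,l)$ finishes. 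For the biases I would use the sharper cubic dominations $(|x_{ij}|-\tau_2)_+\le|x_{ij}|^3\tau_2^{-2}$ and $(|Y_i|-\tau_1)_+\le|Y_i|^3\tau_1^{-2}$ together with H\"older and the fourth moments in (C2')--(C3'), after writing $\widetilde x_{ij}\widetilde Y_i-x_{ij}Y_i=(\widetilde x_{ij}-x_{ij})\widetilde Y_i+x_{ij}(\widetilde Y_i-Y_i)$ and similarly for $\widetilde x_{ij}\widetilde x_{il}-x_{ij}x_{il}$; this gives each bias $O(\tau_1^{-2})+O(\tau_2^{-2})=O\bigl(\sqrt{\log d/N}\bigr)$, with the $\widetilde x\widetilde x$ bias summed against $\btheta^*$ through $\lonenorm{\btheta^*}\le R$. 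Collecting terms and absorbing constants into $\nu_2$ completes (b). In both parts the main obstacle is keeping the truncation bias at the $\sqrt{\log d/N}$ level under only $2k$th- (resp.\ fourth-) moment assumptions: this is exactly what forces the tuned H\"older exponents and the cubic (rather than quadratic) bound on the truncation remainder, and it is what pins down the prescribed orders of $\tau$, $\tau_1$, $\tau_2$.
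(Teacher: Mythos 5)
Your proposal is correct and follows essentially the same route as the paper: an entrywise decomposition into Bernstein-controlled centered sums plus a deterministic truncation bias, with a union bound over the $d$ coordinates in part (a) and the $d^2$ coordinate pairs in part (b), and the same use of $\ltwonorm{\btheta^*}\lesssim M^{1/k}/\kappa_{\cL}$ (resp.\ $\lonenorm{\btheta^*}\le R$) to control the design-dependent terms. The only real divergence is in the bias term of part (a), where you dismiss Cauchy--Schwarz as requiring a $4k$-th moment of $Y_i$; in fact the paper's bound $\E\bigl(|Y_i x_{ij}|\mathbf{1}_{\{|Y_i|>\tau\}}\bigr)\le\sqrt{\E(Y_i^2x_{ij}^2)\,P(|Y_i|>\tau)}$ combined with $P(|Y_i|>\tau)\le \E Y_i^2/\tau^2$ uses only the moments you already have and yields $O(1/\tau)=O(\sqrt{\log d/N})$ directly, so your finer three-exponent H\"older estimate is valid but unnecessary.
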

	
	\begin{rem}
		If we choose $\widehat\bSigma_{Y\bx}$ and $\widehat\bSigma_{\bx\bx}$ to be the sample covariance, i.e., $\widehat\bSigma_{Y\bx}= \overline\bSigma_{Y\bx}= \frac{1}{N}\sum\nolimits_{i=1}^N Y_i\bx_i$ and $\widehat\bSigma_{\bx\bx}= \overline\bSigma_{\bx\bx}= \frac{1}{N}\sum\nolimits_{i=1}^N \bx_i\bx_i^T$, Corollary 2 of \cite{NRW12} showed that under the sub-Gaussian noise and design,
		\[
			\supnorm{\overline\bSigma_{Y\bx}- \overline\bSigma_{\bx\bx}\btheta^*}=O_P\Bigl(\sqrt{\frac{\log d}{N}} \Bigr).
		\]
		This is the same rate as what we achieved under only the bounded moment conditions on response and design.
	\end{rem}
	
	Next we establish the restricted strong convexity of the proposed robust $\ell_2$ loss.
	\begin{lem}
		\label{lem:2}
Restricted strong convexity.
\begin{itemize}
\item [(a)] {\bf Sub-Gaussian design}. Under Condition (C1) of Lemma \ref{lem1}, it holds for certain constants $C_1$, $C_2$ and any $\eta_1>1$ that 		
		\beq
			\label{eq:3.3}
			P\Bigl(\bv^T\widehat \bSigma_{\bx\bx}\bv\ge \frac{1}{2}\bv^T\bSigma_{\bx\bx}\bv-\frac{C_1 \eta_1\log d}{N}\lonenorm{\bv}^2\,,  \;\; \forall \bv \in \RR^d\Bigr)\ge 1-\frac{d^{1-\eta_1}}{3}- 2d\exp(-C_2N).
		\eeq

\item [(b)] {\bf Bounded moment design}. If $\bx_i$ satisfies Condition (C2') of Lemma \ref{lem1}, then it holds for some constant $C_3>0$ and any $\eta_2>2$ that
		\beq
			\label{eq:3.4}
			P\Bigl(\bv^T\widehat\bSigma_{\bx \bx}(\tau_2)\bv \ge \bv^T\bSigma_{\bx\bx}\bv - C_3\eta_2\sqrt{\frac{\log d}{N}}\lonenorm{\bv}^2\,, \;\; \forall \bv\in \RR^d \Bigr) \le d^{2-\eta_2},
		\eeq
		as long as $\tau_2\asymp (N/\log d)^{\frac{1}{4}}$.
\end{itemize}
	\end{lem}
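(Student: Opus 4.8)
For part~(a), the matrix $\what\bSigma_{\bx\bx}=N^{-1}\sum_{i=1}^N\bx_i\bx_i^T$ is the ordinary sample covariance of an i.i.d.\ sub-Gaussian design, so \eqref{eq:3.3} is a restricted strong convexity / restricted eigenvalue statement of the kind already established for sub-Gaussian designs (see \cite{RZh13} and the versions used in \cite{NWa11,NWa12}); the plan is to reproduce such an argument while tracking constants to reach the stated probability. Concretely: (i) by the scaling $\bv\mapsto\bv/\sqrt{\bv^T\bSigma_{\bx\bx}\bv}$ it suffices to certify $\bv^T\what\bSigma_{\bx\bx}\bv\ge\tfrac12$ whenever $\bv^T\bSigma_{\bx\bx}\bv=1$ and $\lonenorm{\bv}$ lies below a threshold, because once $C_1\eta_1(\log d/N)\lonenorm{\bv}^2\ge\tfrac12$ the inequality \eqref{eq:3.3} holds trivially; (ii) for a fixed $\ell_1$-radius $r$, the set $\{\bv:\bv^T\bSigma_{\bx\bx}\bv=1,\ \lonenorm{\bv}\le r\}$ is contained in $\{\ltwonorm{\bv}\le\kappa_{\cL}^{-1/2},\ \lonenorm{\bv}\le r\}$, and on it $\sup_{\bv}\bigl|\bv^T(\what\bSigma_{\bx\bx}-\bSigma_{\bx\bx})\bv\bigr|$ can be bounded by $c\,\kappa_0^2\sqrt{(\log d)/N}\,r$ up to lower-order terms, using that $\langle\bx_i,\bv\rangle$ is sub-Gaussian with $\psi_2$-norm at most $\kappa_0\ltwonorm{\bv}$ (so $\langle\bx_i,\bv\rangle^2$ is sub-exponential), a covering-number bound over approximately-sparse vectors, and Bernstein's inequality; (iii) a peeling argument over dyadic values of $r$ turns this into the uniform-in-$\bv$ bound, the geometric sum over peeling levels contributing the $\tfrac13 d^{1-\eta_1}$ term and the concentration of the $d$ diagonal entries $\what\bSigma_{jj}$ around $\bSigma_{jj}$ contributing the $2d\exp(-C_2N)$ term.

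Part~(b) is the genuinely new ingredient, since $\what\bSigma_{\bx\bx}(\tau_2)=N^{-1}\sum_{i=1}^N\wt\bx_i\wt\bx_i^T$ uses the coordinatewise-truncated design. The plan is to reduce \eqref{eq:3.4} to an entrywise estimate via Hölder's inequality: for every $\bv\in\RR^d$,
\[
\bv^T\what\bSigma_{\bx\bx}(\tau_2)\bv\ \ge\ \bv^T\bSigma_{\bx\bx}\bv-\supnorm{\what\bSigma_{\bx\bx}(\tau_2)-\bSigma_{\bx\bx}}\,\lonenorm{\bv}^2 ,
\]
so it suffices to show $\supnorm{\what\bSigma_{\bx\bx}(\tau_2)-\bSigma_{\bx\bx}}\le C_3\eta_2\sqrt{(\log d)/N}$ off an event of probability at most $d^{2-\eta_2}$. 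For a fixed pair $(j_1,j_2)$ I split the $(j_1,j_2)$ entry of $\what\bSigma_{\bx\bx}(\tau_2)-\bSigma_{\bx\bx}$ as
\[
\frac1N\sum_{i=1}^N\bigl(\wt x_{ij_1}\wt x_{ij_2}-\E\wt x_{ij_1}\wt x_{ij_2}\bigr)\ +\ \bigl(\E\wt x_{ij_1}\wt x_{ij_2}-\E x_{ij_1}x_{ij_2}\bigr),
\]
a mean-zero fluctuation plus a deterministic bias. Since $\wt x_{ij_1}\wt x_{ij_2}$ agrees with $x_{ij_1}x_{ij_2}$ off $\{|x_{ij_1}|\vee|x_{ij_2}|>\tau_2\}$ and $|\wt x_{ij}|\le|x_{ij}|$, the bias is at most $2\,\E\bigl[|x_{ij_1}x_{ij_2}|\,\mathbbm{1}\{|x_{ij_1}|\vee|x_{ij_2}|>\tau_2\}\bigr]\le 2\tau_2^{-2}\,\E\bigl[|x_{ij_1}x_{ij_2}|(x_{ij_1}^2+x_{ij_2}^2)\bigr]$, and each resulting term, e.g.\ $\E|x_{ij_1}|^3|x_{ij_2}|\le(\E x_{ij_1}^4)^{1/2}(\E x_{ij_1}^2x_{ij_2}^2)^{1/2}\le M$ by (C2$'$), is $O(M)$; hence the bias is $O(M/\tau_2^2)=O(\sqrt{(\log d)/N})$ under the choice $\tau_2\asymp(N/\log d)^{1/4}$. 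For the fluctuation, the i.i.d.\ summands are bounded by $2\tau_2^2$ in absolute value and have variance at most $\E x_{ij_1}^2x_{ij_2}^2\le M$, so Bernstein's inequality with $t\asymp\eta_2\sqrt{(\log d)/N}$ makes the ``range'' term $\tau_2^2 t\asymp\eta_2$ of constant order and gives a tail $2\exp(-c\,\eta_2\log d)=2d^{-c\eta_2}$; choosing $C_3$ large (depending on $M$) and taking a union bound over the $d^2$ pairs yields the claim.

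The main obstacle is in part~(b): squeezing the bias down to order $\tau_2^{-2}\asymp\sqrt{(\log d)/N}$ instead of the naive $\tau_2^{-1}$. This is exactly where the fourth-moment condition (C2$'$), rather than mere finiteness of $\E x_{ij}^2$, is essential, through the weighted Cauchy--Schwarz bound on $\E|x_{ij_1}|^3|x_{ij_2}|$; it is also the reason part~(b) settles for the slower $\sqrt{(\log d)/N}$ rate (with the full $\bv^T\bSigma_{\bx\bx}\bv$ retained on the right) while the sub-Gaussian part~(a) achieves $\log d/N$ at the cost of relaxing $\bv^T\bSigma_{\bx\bx}\bv$ to $\tfrac12\bv^T\bSigma_{\bx\bx}\bv$. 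In part~(a) the only real subtlety is the peeling/discretization bookkeeping needed for the sharp rate and the stated probability, and this is essentially available from the existing restricted-eigenvalue literature for sub-Gaussian designs.
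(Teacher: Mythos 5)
Your proposal is correct, and part (b) is essentially the paper's own argument: the same split of each entry of $\widehat\bSigma_{\bx\bx}(\tau_2)-\bSigma_{\bx\bx}$ into a mean-zero fluctuation (handled by Bernstein with range $\tau_2^2$ and variance $M$) plus a deterministic truncation bias of order $M/\tau_2^2\asymp\sqrt{(\log d)/N}$ controlled via the fourth-moment condition (C2$'$), followed by H\"older's inequality $|\bv^T\bA\bv|\le\supnorm{\bA}\lonenorm{\bv}^2$ and a union bound over $d^2$ pairs; your weighted-Markov bound $\E[|x_{ij_1}x_{ij_2}|\mathbbm{1}\{\cdot\}]\le\tau_2^{-2}\E[|x_{ij_1}x_{ij_2}|(x_{ij_1}^2+x_{ij_2}^2)]$ is a minor variant of the paper's Cauchy--Schwarz-plus-Markov step and is equally valid. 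For part (a), however, you take a genuinely different route. The paper does not run a covering/peeling argument: it invokes Lemma 5.2 of \cite{Oli16}, a lower-tail restricted-eigenvalue inequality that directly yields $\bv^T\widehat\bSigma_{\bx\bx}\bv\ge\tfrac12\bv^T\bSigma_{\bx\bx}\bv-\frac{c_1(1+2\eta_1\log d)}{N}\lonenorm{\widehat\bD_{\bx\bx}^{1/2}\bv}^2$ for all $\bv$ with probability $1-d^{1-\eta_1}/3$ (this is where that term actually originates, needing only a fourth-to-second moment comparison, verified here via sub-Gaussianity and $\lambda_{\min}(\bSigma_{\bx\bx})\ge\kappa_{\cL}$), and then spends the $2d\exp(-C_2N)$ budget on bounding the diagonal, $\opnorm{\widehat\bD_{\bx\bx}}\le 3\kappa_0^2$, so that $\lonenorm{\widehat\bD_{\bx\bx}^{1/2}\bv}^2\lesssim\lonenorm{\bv}^2$ --- so your attribution of the two failure-probability terms to peeling levels and diagonal concentration does not match the paper's accounting, though it is harmless. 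Your rebuild-from-scratch plan (reduce to $\lonenorm{\bv}\le r_0\asymp\sqrt{N/(\eta_1\log d)}$ on the sphere $\bv^T\bSigma_{\bx\bx}\bv=1$, bound the supremum of the quadratic deviation by covering plus sub-exponential Bernstein, then peel) is the standard Rudelson--Zhou/Loh--Wainwright route and would deliver the inequality with probability at least $1-c\exp(-c'N)$, which in the regime $\log d/N<\gamma_1$ is at least as strong as the stated bound; it is more self-contained but considerably longer than the paper's one-citation proof, and the only thing it buys here is independence from Oliveira's lemma.
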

	
	\begin{rem}
		Comparing the results we get for sub-Gaussian design and heavy-tailed design, we can find that the coefficients before $\|\bv\|_1^2$ are different. Under the sub-Gaussian design, that coefficient is of the order $\log d/N$, while under the heavy-tailed design, the coefficient is of order $\sqrt{\log d/N}$. This difference will lead to different scaling requirements for $N, d$ and $\rho$ in the sequel. As we shall see, the heavy-tailed design requires stronger scaling conditions to retain the same statistical error rate as the sub-Gaussian design for the linear model.
	\end{rem}
	
	Finally we derive the statistical error rate of $\widehat\btheta$ as defined in \eqref{eq:3.1}.
	
\begin{thm}
		\label{thm:lm}
		Assume $\sum\limits_{i=1}^d |\theta^*_i|^q\le \rho$, where $0\le q\le 1$.
\begin{itemize}
\item [(a)]{\bf Sub-Gaussian design}: Suppose Conditions (C1) and (C2) in Lemma \ref{lem1} hold. For any $\delta>0$, choose $\tau\asymp \sqrt{N/\log d}$ and $\lambda_N = 2\nu_1 \sqrt{\delta\log d/N}$, where $\nu_1$ and $\delta$ are the same as in part (a) of Lemma \ref{lem1}. There exist positive constants $\{C_i\}_{i=1}^3$ such that as long as $\rho (\log d/N)^{1-\frac{q}{2}} \le C_1$, it holds that
		\[
			P\Big(\ltwonorm{\widehat\btheta(\tau, \lambda_N)-\btheta^*}^2 >C_2\rho\Bigl(\frac{\delta\log d}{N}\Bigr)^{1-\frac{q}{2}}\Big) \le 3d^{1-\delta}
		\]		
		and
		\[
			P\Big(\lonenorm{\widehat\btheta(\tau, \lambda_N)-\btheta^*} >C_3\rho\Bigl(\frac{\delta\log d}{N}\Bigr)^{\frac{1-q}{2}}\Big) \le 3d^{1-\delta}.
		\]
\item [(b)]{\bf Bounded moment design}: For any $\delta>0$ choose $\tau_1, \tau_2\asymp (N/\log d)^{\frac{1}{4}}$ and $\lambda_N=2\nu_2 \sqrt{\delta\log d/N}$, where $\nu_2$ and $\delta$ are the same as in part (b) of Lemma \ref{lem1}. Under Conditions (C1'), (C2') and (C3'), there exist constants $\{C_i\}_{i=4}^6$ such that as long as $\rho(\log d/N)^{\frac{1-q}{2}} \le C_4$, we have
		\[
			P\Big(\ltwonorm{\widehat\btheta(\tau_1, \tau_2, \lambda_N)- \btheta^*}^2> C_5\rho\Bigl(\frac{\delta\log d}{N}\Bigr)^{1-\frac{q}{2}} \Big)\le 3d^{1-\delta}
		\]
		and
		\[	
			P\Big(\lonenorm{\widehat\btheta(\tau_1, \tau_2, \lambda_N)- \btheta^*} > C_6\rho\Bigl(\frac{\delta\log d}{N}\Bigr)^{\frac{1-q}{2}} \Big)\le 3d^{1-\delta}.
		\]
\end{itemize}	
		
\end{thm}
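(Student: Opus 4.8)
The strategy is to invoke the deterministic master bound in Theorem~\ref{thm:1} and then verify its two hypotheses hold with high probability in each design regime. Recall Theorem~\ref{thm:1} delivers $\fnorm{\widehat\bDelta}^2\le C_1\rho(\lambda_N/\kappa_{\cL})^{2-q}$ and $\nnorm{\widehat\bDelta}\le C_2\rho(\lambda_N/\kappa_{\cL})^{1-q}$ on the event where (i) $\lambda_N\ge 2\opnorm{\widehat\bSigma_{Y\bX}-\mat(\widehat\bSigma_{\bX\bX}\vec(\bTheta^*))}$ and (ii) $\vec(\widehat\bDelta)^T\widehat\bSigma_{\bX\bX}\vec(\widehat\bDelta)\ge\kappa_{\cL}\fnorm{\widehat\bDelta}^2$. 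In the linear-model specialization, $\opnorm{\cdot}=\supnorm{\cdot}$ on the relevant difference (as noted just before Lemma~\ref{lem1}), $\fnorm{\cdot}=\ltwonorm{\cdot}$, $\nnorm{\cdot}=\lonenorm{\cdot}$, and $\widehat\bSigma_{\bX\bX}=\widehat\bSigma_{\bx\bx}$. So both bounds reduce to the stated $\ell_2$ and $\ell_1$ estimates once (i) and (ii) are established, with $\lambda_N$ of order $\sqrt{\delta\log d/N}$ in both regimes.

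\textbf{Step 1: Control the deviation term (hypothesis (i)).} In part (a), Lemma~\ref{lem1}(a) gives $\supnorm{\widehat\bSigma_{Y\bx}(\tau)-\widehat\bSigma_{\bx\bx}\btheta^*}\le\nu_1\sqrt{\delta\log d/N}$ with probability at least $1-2d^{1-\delta}$, provided $\log d/N<\gamma_1$; choosing $\lambda_N=2\nu_1\sqrt{\delta\log d/N}$ makes (i) hold on this event. In part (b), Lemma~\ref{lem1}(b) gives the same rate for $\|\widehat\bSigma_{Y\bx}(\tau_1,\tau_2)-\widehat\bSigma_{\bx\bx}(\tau_2)\btheta^*\|_{\max}$ with probability at least $1-2d^{1-\delta}$, so $\lambda_N=2\nu_2\sqrt{\delta\log d/N}$ works.

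\textbf{Step 2: Convert restricted strong convexity into hypothesis (ii).} Lemma~\ref{lem:2} controls $\bv^T\widehat\bSigma_{\bx\bx}\bv$ from below by $c\,\bv^T\bSigma_{\bx\bx}\bv$ minus a term $\kappa\lonenorm{\bv}^2$, where $\kappa\asymp\log d/N$ in case (a) and $\kappa\asymp\sqrt{\log d/N}$ in case (b). The standard step (as in \cite{NRW12}) is that the optimality of $\widehat\bTheta$ together with the choice of $\lambda_N\ge2\supnorm{\cdot}$ forces $\widehat\bDelta$ into a cone where $\lonenorm{\widehat\bDelta}$ is controlled by $\ltwonorm{\widehat\bDelta}$ and $\rho$: roughly $\lonenorm{\widehat\bDelta}\lesssim\sqrt{s}\,\ltwonorm{\widehat\bDelta}+\rho\lambda_N^{-q}\cdot(\text{lower-order})$, where effectively $s\asymp\rho\lambda_N^{-q}$ plays the role of a soft sparsity. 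Feeding this into the RSC bound and using $\lambda_{\min}(\bSigma_{\bx\bx})\ge\kappa_{\cL}$ (Condition (C1)), one gets $\vec(\widehat\bDelta)^T\widehat\bSigma_{\bx\bx}\vec(\widehat\bDelta)\ge\frac{\kappa_{\cL}}{4}\ltwonorm{\widehat\bDelta}^2$ as soon as $\kappa\cdot\rho\lambda_N^{-q}$ is small, i.e. $\kappa\rho(\log d/N)^{-q/2}\le c$. In case (a) this reads $\rho(\log d/N)^{1-q/2}\le C_1$; in case (b), since $\kappa\asymp\sqrt{\log d/N}$, it reads $\rho(\log d/N)^{(1-q)/2}\le C_4$ — exactly the two scaling conditions in the theorem. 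Then apply Theorem~\ref{thm:1} with this reduced $\kappa_{\cL}/4$, absorb constants, and take a union bound over the events of Lemmas~\ref{lem1} and~\ref{lem:2} (the failure probabilities $2d^{1-\delta}$, $d^{1-\eta_1}/3+2d e^{-C_2N}$ etc. combine to at most $3d^{1-\delta}$ after adjusting constants, noting $2de^{-C_2N}$ is negligible when $\log d/N<\gamma_1$).

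\textbf{Main obstacle.} The delicate part is Step~2 — specifically, running the cone/peeling argument for the \emph{approximately} low-rank (here, weakly sparse) case $0<q\le1$ rather than the exact-sparsity case, and checking that the RSC slack $\kappa\lonenorm{\bv}^2$ with the \emph{weaker} $\sqrt{\log d/N}$ rate in the bounded-moment regime can still be dominated. One must split $\btheta^*$ into its large and small coordinates at a threshold tuned to $\lambda_N$, bound the tail contribution by $\rho\lambda_N^{1-q}$ and the head cardinality by $\rho\lambda_N^{-q}$, and verify the algebra closes so that the residual from $\kappa\lonenorm{\widehat\bDelta}^2$ is at most $\frac{1}{2}$ of the curvature term. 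This is where the two distinct scaling conditions come from, and it is the step requiring care rather than the invocation of the black-box lemmas. Everything else — plugging $\lambda_N\asymp\sqrt{\delta\log d/N}$ into Theorem~\ref{thm:1}'s conclusion and simplifying $(\lambda_N/\kappa_{\cL})^{2-q}$ to $(\delta\log d/N)^{1-q/2}$ and $(\lambda_N/\kappa_{\cL})^{1-q}$ to $(\delta\log d/N)^{(1-q)/2}$ — is routine bookkeeping.
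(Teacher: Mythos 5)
Your proposal is correct and follows essentially the same route as the paper: invoke Theorem \ref{thm:1}, verify hypothesis (i) via Lemma \ref{lem1}, and verify hypothesis (ii) by feeding the cone inequality $\lonenorm{\widehat\bDelta}\lesssim\sqrt{\rho}\,\lambda_N^{-q/2}\ltwonorm{\widehat\bDelta}+\rho\lambda_N^{1-q}$ into the RSC bound of Lemma \ref{lem:2}, which is exactly how the two scaling conditions $\rho(\log d/N)^{1-q/2}\le C_1$ and $\rho(\log d/N)^{(1-q)/2}\le C_4$ arise. The only detail you leave implicit — how the additive tail term $\rho\lambda_N^{1-q}$ is absorbed — is handled in the paper by splitting into the cases $\ltwonorm{\widehat\bDelta}\gtrless c_1\sqrt{\rho}(\lambda_N/\kappa_{\cL})^{1-q/2}$ (the bound is trivial in the small case), which is precisely the bookkeeping you flag as the step requiring care.
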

	
	\begin{rem}
		Under both sub-Gaussian and heavy-tailed design, our proposed $\widehat\btheta$ achieves the minimax optimal rate of $\ell_2$ norm established by \cite{RWY11}. However, the difference lies in the scaling requirement on $N$, $d$ and $\rho$. For sub-Gaussian
design, we require $\rho (\log d/N)^{1-\frac{q}{2}} \le C_1$, whereas for heavy-tailed design we need $\rho (\log d/N)^{\frac{1-q}{2}} \le C_4$.
Under the high-dimensional regime that $d\gg N\gg \log d$, the former is weaker. Therefore, heavy-tailed design requires stronger scaling than sub-Gaussian design to achieve the optimal statistical rate.
	\end{rem}
	

	\subsection{Matrix Compressed Sensing}
	
	For the matrix compressed sensing problem, since the design is chosen by users, we consider solely the most popular design: the Gaussian design. We thus keep the original design matrix and only truncate the response. In \eqref{eq:2.7}, choose $\widetilde Y_i= sgn(Y_i)(|Y_i| \wedge \tau)$ and $\widetilde\bX_i= \bX_i$, then we have
\beq \label{eq3.5}
  \widehat\bSigma_{Y\bX}= \widehat\bSigma_{\widetilde Y\widetilde \bX}(\tau)= \frac{1}{N}\sum\limits_{i=1}^N sgn(Y_i)(|Y_i| \wedge \tau)\bX_i
  \quad \mbox{and} \quad
  \widehat\bSigma_{\bX\bX}= \frac{1}{N}\sum\limits_{i=1}^N \vec(\bX_i)\vec(\bX_i)^T.
\eeq
The following lemma quantifies the convergence rate of $\opnorm{\widehat\bSigma_{Y\bX} - \mat(\widehat\bSigma_{\bX\bX}\vec(\bTheta^*))}$. Note that here $\widehat\bSigma_{Y\bX} - \mat(\widehat\bSigma_{\bX\bX}\vec(\bTheta^*))=\widehat\bSigma_{ Y \bX}(\tau) - \frac{1}{N}\sum\limits_{i=1}^N \inn{\bX_i, \bTheta^*}\bX_i $.
	
	\begin{lem}
		\label{lem:3}
		Consider the following conditions:
  		\begin{itemize} \itemsep -0.05in
  			\item [(C1)] $\{\vec(\bX_i)\}_{i=1}^N$ are i.i.d. sub-Gaussian vectors with $\|\vec(\bX_i)\|_{\psi_2}\le \kappa_0< \infty$, $\E \bX_i=\bzero$ and $\lambda_{\min}(\E \vec(\bX_i)\vec(\bX_i)^T) \ge \kappa_{\cL}>0$.
  			\item [(C2)] $\forall i=1,...,N$, $\E |Y_i|^{2k} \le M< \infty$ for some $k>1$.		
  		\end{itemize}
		There exists a constant $\gamma>0$ such that as long as $(d_1+d_2)/N< \gamma$, it holds that 
		\begin{equation}
			P\Big(\opnorm{\widehat\bSigma_{ Y \bX}(\tau) - \frac{1}{N}\sum\limits_{i=1}^N \inn{\bX_i, \bTheta^*}\bX_i} \ge \nu\sqrt{\frac{d_1+d_2}{N}}\Big)\le \eta\exp(-(d_1+d_2)),
		\end{equation}
		where $\tau \asymp \sqrt{N/(d_1+d_2)}$ and $\nu$ and $\eta$ are constants.
	\end{lem}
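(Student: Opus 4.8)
The plan is to write the matrix of interest as $\bW:=\widehat\bSigma_{Y\bX}(\tau)-\frac1N\sum_{i=1}^N\inn{\bX_i,\bTheta^*}\bX_i=\frac1N\sum_{i=1}^N\xi_i\bX_i$, with effective (truncated) noise $\xi_i:=\widetilde Y_i-\inn{\bX_i,\bTheta^*}$, and to split it into a deterministic bias and a mean-zero fluctuation, $\bW=\E\bW+(\bW-\E\bW)$. Because $Y_i=\inn{\bX_i,\bTheta^*}+\epsilon_i$ and $\E(\epsilon_i\mid\bX_i)=0$ give $\E[\epsilon_i\bX_i]=\bzero$, the bias equals $\E\bW=\E[(\widetilde Y_1-Y_1)\bX_1]$, i.e., it only picks up the truncated tail of $Y_1$. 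I will bound $\opnorm{\bW}\le\opnorm{\E\bW}+\opnorm{\bW-\E\bW}$ and show each term is $O(\sqrt{(d_1+d_2)/N})$ on an event of probability $\ge1-\eta\exp(-(d_1+d_2))$.

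For the bias, for any unit vectors $u\in\mathcal{S}^{d_1-1}$, $v\in\mathcal{S}^{d_2-1}$,
\[
\bigl|u^T(\E\bW)v\bigr|\le\E\bigl[(|Y_1|-\tau)_+\,|u^T\bX_1v|\bigr]\le\bigl(\E(|Y_1|-\tau)_+^2\bigr)^{1/2}\bigl(\E(u^T\bX_1v)^2\bigr)^{1/2}.
\]
Bounding $(|Y_1|-\tau)_+^2\le|Y_1|^2\mathbbm{1}\{|Y_1|>\tau\}\le\tau^{2-2k}|Y_1|^{2k}$ and invoking (C2) gives $\E(|Y_1|-\tau)_+^2\le M\tau^{2-2k}$, while $u^T\bX_1v=\vec(\bX_1)^T(v\otimes u)$ is sub-Gaussian with norm at most $\kappa_0$, so $\E(u^T\bX_1v)^2\lesssim\kappa_0^2$. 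Taking the supremum over $u,v$ (which is exactly $\opnorm{\E\bW}$) and plugging in $\tau\asymp\sqrt{N/(d_1+d_2)}$ yields $\opnorm{\E\bW}\lesssim\sqrt{M}\,\kappa_0\bigl((d_1+d_2)/N\bigr)^{(k-1)/2}$, which is $\lesssim\sqrt{(d_1+d_2)/N}$ since $k>1$ and $(d_1+d_2)/N<\gamma<1$.

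For the fluctuation term I will run a covering argument: fix fixed-radius nets $\mathcal{N}_1,\mathcal{N}_2$ of $\mathcal{S}^{d_1-1},\mathcal{S}^{d_2-1}$ with $|\mathcal{N}_1|\,|\mathcal{N}_2|\le 9^{d_1+d_2}$, so that $\opnorm{\bW-\E\bW}$ is at most a fixed constant multiple of $\max_{(u,v)\in\mathcal{N}_1\times\mathcal{N}_2}|u^T(\bW-\E\bW)v|$. For a fixed $(u,v)$ set $a_i:=u^T\bX_iv$ (sub-Gaussian, $\|a_i\|_{\psi_2}\le\kappa_0$) and split $\xi_ia_i=\widetilde Y_ia_i-\inn{\bX_i,\bTheta^*}a_i$. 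The term $\inn{\bX_i,\bTheta^*}a_i$ is a product of two sub-Gaussians, hence sub-exponential with $\psi_1$-norm $\lesssim\kappa_0^2\fnorm{\bTheta^*}$, and its centered average is dispatched by the standard sub-exponential Bernstein inequality with $O(1)$ parameters. The delicate term is $\widetilde Y_ia_i$: although $|\widetilde Y_i|\le\tau$ makes it bounded, $\tau\asymp\sqrt{N/(d_1+d_2)}$ diverges, so its Orlicz norm is far too large for a crude sub-Gaussian tail bound to survive the $9^{d_1+d_2}$ union bound. Instead I will verify the Bernstein moment condition directly: for $p\ge2$,
\[
\E|\widetilde Y_1a_1|^p\le\tau^{p-2}\,\E\bigl[\widetilde Y_1^2|a_1|^p\bigr]\le\tau^{p-2}\bigl(\E\widetilde Y_1^{2k}\bigr)^{1/k}\bigl(\E|a_1|^{pk/(k-1)}\bigr)^{(k-1)/k}\lesssim p!\,\sigma_0^2\,b^{p-2},
\]
with variance proxy $\sigma_0^2\asymp M^{1/k}\kappa_0^2=O(1)$ — the crucial inputs being $\E\widetilde Y_1^2\le\E Y_1^2\le M^{1/k}$ and $\E\widetilde Y_1^{2k}\le\E|Y_1|^{2k}\le M$ — and envelope scale $b\asymp\tau\kappa_0$. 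Applying Bernstein's inequality to $\frac1N\sum_i(\widetilde Y_ia_i-\E[\widetilde Y_1a_1])$ at level $t=\nu_0\sqrt{(d_1+d_2)/N}$, one checks that with $\tau\asymp\sqrt{N/(d_1+d_2)}$ the denominator $\sigma_0^2+bt$ stays of order $\nu_0$ while the numerator $Nt^2/2$ is of order $\nu_0^2(d_1+d_2)$, so the exponent is of order $\nu_0(d_1+d_2)$; the same conclusion holds a fortiori for the sub-exponential piece. Thus each $(u,v)$ in the net fails with probability $\le C\exp(-c\nu_0(d_1+d_2))$, and choosing $\nu_0$ large enough that $c\nu_0>1+\log 9$ absorbs both the union bound over $9^{d_1+d_2}$ points and the net-to-supremum constant, leaving total failure probability $\le\eta\exp(-(d_1+d_2))$. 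Adding the bias bound gives $\opnorm{\bW}\le\nu\sqrt{(d_1+d_2)/N}$ on the same event, for a constant $\nu$ depending on $\kappa_0$, $M$, and $\fnorm{\bTheta^*}$.

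The main obstacle is precisely this last point. After truncation $\widetilde Y_i$ has an Orlicz norm that grows with the sample size (at rate $\tau$), so $\widetilde Y_iu^T\bX_iv$ is neither sub-Gaussian nor sub-exponential with an $O(1)$ parameter, and any tail bound built from that parameter is destroyed by the exponential-in-$(d_1+d_2)$ union bound. The resolution is a Bernstein inequality that decouples the genuinely $O(1)$ variance of $\widetilde Y_iu^T\bX_iv$ from its $O(\tau)$ envelope, and it is exactly this separation that pins down the scaling $\tau\asymp\sqrt{N/(d_1+d_2)}$: a larger $\tau$ would break the fluctuation bound, a smaller one would break the bias bound. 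The hypothesis $(d_1+d_2)/N<\gamma$ serves to keep the product $bt$ bounded throughout the Bernstein step (and to keep the truncated-moment estimates in their valid regime).
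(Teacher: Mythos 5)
Your overall decomposition into a deterministic bias plus a centered fluctuation, the $1/4$-net covering argument, the Bernstein bound for $\widetilde Y_i\,u^T\bX_iv$ via its $O(1)$ variance proxy and $O(\tau)$ envelope, and the sub-exponential Bernstein bound for $\inn{\bX_i,\bTheta^*}u^T\bX_iv$ all match the paper's proof in substance, and that part of the argument is sound. The genuine problem is your bias bound. You split $\E\bigl[(|Y_1|-\tau)_+\,|u^T\bX_1v|\bigr]$ by Cauchy--Schwarz into $\bigl(\E(|Y_1|-\tau)_+^2\bigr)^{1/2}\bigl(\E(u^T\bX_1v)^2\bigr)^{1/2}$ and bound the first factor by $\sqrt{M}\,\tau^{1-k}$. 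With $\tau\asymp\sqrt{N/(d_1+d_2)}$ this is $\bigl((d_1+d_2)/N\bigr)^{(k-1)/2}$, and your closing claim that this is $\lesssim\sqrt{(d_1+d_2)/N}$ ``since $k>1$'' is false for $1<k<2$: when the base $(d_1+d_2)/N$ is below $1$ and the exponent $(k-1)/2$ is below $1/2$, the power is \emph{larger} than the square root; equivalently $\tau^{1-k}/\tau^{-1}=\tau^{2-k}$ is unbounded for $k<2$. Since (C2) only assumes some $k>1$, your bias estimate establishes the lemma only for $k\ge 2$.

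The repair is to group the Cauchy--Schwarz factors the other way, as the paper does. Writing $Z=u^T\bX_1v$ and bounding the integrand by $|Y_1Z|\ind_{\{|Y_1|>\tau\}}$,
\[
\E\bigl[|Y_1Z|\ind_{\{|Y_1|>\tau\}}\bigr]\le\sqrt{\E[Y_1^2Z^2]}\;\sqrt{P(|Y_1|>\tau)}\le\sqrt{\E[Y_1^2Z^2]}\;\frac{\sqrt{\E Y_1^2}}{\tau},
\]
where $\E[Y_1^2Z^2]\le\bigl(\E|Y_1|^{2k}\bigr)^{1/k}\bigl(\E|Z|^{2k/(k-1)}\bigr)^{(k-1)/k}=O(1)$ by H\"older, (C2), and the sub-Gaussianity of $Z$. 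This charges only one power of $\tau$ to the tail event and gives the required $O(1/\tau)=O\bigl(\sqrt{(d_1+d_2)/N}\bigr)$ bias uniformly over the sphere for every $k>1$; the role of $k>1$ is precisely to control the fourth-moment-type quantity $\E[Y^2Z^2]$, while the tail probability itself needs only the second moment of $Y$.
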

	
	\begin{rem}
	For the sample covariance $\overline\bSigma_{Y\bX}=\frac{1}{N} \sum\nolimits_{i=1}^N Y_i\bX_i$,  \cite{NWa11} showed that when the noise and design are sub-Gaussian,
		\[
			 \opnorm{\overline\bSigma_{Y\bX} - \frac{1}{N}\sum\nolimits_{i=1}^N \inn{\bX_i, \bTheta^*}\bX_i}=\opnorm{\frac{1}{N}\sum\nolimits_{i=1}^N \epsilon_i\bX_i} = O_P(\sqrt{(d_1+d_2)/N}).
		\]	
		Lemma \ref{lem:3} shows that $\widehat\bSigma_{Y\bX}(\tau)$ achieves the same rate for response with just bounded moments.
	\end{rem}
	
	The following theorem gives the statistical error rate of $\widehat\bTheta$ in \eqref{eq:2.6}.
	
	\begin{thm}
		\label{thm:cs}
		Suppose Conditions (C1) and (C2) in Lemma \ref{lem:3} hold and $\cB_q(\bTheta^*)\le \rho$. We further assume that $\vec(\bX_i)$ is Gaussian. Choose $\tau \asymp \sqrt{N/(d_1+d_2)}$ and $\lambda_N=2\nu\sqrt{(d_1+d_2)/N}$, where $\nu$ is the same as in Lemma \ref{lem:3}. There exist constants $\{C_i\}_{i=1}^4$ such that once $\rho \bigl((d_1+d_2)/N\bigr)^{1-\frac{q}{2}} \le C_1$, we have
		\[
			P \Big(\fnorm{\widehat\bTheta(\tau, \lambda_N)-\bTheta^*}^2 \ge C_2\rho\Bigl(\frac{d_1+d_2}{N}\Bigr)^{1-\frac{q}{2}} \Big) \le \eta\exp(-(d_1+d_2))+2\exp(-N/32)\,,
		\]		
		and
		\[
			P \Big(\nnorm{\widehat\bTheta(\tau, \lambda_N)-\bTheta^*} \ge C_3\rho\Bigl(\frac{d_1+d_2}{N}\Bigr)^{\frac{1-q}{2}} \Big) \le \eta\exp(-(d_1+d_2))+2\exp(-N/32)\,,
		\]
		where $\eta$ is the same constant as in Lemma \ref{lem:3}.
	\end{thm}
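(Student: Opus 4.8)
The plan is to obtain Theorem \ref{thm:cs} as a corollary of the deterministic Theorem \ref{thm:1}, so the entire task reduces to verifying, on a high-probability event, its two hypotheses for the choices $\widehat\bSigma_{Y\bX}=\widehat\bSigma_{\widetilde Y\widetilde\bX}(\tau)$ and $\widehat\bSigma_{\bX\bX}=\frac1N\sum_i\vec(\bX_i)\vec(\bX_i)^T$ in \eqref{eq3.5}: namely (i) $\lambda_N\ge 2\opnorm{\widehat\bSigma_{Y\bX}-\mat(\widehat\bSigma_{\bX\bX}\vec(\bTheta^*))}$, and (ii) the restricted eigenvalue inequality $\vec(\widehat\bDelta)^T\widehat\bSigma_{\bX\bX}\vec(\widehat\bDelta)\ge\kappa_{\cL}\fnorm{\widehat\bDelta}^2$ for a constant $\kappa_{\cL}$. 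Once both hold, Theorem \ref{thm:1} gives $\fnorm{\widehat\bDelta}^2\le C_1\rho(\lambda_N/\kappa_{\cL})^{2-q}$ and $\nnorm{\widehat\bDelta}\le C_2\rho(\lambda_N/\kappa_{\cL})^{1-q}$, and plugging in $\lambda_N=2\nu\sqrt{(d_1+d_2)/N}$ together with the fixed $\kappa_{\cL}$ produces the stated rates after renaming constants.

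Hypothesis (i) is handled directly by Lemma \ref{lem:3}. Since $\mat(\widehat\bSigma_{\bX\bX}\vec(\bTheta^*))=\frac1N\sum_i\inn{\bX_i,\bTheta^*}\bX_i$ (as noted just before that lemma), Lemma \ref{lem:3} shows that, provided $(d_1+d_2)/N<\gamma$, with probability at least $1-\eta\exp(-(d_1+d_2))$ one has $\opnorm{\widehat\bSigma_{Y\bX}(\tau)-\mat(\widehat\bSigma_{\bX\bX}\vec(\bTheta^*))}\le\nu\sqrt{(d_1+d_2)/N}$ for $\tau\asymp\sqrt{N/(d_1+d_2)}$. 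Hence the choice $\lambda_N=2\nu\sqrt{(d_1+d_2)/N}$ meets (i) on this event, which I call $\cE_1$.

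For hypothesis (ii) I would argue in two stages. First, using only the optimality of $\widehat\bTheta$ in \eqref{eq:2.6}, convexity of the quadratic loss (so $\widehat\bSigma_{\bX\bX}\succeq 0$ suffices), and the inequality $\lambda_N\ge 2\opnorm{\nabla\cL(\bTheta^*)}$ supplied by $\cE_1$, one derives the standard nuclear-norm decomposability cone constraint on $\widehat\bDelta$; combining it with a rank-$r$ truncation of the near-low-rank $\bTheta^*$ at level $\asymp\lambda_N$ (so $r\lesssim\rho\lambda_N^{-q}$ and the discarded nuclear mass is $\lesssim\rho\lambda_N^{1-q}$) yields $\nnorm{\widehat\bDelta}^2\lesssim\rho\lambda_N^{-q}\fnorm{\widehat\bDelta}^2+\rho^2\lambda_N^{2(1-q)}$. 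Second, I would invoke the restricted eigenvalue property of the Gaussian ensemble: since $\inn{\bX_i,\bDelta}\sim N(0,\fnorm{\bDelta}^2)$, a Gaussian comparison inequality plus a peeling argument over the ratio $\nnorm{\bDelta}/\fnorm{\bDelta}$ gives, uniformly in $\bDelta$,
\[
	\vec(\bDelta)^T\widehat\bSigma_{\bX\bX}\vec(\bDelta)=\frac1N\sum_{i=1}^N\inn{\bX_i,\bDelta}^2\ge\tfrac14\fnorm{\bDelta}^2-c_1\frac{d_1+d_2}{N}\nnorm{\bDelta}^2
\]
with probability at least $1-2\exp(-N/32)$ — this is the standard bound of \cite{NWa11} and is precisely the origin of the $2\exp(-N/32)$ term; call this event $\cE_2$. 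Evaluating at $\bDelta=\widehat\bDelta$, inserting the cone bound on $\nnorm{\widehat\bDelta}^2$, and using the scaling condition $\rho((d_1+d_2)/N)^{1-q/2}\le C_1$ to force the tolerance contributions $c_1\frac{d_1+d_2}{N}\rho\lambda_N^{-q}$ and $c_1\frac{d_1+d_2}{N}\rho^2\lambda_N^{2(1-q)}$ to be dominated, one secures (ii) with $\kappa_{\cL}$ equal to a fixed constant such as $1/8$.

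On $\cE_1\cap\cE_2$, which has probability at least $1-\eta\exp(-(d_1+d_2))-2\exp(-N/32)$, both hypotheses of Theorem \ref{thm:1} hold and the two displayed bounds follow; the Gaussianity of $\vec(\bX_i)$ is used only in the second stage, where it delivers the sharp $(d_1+d_2)/N$ tolerance with the clean $2\exp(-N/32)$ probability. The main obstacle is exactly that second stage: proving the uniform restricted eigenvalue bound for the Gaussian design, and then carrying out the near-low-rank bookkeeping — picking the truncation level, bounding $\nnorm{\widehat\bDelta}$ through the cone, and checking that the stated scaling condition is precisely what is needed to absorb the tolerance term into a fixed $\kappa_{\cL}$. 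Everything else is a direct appeal to Lemma \ref{lem:3} and Theorem \ref{thm:1}.
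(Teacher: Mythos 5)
Your proposal is correct and follows essentially the same route as the paper: hypothesis (i) of Theorem \ref{thm:1} comes from Lemma \ref{lem:3}, and hypothesis (ii) from the Gaussian restricted strong convexity of \cite{NWa11} (Proposition 1 there, the source of the $2\exp(-N/32)$ term) combined with the cone/truncation bookkeeping and the scaling condition $\rho((d_1+d_2)/N)^{1-q/2}\le C_1$. The only cosmetic difference is that the paper absorbs the additive $\rho^2\lambda_N^{2(1-q)}$ slack in the cone bound by splitting into the cases $\fnorm{\widehat\bDelta}\gtrless c_1\sqrt{\rho}(\lambda_N/\kappa_{\cL})^{1-q/2}$ (the RE inequality of Theorem \ref{thm:1} is only verified in the large case, the small case being trivial), a step you would need to make explicit rather than claiming (ii) holds outright with a fixed $\kappa_{\cL}$.
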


	\begin{rem}	
		The Frobenius norm rate here is the same rate as established under sub-Gaussian noise in \cite{NWa11}.  When $q=0$, $\rho$ is the upper bound of the rank of $\bTheta^*$ and the rate of convergence depends only on $\rho (d_1+d_2) \asymp \rho (d_1 \vee d_2)$, the effective number of independent parameters in $\bTheta^*$, rather than the ambiem number of parameters $d_1*d_2$
	\end{rem}
			
	\subsection{Matrix Completion}
	
	In this section, we consider the matrix completion problem with heavy-tailed noises. Under a conventional setting, $\bX_i$ is a singleton, $\|\bTheta^*\|_{\max}=O(1)$ and $\fnorm{\bTheta^*}=O(\sqrt{d_1d_2})$. If we rescale the original model as
	\[
		Y_i=\inn{\bX_i, \bTheta^*}+\epsilon_i= \inn{\sqrt{d_1d_2}\bX_i, \bTheta^*/\sqrt{d_1d_2}}+\epsilon_i
	\]
	and treat $\sqrt{d_1d_2}\bX_i$ as the new design $\check\bX_i$ and $\bTheta^*/\sqrt{d_1d_2}$ as the new coefficient matrix $\check\bTheta^*$, then $\fnorm{\check\bX_i}=O(\sqrt{d_1d_2})$ and $\fnorm{\check\bTheta^*}=O(1)$. Therefore, by rescaling, we can assume without loss of generality that $\bTheta^*$ satisfies $\fnorm{\bTheta^*}\le 1$ and $\bX_i$ is uniformly sampled from $\{\sqrt{d_1d_2}\cdot \be_j\be_k^T\}_{1\le j\le d_1, 1\le k\le d_2}$.
	
	 For the matrix completion problem, in order to achieve consistent estimation, we require the true coefficient matrix $\bTheta^*$ not to be overly spiky, i.e., $\|\bTheta^*\|_{\max}\le R\fnorm{\bTheta^*}/\sqrt{d_1d_2}\le R/\sqrt{d_1d_2}$. We put a similar constraint in seeking the corresponding M-estimator:
	\beq
		\label{eq:3.6}
		\widehat\bTheta \in \argmin_{ \|{\bTheta}\|_{\max}\le R/\sqrt{d_1d_2}} -\inn{\widehat\bSigma_{Y\bX}(\tau), \bTheta}+\frac{1}{2}\vec(\bTheta)^T\widehat\bSigma_{\bX\bX}\vec(\bTheta)+\lambda_N\nnorm{\bTheta}.
	\eeq	
	This spikiness condition is proposed by \cite{NWa12} and it is required by the matrix completion problem per se instead of our robust estimation.
	
	To derive robust estimation in matrix completion problem, we choose $\widetilde Y_i= sgn(Y_i)(|Y_i| \wedge \tau)$ and $\widetilde\bX_i= \bX_i$ in \eqref{eq:2.7}. Then, $\widehat\bSigma_{Y\bX}$ and $\widehat\bSigma_{\bX\bX}$ are given by \eqref{eq3.5}. Note that the design $\bX_i$ here takes the singleton form, which leads to different scaling and consistency rates from the setting of matrix compressed sensing.


	\begin{lem}
		\label{lem:4}
		Under the following conditions:
		\begin{itemize} \itemsep -0.05in
			\item [(C1)] $\|\bTheta^*\|_{F}\le 1$ and $\|\bTheta^*\|_{\max}\le R/\sqrt{d_1d_2}$, where $0<R<\infty$;
			\item [(C2)] $\bX_i$ is uniformly sampled from $\{\sqrt{d_1d_2}\cdot \be_j\be_k^T\}_{1\le j\le d_1, 1\le k\le d_2}$;
			\item [(C3)] $\forall i=1,...,N$, $\E (\E (\epsilon_i^2| \bX_i))^k \le M< \infty$, where $k>1$;	
		\end{itemize}
		there exists a constant $\gamma>0$ such that for any $\delta>0$, as long as $(d_1\vee d_2)\log(d_1+d_2)/N <\gamma$,
		\beq
			P\Big( \opnorm{\widehat\bSigma_{ Y\bX}(\tau) - \frac{1}{N}\sum\limits_{i=1}^N \inn{\bX_i, \bTheta^*}\bX_i}>\nu\sqrt{\frac{\delta(d_1\vee d_2)\log (d_1+d_2)}{N}}\Big)\le 2(d_1+d_2)^{1-\delta},
		\eeq
		where $\tau\asymp(\sqrt{N/{((d_1\vee d_2)\log (d_1+d_2))}})$ and $\nu$ is a universal constant.
	\end{lem}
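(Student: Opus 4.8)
The plan is to decompose the quantity $\widehat\bSigma_{Y\bX}(\tau) - \frac{1}{N}\sum_{i=1}^N \inn{\bX_i, \bTheta^*}\bX_i$ into a ``bias'' term coming from truncating $Y_i$ and a mean-zero ``stochastic'' term, then bound each in operator norm separately. Writing $\widetilde Y_i = \truncate{Y_i}$, I would add and subtract $\E[\widetilde Y_i \bX_i \mid \bX_i]$ inside the average and further add and subtract $\inn{\bX_i,\bTheta^*}\bX_i = \E[Y_i\bX_i\mid \bX_i]$. This gives
\[
 \widehat\bSigma_{Y\bX}(\tau) - \frac{1}{N}\sum_{i=1}^N \inn{\bX_i, \bTheta^*}\bX_i
 = \underbrace{\frac1N\sum_{i=1}^N\bigl(\widetilde Y_i\bX_i - \E[\widetilde Y_i\bX_i\mid\bX_i]\bigr)}_{\text{(I): stochastic}}
 + \underbrace{\frac1N\sum_{i=1}^N\bigl(\E[\widetilde Y_i\bX_i\mid\bX_i]-\inn{\bX_i,\bTheta^*}\bX_i\bigr)}_{\text{(II): bias}}.
\]
For the bias term (II), note $\abs{\E[\widetilde Y_i - Y_i\mid\bX_i]} \le \E[\abs{Y_i}\ind\{\abs{Y_i}>\tau\}\mid\bX_i] \le \tau^{-1}\E[Y_i^2\mid\bX_i]$, and since $Y_i = \inn{\bX_i,\bTheta^*}+\epsilon_i$ with $\inn{\bX_i,\bTheta^*}$ bounded by $\supnorm{\bTheta^*}\cdot\sqrt{d_1d_2} \le R$ (using (C1) and the singleton scaling in (C2)) and $\E[\epsilon_i^2\mid\bX_i]$ controlled by (C3), one gets $\E[Y_i^2\mid\bX_i] = O(1)$ uniformly. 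Then $\opnorm{(\mathrm{II})} \le \frac1N\sum_i \tau^{-1}\E[Y_i^2\mid\bX_i]\cdot\opnorm{\bX_i}$, and since $\opnorm{\bX_i}=\sqrt{d_1d_2}$ for a singleton, this is $O(\sqrt{d_1d_2}/\tau)$. With the prescribed $\tau\asymp\sqrt{N/((d_1\vee d_2)\log(d_1+d_2))}$ and $d_1d_2 \le (d_1\vee d_2)^2$, one checks $\sqrt{d_1d_2}/\tau \lesssim \sqrt{(d_1\vee d_2)\log(d_1+d_2)/N}$ up to the $(d_1\wedge d_2)$-versus-$\log(d_1+d_2)$ bookkeeping, which is exactly the target rate; this step is routine but requires care to confirm the dimension arithmetic closes.

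For the stochastic term (I), I would apply a matrix Bernstein inequality (e.g.\ the rectangular/non-commutative Bernstein bound of Tropp) to the sum of independent, mean-zero random matrices $\bM_i := \widetilde Y_i\bX_i - \E[\widetilde Y_i\bX_i\mid\bX_i]$. Two quantities are needed: a uniform bound $\opnorm{\bM_i} \le 2\tau\opnorm{\bX_i} = 2\tau\sqrt{d_1d_2}$ (since $\abs{\widetilde Y_i}\le\tau$), and the matrix variance proxy $\max\{\opnorm{\sum_i\E[\bM_i\bM_i^T]},\opnorm{\sum_i\E[\bM_i^T\bM_i]}\}$. For the variance, $\E[\bM_i\bM_i^T] \preceq \E[\widetilde Y_i^2\bX_i\bX_i^T] \le \E[Y_i^2\bX_i\bX_i^T]$; using that $\bX_i\bX_i^T = d_1d_2\,\be_{j(i)}\be_{j(i)}^T$ for a uniformly sampled singleton, averaging over $(j,k)$ gives $\E[\bX_i\bX_i^T] = d_1d_2\cdot\frac{1}{d_1}\bI_{d_1}\cdot$ (with the $\E[Y_i^2\mid\bX_i]=O(1)$ weight absorbed), so $\opnorm{\sum_i\E[\bM_i\bM_i^T]} = O(N d_2)$ and similarly the other side is $O(N d_1)$, whence the variance proxy is $O(N(d_1\vee d_2))$. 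Matrix Bernstein then yields, for a deviation level $t$,
\[
 P\bigl(\opnorm{(\mathrm{I})}\ge t\bigr) \le (d_1+d_2)\exp\!\left(-c\,\frac{N^2 t^2}{N(d_1\vee d_2) + N t\tau\sqrt{d_1d_2}}\right);
\]
choosing $t = \nu\sqrt{\delta(d_1\vee d_2)\log(d_1+d_2)/N}$ makes the first term in the denominator dominant precisely when $(d_1\vee d_2)\log(d_1+d_2)/N$ is below a constant $\gamma$ (this is where the scaling hypothesis enters), giving the exponent $\asymp -c\delta\log(d_1+d_2)$ and hence a bound of order $(d_1+d_2)^{1-c\delta}$; rescaling $\nu$ and $\delta$ turns this into the stated $2(d_1+d_2)^{1-\delta}$.

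The main obstacle I anticipate is the variance computation for the singleton design: unlike the compressed-sensing case where $\vec(\bX_i)$ is sub-Gaussian and isotropic-like, here $\bX_i\bX_i^T$ is a rank-one spike of magnitude $d_1d_2$, so the naive bound $\opnorm{\E[\bM_i\bM_i^T]} \le \E\opnorm{\bM_i}^2 = O(\tau^2 d_1d_2)$ is far too lossy and would produce a rate of order $\sqrt{d_1d_2\log(d_1+d_2)/N}$ rather than $\sqrt{(d_1\vee d_2)\log(d_1+d_2)/N}$. The fix — and the crux of the argument — is to exploit that summing over the $N$ independent uniform singletons \emph{averages out} one of the two dimensions: $\sum_i\E[\bX_i\bX_i^T]$ has operator norm only $O(Nd_2)$, not $O(N d_1 d_2)$, because the $d_1d_2$ scaling is cancelled by the $1/(d_1d_2)$ sampling probability of each entry, leaving a $d_2$-fold multiplicity in each row block. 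Getting this accounting exactly right, and likewise checking that the $t\tau\sqrt{d_1d_2}$ term in the Bernstein denominator is genuinely subdominant under the hypothesized scaling (which forces the specific choice $\tau\asymp\sqrt{N/((d_1\vee d_2)\log(d_1+d_2))}$ rather than the $\tau\asymp\sqrt{N/(d_1+d_2)}$ used for compressed sensing), is the part that needs the most care.
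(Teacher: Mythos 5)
Your stochastic term (I) is handled essentially as in the paper: matrix Bernstein applied to the mean-zero matrices $\widetilde Y_i\bX_i-\E[\widetilde Y_i\bX_i\mid\bX_i]$, with uniform bound $2\tau\sqrt{d_1d_2}$ and the crucial observation that the variance proxy is only $O(N(d_1\vee d_2))$ because the $1/(d_1d_2)$ sampling probability cancels the $d_1d_2$ magnitude of $\bX_i\bX_i^T$, leaving a $d_2$-fold (resp.\ $d_1$-fold) row sum. That part is right and is the same accounting the paper performs.

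The genuine gap is in your bias term (II). You bound $\opnorm{(\mathrm{II})}\le \frac1N\sum_i\tau^{-1}\E[Y_i^2\mid\bX_i]\,\opnorm{\bX_i}=O(\sqrt{d_1d_2}/\tau)$ by termwise triangle inequality, and then claim this closes to the target rate. It does not: with $\tau\asymp\sqrt{N/((d_1\vee d_2)\log(d_1+d_2))}$ one has $\sqrt{d_1d_2}/\tau=\sqrt{d_1d_2}\cdot\sqrt{(d_1\vee d_2)\log(d_1+d_2)/N}$, which exceeds the target by the polynomial factor $\sqrt{d_1d_2}$, not by ``$(d_1\wedge d_2)$-versus-$\log$ bookkeeping.'' This is exactly the lossiness you correctly diagnosed for the variance term but then committed for the bias: the termwise bound ignores that the singleton is uniformly sampled. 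The paper avoids this by bounding the \emph{unconditional} bias $\opnorm{\E[(\widetilde Y-Y)\bX]}$: for unit $\bu,\bv$, writing $\bu^T\bX_i\bv=\sqrt{d_1d_2}\,u_{j(i)}v_{k(i)}$ and averaging over the uniform index pair turns the prefactor into $(d_1d_2)^{-1/2}\sum_{j_0,k_0}|u_{j_0}v_{k_0}|=(d_1d_2)^{-1/2}\lonenorm{\bu}\lonenorm{\bv}\le 1$, giving a bias of $O(1/\tau)$ — a full $\sqrt{d_1d_2}$ better. To repair your two-term decomposition you would need to split (II) further into its mean $\E[(\widetilde Y-Y)\bX]$ (bounded as above) plus a fluctuation $\frac1N\sum_i(\bB_i-\E\bB_i)$ requiring its own concentration argument; alternatively, adopt the paper's three-term decomposition, whose third piece $\frac1N\sum_i\inn{\bX_i,\bTheta^*}\bX_i-\E[Y\bX]$ is itself controlled by a second application of matrix Bernstein.
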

	
	\begin{rem}
		Again, for $\overline\bSigma_{Y\bX}= \frac{1}{N}\sum\nolimits_{i=1}^N Y_i\bX_i$, \cite{NWa12} proved that $\opnorm{\overline\bSigma_{Y\bX} - \frac{1}{N}\sum\nolimits_{i=1}^N \inn{\bX_i, \bTheta^*}\bX_i}=O_P(\sqrt{(d_1+d_2)\log (d_1+d_2)/N})$ for sub-exponential noise. Compared with this result, Lemma \ref{lem:4} achieves the same rate of convergence.  By Jessen's inequality, condition (C3) is implied by $E \epsilon_i^{2k} \leq M < \infty$.
	\end{rem}

%
	
	Now we present the following theorem on the statistical error of $\widehat\bTheta$ defined in \eqref{eq:3.6}.
	
	\begin{thm}
		\label{thm:mc}
		Suppose that the conditions of Lemma \ref{lem:4} hold. Consider $\cB_q(\bTheta^*)\le \rho$ with $\|\bTheta^*\|_{\max}/\fnorm{\bTheta^*} \le R/\sqrt{d_1d_2}$. For any $\delta>0$, choose $$\tau\asymp\sqrt{N/{((d_1\vee d_2)\log (d_1+d_2))}})\quad \text{and}\quad \lambda_N=2\nu\sqrt{\delta(d_1\vee d_2)\log (d_1+d_2)/N}$$ and assume $(d_1\vee d_2)\log(d_1+d_2)/N <\gamma$, where $\nu$ and $\gamma$ are the same as in Lemma \ref{lem:4}. There exist universal constants $\{C_i\}_{i=1}^4$ such that with probability at least $1-2(d_1+d_2)^{1-\delta}- C_1\exp(-C_2(d_1+d_2))$ we have
		\[
			\fnorm{\widehat\bTheta(\tau, \lambda_N)-\bTheta^*}^2 \le C_3\max\Bigl\{\rho\Bigl(\frac{\delta R^2(d_1+d_2)\log(d_1+d_2)}{N}\Bigr)^{1-\frac{q}{2}}, \frac{R^2}{N}\Bigr\}
		\]
		and
		\[
			\nnorm{\widehat\bTheta(\tau, \lambda_N)-\bTheta^*} \le C_4\max\Bigl\{\rho\Bigl(\frac{\delta R^2(d_1+d_2)\log(d_1+d_2)}{N}\Bigr)^{\frac{1-q}{2}}, \bigl(\frac{\rho R^{2-2q}}{N^{1-q}}\bigr)^{\frac{1}{2-q}}\Bigr\},
		\]
		where $\widehat\bTheta$ is defined in \eqref{eq:3.6}.
	\end{thm}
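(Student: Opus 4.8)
The plan is to rerun the argument behind Theorem~\ref{thm:1}, but with its restricted strong convexity (RSC) hypothesis replaced by a \emph{tolerant} version, since for the singleton design $\widehat\bSigma_{\bX\bX}$ is a diagonal matrix that is not uniformly invertible and only enjoys approximate curvature on a restricted set. After the rescaling preceding \eqref{eq:3.6}, both $\widehat\bTheta$ and $\bTheta^*$ lie in the feasible set $\{\|\bTheta\|_{\max}\le R/\sqrt{d_1d_2}\}$, so $\widehat\bDelta:=\widehat\bTheta-\bTheta^*$ obeys $\|\widehat\bDelta\|_{\max}\le 2R/\sqrt{d_1d_2}$, whence $\fnorm{\widehat\bDelta}\le 2R$ and $\nnorm{\widehat\bDelta}\le 2R\sqrt{d_1\wedge d_2}$. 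First I would fix $\lambda_N$ by Lemma~\ref{lem:4}: on an event $\cE_1$ with $\bbP(\cE_1)\ge 1-2(d_1+d_2)^{1-\delta}$ one has $\opnorm{\widehat\bSigma_{Y\bX}(\tau)-\mat(\widehat\bSigma_{\bX\bX}\vec(\bTheta^*))}=\opnorm{\widehat\bSigma_{Y\bX}(\tau)-\frac1N\sum_i\inn{\bX_i,\bTheta^*}\bX_i}\le \nu\sqrt{\delta(d_1\vee d_2)\log(d_1+d_2)/N}=\lambda_N/2$, so the stated $\lambda_N$ is admissible, and $(d_1\vee d_2)\asymp(d_1+d_2)$.

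The crux is the RSC of $\tfrac{1}{2N}\sum_i\inn{\bX_i,\bDelta}^2=\tfrac12\vec(\bDelta)^T\widehat\bSigma_{\bX\bX}\vec(\bDelta)$. Writing $\tfrac1N\sum_i\inn{\bX_i,\bDelta}^2=\fnorm{\bDelta}^2+\tfrac{d_1d_2}{N}\sum_i\big(\Delta_{j(i)k(i)}^2-\fnorm{\bDelta}^2/(d_1d_2)\big)$, the centered summands are bounded in modulus by $\|\bDelta\|_{\max}^2$ with variance at most $\|\bDelta\|_{\max}^2\fnorm{\bDelta}^2/(d_1d_2)$; a Bernstein bound for fixed $\bDelta$ plus a peeling/covering argument over matrices with $\|\bDelta\|_{\max}\le 2R/\sqrt{d_1d_2}$ (precisely the device of \cite{NWa12}, as re-used in \cite{NRW12}) yields, on an event $\cE_2$ with $\bbP(\cE_2)\ge 1-C_1\exp(-C_2(d_1+d_2))$, a bound of the shape
\[
  \vec(\bDelta)^T\widehat\bSigma_{\bX\bX}\vec(\bDelta)\ \ge\ \tfrac12\fnorm{\bDelta}^2-c_1\,\sqrt{d_1d_2}\,\|\bDelta\|_{\max}\sqrt{\tfrac{(d_1+d_2)\log(d_1+d_2)}{N}}\,\nnorm{\bDelta}-c_2\big(\sqrt{d_1d_2}\,\|\bDelta\|_{\max}\big)^2\tfrac{\log(d_1+d_2)}{N}
\]
for all such $\bDelta$. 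Using $\sqrt{d_1d_2}\,\|\widehat\bDelta\|_{\max}\le 2R$, this amounts to an effective restricted eigenvalue of order $1/R$ along the nuclear-norm cone plus an additive tolerance of order $R^2/N$ (up to the $\log$), and the latter is what surfaces as the floor in the conclusion. I expect this to be the main obstacle: establishing RSC for the degenerate singleton second-moment operator and carefully tracking the dependence on the spikiness bound $R$.

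Given the tolerant RSC, the rest mirrors the proof of Theorem~\ref{thm:1}. Optimality of $\widehat\bTheta$ together with $\lambda_N\ge 2\opnorm{\cdots}$ confine $\widehat\bDelta$ to a cone: for the rank-$r$ subspace $\cM$ spanned by the leading singular vectors of $\bTheta^*$, $\nnorm{\widehat\bDelta_{\cM^\perp}}\le 3\nnorm{\widehat\bDelta_{\cM}}+4\sum_{i>r}\sigma_i(\bTheta^*)$, hence $\nnorm{\widehat\bDelta}\lesssim\sqrt r\,\fnorm{\widehat\bDelta}+\sum_{i>r}\sigma_i(\bTheta^*)$. Feeding this, the tolerant RSC, and $\lambda_N\ge 2\opnorm{\cdots}$ into the basic inequality $\cL(\widehat\bTheta)+\lambda_N\nnorm{\widehat\bTheta}\le\cL(\bTheta^*)+\lambda_N\nnorm{\bTheta^*}$ (expanded through $\cL$), the linear-in-$\nnorm{\widehat\bDelta}$ slack---whose coefficient is $\lesssim R\sqrt{(d_1+d_2)\log(d_1+d_2)/N}\asymp R\lambda_N$---is absorbed, leaving a quadratic inequality in $\fnorm{\widehat\bDelta}$ whose solution, after optimizing $r$ against $\cB_q(\bTheta^*)\le\rho$ (so $r\asymp\rho(R\lambda_N)^{-q}$ and $\sum_{i>r}\sigma_i(\bTheta^*)\lesssim\rho(R\lambda_N)^{1-q}$) exactly as in Theorem~\ref{thm:1}, is
\[
  \fnorm{\widehat\bDelta}^2\ \lesssim\ \rho\,(R\lambda_N)^{2-q}\ \vee\ \tfrac{R^2}{N}\ =\ \rho\Bigl(\tfrac{\delta R^2(d_1+d_2)\log(d_1+d_2)}{N}\Bigr)^{1-q/2}\ \vee\ \tfrac{R^2}{N}.
\]
Substituting this back into $\nnorm{\widehat\bDelta}\lesssim\sqrt r\,\fnorm{\widehat\bDelta}+\rho(R\lambda_N)^{1-q}$ and simplifying gives the stated nuclear-norm bound with its floor $\big(\rho R^{2-2q}/N^{1-q}\big)^{1/(2-q)}$. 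The hypothesis $(d_1\vee d_2)\log(d_1+d_2)/N<\gamma$ ensures the feasible radius $2R$ sits inside the RSC regime, so---unlike the linear model and compressed sensing---no smallness condition on $\rho$ is required; the overall probability is $\bbP(\cE_1\cap\cE_2)\ge 1-2(d_1+d_2)^{1-\delta}-C_1\exp(-C_2(d_1+d_2))$.
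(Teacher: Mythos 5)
Your proposal follows essentially the same route as the paper: both reduce the problem to the restricted-strong-convexity analysis of \cite{NWa12} for the singleton design (your additive-tolerance RSC is a repackaging of their two-case constraint-set/spikiness argument), fix $\lambda_N$ via Lemma \ref{lem:4}, and then rerun the Theorem \ref{thm:1} cone decomposition with the optimization over $r$ and the singular-value threshold to get both the Frobenius and nuclear-norm bounds with their floors. The only discrepancy is minor: your additive tolerance $c_2\bigl(\sqrt{d_1d_2}\,\|\bDelta\|_{\max}\bigr)^2\log(d_1+d_2)/N$ would produce a floor of order $R^2\log(d_1+d_2)/N$ rather than the stated $R^2/N$, whereas the paper obtains the cleaner floor directly from the \cite{NWa12} alternative ``either $\fnorm{\widehat\bDelta}\le 512R/\sqrt{N}$ or RSC holds with constant curvature.''
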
	

	\begin{rem}
		Theorem \ref{thm:mc} achieves the same statistical error rate of Frobenius norm as established in \cite{NWa12}, which also matches the information-theoretic lower bound established in \cite{NWa12}.
	\end{rem}
	
	\subsection{Multi-Task Regression}
	
	Before presenting the theoretical results, we first simplify \eqref{eq:2.6} under the setting of multi-task regression. According to \eqref{eq:2.8}, \eqref{eq:2.6} can be reduced to the following form:
	\beq
		\widehat\bTheta\in \argmin_{\bTheta\in \cS} \frac{1}{d_2} \bigl(-\inn{\widehat\bSigma_{\widetilde \bx\widetilde \by}, \bTheta} + \frac{1}{n}\sum\limits_{i=1}^n \ltwonorm{\bTheta^T\widetilde \bx_i}^2 \bigr) + \lambda_N\nnorm{\bTheta}.
	\eeq
	Recall here that $n$ is the sample size in terms of \eqref{eq:2.2} and $N=d_2n$. We also have $\widehat\bSigma_{Y\bX}- \mat(\widehat\bSigma_{\bX\bX}\vec(\bTheta^*))= \bigl( \widehat\bSigma_{\widetilde \bx\widetilde\by} - \widehat\bSigma_{\widetilde\bx \widetilde\bx}\bTheta^{*}\bigr)/d_2$.
	
	Under the sub-Gaussian design, we only need to shrink the response vector $\by_i$. We choose for \eqref{eq:2.8} $\widetilde \bx_i= \bx_i$ and $\widetilde\by_i = (\|\by_i\|_2 \wedge \tau)\by_i/ \|\by_i\|_2$, where $\tau$ is some threshold value that depends on $n, d_1$ and $d_2$. In other words, we keep the original design, but shrink the Euclidean norm of the response. Note that when $\by_i$ is one-dimensional, the shrinkage reduces to the truncation $y_i(\tau) = \mbox{sgn}(y_i) (|y_i|\wedge \tau)$.
When the design is only of bounded moments, we need to shrink both the design vector $\bx_i$ and response vector $\by_i$ by their $\ell_4$  norm instead, i.e., we choose $\widetilde \bx_i=(\|\bx_i\|_4 \wedge \tau_1)\bx_i/ \|\bx_i\|_4$ and $\widetilde \by_i=(\|\by_i\|_4 \wedge \tau_2)\by_i/ \|\by_i\|_4$, where $\tau_1$ and $\tau_2$ are two thresholds.  Here shrinking by the fourth order moment in fact accelerates the convergence rate of the induced bias so that it will match the final statistical error rate. Again, we will not repeat stating these choices in the following lemmas and theorems for less redundancy.

	
	\begin{lem}
		\label{lem:5}
Convergence of gradients of robustified quadratic loss.
\begin{itemize}
\item [(a)] {\bf Sub-Gaussian design}.
Under the following conditions:
  		\begin{itemize} \itemsep -0.05in
			\item [(C1)] $\lambda_{\max}(\E \by_i\by_i^T)\le R <\infty$;
  			\item [(C2)] $\{\bx_i\}_{i=1}^n$ are i.i.d. sub-Gaussian vectors with $\|\bx_i\|_{\psi_2}\le \kappa_0< \infty$, $\E \bx_i=\bzero$ and $\lambda_{\min}(\E \bx_i\bx_i^T)\ge \kappa_{\cL}>0.$
  			\item [(C3)] $\forall i=1,...,n$, $j_1,j_2=1,...,d_2$ and $j_1\neq j_2$, $\epsilon_{ij_1}\perp \epsilon_{ij_2} |\bx_i$,  and $\forall j=1,...,d_1$, $\E \left(\E(\epsilon_{ij}^2|\bx_i) \right)^k \le M< \infty$, where $k>1$;			
  		\end{itemize}
		there exists some constant $\gamma>0$ such that if $(d_1+d_2)\log(d_1+d_2)/n<\gamma$, we have for any $\delta>0$,
		\[
			P\Big(\opnorm{\widehat\bSigma_{\widetilde \bx\widetilde\by}(\tau) - \widehat\bSigma_{\widetilde \bx \widetilde \bx}\bTheta^*} \ge \sqrt{\frac{(\nu_1+\delta)(d_1+d_2)\log (d_1+d_2)}{n}}\Big)\le 2(d_1+d_2)^{1-\eta_1\delta},
		\]
	where $\tau \asymp \sqrt{n/((d_1+d_2)\log (d_1+d_2))}$ and $\nu_1$ and $\eta_1$ are universal constants.
	
\item [(b)]{\bf Bounded moment design}. Consider Condition (C2') that for any $\bv\in \cS^{d_1-1}$, $\E (\bv^T\bx_i)^4 \le M<\infty$. Under Conditions (C1), (C2') and (C3), it holds for any $\delta>0$
		\[
			P\Bigl( \opnorm{\widehat\bSigma_{\widetilde \bx\widetilde \by}(\tau_1, \tau_2)- \widehat\bSigma_{\widetilde \bx \widetilde \bx}(\tau_1)\bTheta^*} \ge \sqrt{\frac{(\nu_2+\delta)(d_1+d_2)\log(d_1+d_2)}{n}}\Bigr) \le 2(d_1+d_2)^{1-\eta_2\delta},
		\]
		where $\tau_1, \tau_2\asymp \bigl(n/((d_1+d_2)\log (d_1+d_2)) \bigr)^{\frac{1}{4}}$ and $\nu_2$ and $\eta_2$ are universal constants.
\end{itemize}
		\end{lem}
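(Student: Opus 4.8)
Both parts follow the same template, so I describe part~(a) in detail and then indicate the changes for part~(b). Write $\bG:=\widehat\bSigma_{\widetilde\bx\widetilde\by}-\widehat\bSigma_{\widetilde\bx\widetilde\bx}\bTheta^*$ for the $d_1\times d_2$ matrix whose operator norm must be bounded. Since $\widehat\bSigma_{\widetilde\bx\widetilde\bx}\bTheta^*=n^{-1}\sum_i\widetilde\bx_i(\bTheta^{*T}\widetilde\bx_i)^{T}$, we always have $\bG=n^{-1}\sum_{i=1}^n\widetilde\bx_i(\widetilde\by_i-\bTheta^{*T}\widetilde\bx_i)^{T}$; in part~(a) the design is not shrunk ($\widetilde\bx_i=\bx_i$), so using $\by_i=\bTheta^{*T}\bx_i+\bepsilon_i$,
\[
  \bG=\frac1n\sum_{i=1}^n\bx_i\bigl(\widetilde\by_i-\bTheta^{*T}\bx_i\bigr)^{T}
  =\underbrace{\frac1n\sum_{i=1}^n\bx_i\bepsilon_i^{T}}_{\text{oracle gradient}}
  +\underbrace{\frac1n\sum_{i=1}^n\bx_i(\widetilde\by_i-\by_i)^{T}}_{\text{shrinkage correction}}.
\]
The structural point is that, although $\bx_i\bepsilon_i^{T}$ by itself is heavy-tailed (because $\bepsilon_i$ is), the \emph{combined} summand $\bx_i(\widetilde\by_i-\bTheta^{*T}\bx_i)^{T}$ is tame: $\|\widetilde\by_i-\bTheta^{*T}\bx_i\|_2\le\tau+\opnorm{\bTheta^*}\|\bx_i\|_2$, and Conditions~(C1)--(C2) give $\opnorm{\bTheta^*}^2\le\lambda_{\max}(\E[\by_i\by_i^{T}])/\lambda_{\min}(\E[\bx_i\bx_i^{T}])\le R/\kappa_{\cL}=O(1)$, because $\E[\by_i\by_i^{T}]\succeq\bTheta^{*T}(\E[\bx_i\bx_i^{T}])\bTheta^*$. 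Thus shrinking the response simultaneously truncates the heavy-tailed noise, coupled to the sub-Gaussian design. I would then write $\bG=(\bG-\E\bG)+\E\bG$; since $\E[\bx_i\bepsilon_i^{T}]=\bzero$, the bias is $\E\bG=\E[\bx_i(\widetilde\by_i-\by_i)^{T}]$, and $\bG-\E\bG$ is the stochastic error.

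\emph{Stochastic error.} Using $\opnorm{\bA}=\sup_{\ba\in\cS^{d_1-1},\,\bb\in\cS^{d_2-1}}\ba^{T}\bA\bb$, I would replace the two spheres by fixed $\tfrac14$-nets of cardinalities at most $9^{d_1}$ and $9^{d_2}$ and, for each net pair, bound the centered scalar average $n^{-1}\sum_i[(\ba^{T}\bx_i)(\bb^{T}(\widetilde\by_i-\bTheta^{*T}\bx_i))-\E(\cdot)]$ by Bernstein's inequality (equivalently, apply matrix Bernstein directly to $n^{-1}\sum_i\bx_i(\widetilde\by_i-\bTheta^{*T}\bx_i)^{T}$). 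On a high-probability event on which the $\|\bx_i\|_2$ and the projections of $\bx_i$ onto net points are all of order $\sqrt{d_1}$ up to logarithmic factors (valid for sub-Gaussian $\bx_i$), the summands are sub-exponential with Bernstein scale $\lesssim\kappa_0(\tau+\sqrt{d_1})$, and the variance proxy is controlled --- using the conditional independence and bounded conditional second moment in (C3), together with (C1) and Cauchy--Schwarz --- by a quantity of order $d_1+d_2$. A union bound over the $9^{d_1+d_2}$ net pairs then yields $\opnorm{\bG-\E\bG}\lesssim\sqrt{(d_1+d_2)\log(d_1+d_2)/n}$ with the stated probability, once $\tau\asymp\sqrt{n/((d_1+d_2)\log(d_1+d_2))}$ makes the linear Bernstein term no larger than the square-root one.

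\emph{Bias and part~(b).} For the bias, writing $\widetilde\by_i-\by_i=-(1-\tau/\|\by_i\|_2)_+\by_i$ gives $|\ba^{T}\E[\bx_i(\widetilde\by_i-\by_i)^{T}]\bb|\le\E[|\ba^{T}\bx_i|\,|\bb^{T}\by_i|\,\mathbbm{1}(\|\by_i\|_2>\tau)]$ for unit $\ba,\bb$; bounding the indicator by $(\|\by_i\|_2/\tau)^{m}$ for a suitable $m$ and applying H\"older, with the split $\by_i=\bTheta^{*T}\bx_i+\bepsilon_i$ so that the sub-Gaussian signal contributes all its moments and the noise is handled through (C3), yields $\opnorm{\E\bG}\lesssim\tau^{-1}\asymp\sqrt{(d_1+d_2)\log(d_1+d_2)/n}$, matching the stochastic rate; combining the two bounds gives part~(a). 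For part~(b) the design is also shrunk by its $\ell_4$-norm, so now $\E\bG=\E[\widetilde\bx_i(\widetilde\by_i-\by_i)^{T}]+\E[\widetilde\bx_i(\bx_i-\widetilde\bx_i)^{T}]\bTheta^*$ carries an extra, design-shrinkage bias; it is here that shrinking by the $\ell_4$-norm rather than the $\ell_2$-norm matters, as it trades a dimension factor for the induced bias (cf.\ the discussion preceding the lemma) so that $\opnorm{\E[\widetilde\bx_i(\bx_i-\widetilde\bx_i)^{T}]}$, multiplied by $\opnorm{\bTheta^*}=O(1)$, remains of the target order under the choice $\tau_1,\tau_2\asymp(n/((d_1+d_2)\log(d_1+d_2)))^{1/4}$ and the bounded fourth moment (C2'). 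For the stochastic part the shrunk summands satisfy $\|\widetilde\bx_i\|_2\le d_1^{1/4}\tau_1$ and $\|\widetilde\by_i\|_2\le d_2^{1/4}\tau_2$, hence are bounded, and the same net-plus-Bernstein scheme applies; wherever $\widehat\bSigma_{\widetilde\bx\widetilde\bx}-\E[\widetilde\bx_i\widetilde\bx_i^{T}]$ enters, I would invoke the spectral-norm concentration of the $\ell_4$-shrinkage sample covariance established in Section~4.

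\emph{Main obstacle.} The crux is the bias--variance balance: one must verify that the shrinkage-induced bias $\opnorm{\E\bG}$ --- especially the design-shrinkage contribution in part~(b) --- is dominated by the target stochastic rate $\sqrt{(d_1+d_2)\log(d_1+d_2)/n}$, which pins down the precise thresholds and has to be extracted from rather weak moment hypotheses: a bounded second moment of the response vector, a bounded fourth moment of the design, and conditionally heavy-tailed noise. The auxiliary difficulty --- obtaining the operator-norm deviation with the correct $(d_1+d_2)$ and $\log(d_1+d_2)$ dependence while the noise is only conditionally second-moment bounded --- is exactly what the coupling identity $\bG=n^{-1}\sum_i\widetilde\bx_i(\widetilde\by_i-\bTheta^{*T}\widetilde\bx_i)^{T}$, combined with truncation and the Section~4 covariance bound, is built to circumvent.
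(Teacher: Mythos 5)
Your overall architecture (split into a centered stochastic term plus a shrinkage bias, control the stochastic term by a matrix-Bernstein/net argument, and for part (b) invoke the Section 4 covariance concentration) matches the paper's strategy, but two quantitative steps as written do not go through. First, the bias. You claim $\opnorm{\E[\bx_i(\widetilde\by_i-\by_i)^T]}\lesssim\tau^{-1}$, but the Cauchy--Schwarz/Markov computation gives $\sqrt{\E[(\ba^T\bx_i)^2(\bb^T\by_i)^2]\,P(\|\by_i\|_2>\tau)}$ with $P(\|\by_i\|_2>\tau)\le \E\|\by_i\|_2^2/\tau^2\le Rd_2/\tau^2$, i.e.\ a bias of order $\sqrt{d_2}/\tau$, and this dimension factor cannot be removed: under only a bounded second moment per coordinate, $\|\by_i\|_2$ is typically of order $\sqrt{d_2}$, so the event $\{\|\by_i\|_2>\tau\}$ is not rare unless $\tau\gg\sqrt{d_2}$ (your proposed fix of replacing the indicator by $(\|\by_i\|_2/\tau)^m$ only reintroduces $\E\|\by_i\|_2^m\asymp d_2^{m/2}$). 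With the threshold $\tau\asymp\sqrt{n/((d_1+d_2)\log(d_1+d_2))}$ the bias then exceeds the target rate by a factor $\sqrt{d_2}$; the paper's own proof in fact takes the larger threshold $\tau\asymp\sqrt{n/\log(d_1+d_2)}$, which makes the bias $\lesssim\sqrt{d_2\log(d_1+d_2)/n}\le\sqrt{(d_1+d_2)\log(d_1+d_2)/n}$ while still keeping the Bernstein linear term subdominant. You need to carry the $\sqrt{d_2}$ explicitly and adjust $\tau$ accordingly.

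Second, the stochastic term. You pair ``a variance proxy of order $d_1+d_2$'' with ``a union bound over the $9^{d_1+d_2}$ net pairs''; that combination is vacuous, since Bernstein at deviation $s=\sqrt{(d_1+d_2)\log(d_1+d_2)/n}$ with per-summand variance $\asymp d_1+d_2$ only yields $\exp(-c\log(d_1+d_2))$ per net point. The two routes are not interchangeable: the scalar-net argument requires showing the \emph{per-direction} variance $\E[(\ba^T\bx_i)^2(\bb^T\widetilde\by_i)^2]$ is $O(1)$ (which does hold, via the conditional independence in (C3) and $\E[(\bb^T\bepsilon_i)^2\mid\bx_i]=\sum_jb_j^2\E[\epsilon_{ij}^2\mid\bx_i]$), whereas the $O(d_1\vee d_2)$ proxy is the \emph{matrix} variance $\opnorm{\E\bS_i^T\bS_i}$, which must be paired with matrix Bernstein and its merely linear dimensional prefactor. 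Moreover, since $\bx_i$ is unbounded, the standard matrix Bernstein does not apply directly; the paper proves a moment-constrained version (Lemma \ref{lem:bernsteinm}) and bounds $\opnorm{\E\bS_i^p}$ for all $p$, which is the bulk of the work for part (a). Your ``condition on a high-probability boundedness event'' shortcut is plausible but would need to be executed (it perturbs means and independence). Part (b) is sketched in the right spirit (bounded shrunk summands, $\ell_4$-shrinkage bias of order $\sqrt{d_1}/\tau_1^2+\sqrt{d_2}/\tau_2^2$, Theorem \ref{thm:6} for $\widehat\bSigma_{\widetilde\bx\widetilde\bx}$), matching the paper's three-term decomposition there.
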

		
		\begin{rem}
    		When the noise and design are sub-Gaussian, \cite{NWa11} used the covering argument to show that for regular sample covariance matrices $\overline\bSigma_{\bx\by}$ and $\overline\bSigma_{\bx\bx}$, $$\opnorm{\overline\bSigma_{\bx\by}- \overline\bSigma_{\bx\bx}\bTheta^*}= \opnorm{\overline\bSigma_{\bx\by}-  \frac{1}{n}\sum\limits_{j=1}^n \bx_j\bx_j^T\bTheta^{*}}= \opnorm{\frac{1}{n}\sum\limits_{j=1}^n \bepsilon_j\bx_j^T} = O_P(\sqrt{(d_1+d_2)/n}).$$ Lemma \ref{lem:5} shows that up to just a logarithmic factor, the shrinkage sample covariance achieves nearly the same rate of convergence for noise and design with only bounded moments.
		\end{rem}
	
		Finally we establish the statistical error rate for the low-rank multi-tasking regression.
		
		
		\begin{thm}
		\label{thm:mt}
Statistical error rate for multitask regression.  Assume
$\cB_q(\bTheta^*)\le \rho$.
\begin{itemize}
\item [(a)] {\bf Sub-Gaussian design}.
		Suppose that Conditions (C1), (C2) and (C3) in Lemma \ref{lem:5} hold. For any $\delta>0$, choose
		\[
			\tau \asymp \sqrt{n/((d_1+d_2)\log (d_1+d_2))} \quad\text{and}\quad \lambda_N=\frac{2}{d_2}\sqrt{(\nu_1+\delta)(d_1+d_2) \log (d_1+d_2)/n},
		\]
		where $\nu_1$ is the same as in Lemma \ref{lem:5}. There exist constants $\gamma_1, \gamma_2>0$ such that if $(d_1+d_2)\log(d_1+d_2)/n<\gamma_1$ and $d_1+d_2\ge \gamma_2$,  then with probability at least $1-3(d_1+d_2)^{1-\eta_1\delta}$ we have
		\[
			\fnorm{\widehat\bTheta(\tau, \lambda_N) -\bTheta^*}^2 \le C_1\rho \Bigl(\frac{(\nu_1+\delta)(d_1+d_2)\log(d_1+d_2)}{n}\Bigr)^{1-\frac{q}{2}}
		\]
		and
		\[
			\nnorm{\widehat\bTheta(\tau, \lambda_N) -\bTheta^*} \le C_2\rho \Bigl(\frac{(\nu_1+\delta)(d_1+d_2)\log(d_1+d_2)}{n}\Bigr)^{\frac{1-q}{2}},
		\]
		where $C_1$ and $C_2$ are universal constants and $\eta_1$ is the same as in Lemma \ref{lem:5}.
\item [(b)] {\bf Bounded moment design}.
		Suppose instead that Conditions (C1), (C2') and (C3) in Lemma \ref{lem:5} hold. For any $\delta>0$, choose
		\[
		\tau_1, \tau_2\asymp \bigl(n/((d_1+d_2)\log (d_1+d_2)) \bigr)^{\frac{1}{4}}\quad \text{and}\quad \lambda_N=\frac{2}{d_2}\sqrt{(\nu_2+\delta)(d_1+d_2) \log (d_1+d_2)/n},
		\]
		where $\nu_2$ is the same as in Lemma \ref{lem:5}. There exist constants $\gamma_3, \gamma_4>0$ such that if $(d_1+d_2)\log(d_1+d_2)/n<\gamma_3$ and $d_1+d_2\ge \gamma_4$,  then with probability at least $1-3(d_1+d_2)^{1-\eta_2\delta}$,
		\[
			\fnorm{\widehat\bTheta(\tau_1, \tau_2, \lambda_N) -\bTheta^*}^2 \le C_3\rho \Bigl(\frac{(\nu_2+\delta)(d_1+d_2)\log(d_1+d_2)}{n}\Bigr)^{1-\frac{q}{2}}
		\]
		and
		\[	
			\nnorm{\widehat\bTheta(\tau_1, \tau_2, \lambda_N) -\bTheta^*} \le C_4\rho \Bigl(\frac{(\nu_2+\delta)(d_1+d_2)\log(d_1+d_2)}{n}\Bigr)^{\frac{1-q}{2}},
		\]
		where $C_3$ and $C_4$ are universal constants and $\eta_2$ is the same as in Lemma \ref{lem:5}.
\end{itemize}
	\end{thm}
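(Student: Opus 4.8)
The plan is to deduce both parts of Theorem~\ref{thm:mt} from the deterministic oracle inequality of Theorem~\ref{thm:1}, applied to the surrogates $\widehat\bSigma_{Y\bX}$, $\widehat\bSigma_{\bX\bX}$ of \eqref{eq:2.8}. Concretely, it suffices to exhibit a high-probability event on which (i) the prescribed $\lambda_N$ dominates $2\opnorm{\widehat\bSigma_{Y\bX}-\mat(\widehat\bSigma_{\bX\bX}\vec(\bTheta^*))}$, and (ii) a restricted-strong-convexity bound $\vec(\widehat\bDelta)^T\widehat\bSigma_{\bX\bX}\vec(\widehat\bDelta)\ge\kappa_{\cL}'\fnorm{\widehat\bDelta}^2$ holds with $\kappa_{\cL}'$ of order $d_2^{-1}$; once both are in hand I would substitute into Theorem~\ref{thm:1} and verify that the two factors of $d_2$ cancel, leaving precisely the stated rates.

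For (i), I would use the identity $\widehat\bSigma_{Y\bX}-\mat(\widehat\bSigma_{\bX\bX}\vec(\bTheta^*))=d_2^{-1}(\widehat\bSigma_{\widetilde\bx\widetilde\by}-\widehat\bSigma_{\widetilde\bx\widetilde\bx}\bTheta^*)$ recorded just before Lemma~\ref{lem:5}. In case~(a), Lemma~\ref{lem:5}(a) bounds $\opnorm{\widehat\bSigma_{\widetilde\bx\widetilde\by}(\tau)-\widehat\bSigma_{\widetilde\bx\widetilde\bx}\bTheta^*}$ by $\sqrt{(\nu_1+\delta)(d_1+d_2)\log(d_1+d_2)/n}$ with probability at least $1-2(d_1+d_2)^{1-\eta_1\delta}$ as soon as $(d_1+d_2)\log(d_1+d_2)/n$ is below an absolute constant; dividing by $d_2$ shows that the chosen $\lambda_N=\tfrac{2}{d_2}\sqrt{(\nu_1+\delta)(d_1+d_2)\log(d_1+d_2)/n}$ meets the hypothesis of Theorem~\ref{thm:1}. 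Case~(b) is the same argument with Lemma~\ref{lem:5}(b), $\nu_2$, $\eta_2$ and the thresholds $\tau_1,\tau_2$.

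For (ii) I would exploit the block structure $\widehat\bSigma_{\bX\bX}=d_2^{-1}\diag(\widehat\bSigma_{\widetilde\bx\widetilde\bx},\dots,\widehat\bSigma_{\widetilde\bx\widetilde\bx})$: writing $\bDelta_j$ for the $j$th column of $\bDelta$,
\[
 \vec(\bDelta)^T\widehat\bSigma_{\bX\bX}\vec(\bDelta)=\frac{1}{d_2}\sum_{j=1}^{d_2}\bDelta_j^T\widehat\bSigma_{\widetilde\bx\widetilde\bx}\bDelta_j\ge\frac{\lambda_{\min}(\widehat\bSigma_{\widetilde\bx\widetilde\bx})}{d_2}\sum_{j=1}^{d_2}\ltwonorm{\bDelta_j}^2=\frac{\lambda_{\min}(\widehat\bSigma_{\widetilde\bx\widetilde\bx})}{d_2}\fnorm{\bDelta}^2,
\]
valid for every $\bDelta$, so for the multi-task model the restricted-eigenvalue requirement collapses to keeping $\lambda_{\min}(\widehat\bSigma_{\widetilde\bx\widetilde\bx})$ above a positive constant, with no cone restriction needed. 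In case~(a), $\widehat\bSigma_{\widetilde\bx\widetilde\bx}$ is the ordinary sample covariance of the sub-Gaussian vectors $\bx_i$, and the classical spectral concentration inequality gives $\opnorm{\widehat\bSigma_{\widetilde\bx\widetilde\bx}-\E\bx_1\bx_1^T}\le C\kappa_0^2(\sqrt{d_1/n}+d_1/n)$ with probability $1-2\exp(-cd_1)$; under the scaling condition this is below $\kappa_{\cL}/2$, hence $\lambda_{\min}(\widehat\bSigma_{\widetilde\bx\widetilde\bx})\ge\kappa_{\cL}/2$. In case~(b), $\widehat\bSigma_{\widetilde\bx\widetilde\bx}=\widehat\bSigma_{\widetilde\bx\widetilde\bx}(\tau_1)$ is the $\ell_4$-shrinkage sample covariance, and I would invoke the spectral-norm concentration bound derived in Section~4, which gives $\opnorm{\widehat\bSigma_{\widetilde\bx\widetilde\bx}(\tau_1)-\E\bx_1\bx_1^T}=O_P(\sqrt{d_1\log d_1/n})$; the scaling $(d_1+d_2)\log(d_1+d_2)/n<\gamma_3$ again forces this below $\kappa_{\cL}/2$ (using $\lambda_{\min}(\E\bx_1\bx_1^T)\ge\kappa_{\cL}$ from the model). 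Either way $\kappa_{\cL}'\asymp\kappa_{\cL}/d_2$, so in Theorem~\ref{thm:1} the ratio $\lambda_N/\kappa_{\cL}'$ is of order $\kappa_{\cL}^{-1}\sqrt{(\nu_i+\delta)(d_1+d_2)\log(d_1+d_2)/n}$ — the $d_2$'s having cancelled — and $\fnorm{\widehat\bDelta}^2\le C\rho(\lambda_N/\kappa_{\cL}')^{2-q}$, $\nnorm{\widehat\bDelta}\le C\rho(\lambda_N/\kappa_{\cL}')^{1-q}$ become the asserted bounds after folding $\kappa_{\cL}$ and absolute constants into the $C_i$. For the final probability I would intersect the Lemma~\ref{lem:5} event with the event controlling $\lambda_{\min}$ and use $d_1+d_2\ge\gamma_2$ (resp.\ $\gamma_4$) precisely to guarantee that the latter's failure probability — $\exp(-cd_1)$ in case~(a), the polynomial tail from Section~4 in case~(b) — is at most $(d_1+d_2)^{1-\eta_i\delta}$, producing the stated $1-3(d_1+d_2)^{1-\eta_i\delta}$.

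I expect the main obstacle to be the case-(b) spectral control of $\widehat\bSigma_{\widetilde\bx\widetilde\bx}(\tau_1)$: one must check that $\ell_4$-shrinkage at the somewhat aggressive threshold $\tau_1\asymp(n/((d_1+d_2)\log(d_1+d_2)))^{1/4}$ — more aggressive than the threshold chosen for the stand-alone covariance estimator in Section~4 — still yields spectral-norm consistency with the right rate, and that the attendant shrinkage bias does not leak into the final statistical rate. Fortunately the sharp bias accounting is already packaged inside the cross-covariance bound of Lemma~\ref{lem:5}(b), and for the restricted-eigenvalue step only a crude $o(1)$ control of the spectral error is needed; what remains beyond that is routine probability bookkeeping to keep all three bad events of order $(d_1+d_2)^{1-\eta_i\delta}$.
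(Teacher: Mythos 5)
Your proposal is correct and follows essentially the same route as the paper's proof: Lemma~\ref{lem:5} supplies the bound on $\opnorm{\widehat\bSigma_{Y\bX}-\mat(\widehat\bSigma_{\bX\bX}\vec(\bTheta^*))}=d_2^{-1}\opnorm{\widehat\bSigma_{\widetilde\bx\widetilde\by}-\widehat\bSigma_{\widetilde\bx\widetilde\bx}\bTheta^*}$, the block-diagonal identity $\vec(\bDelta)^T\widehat\bSigma_{\bX\bX}\vec(\bDelta)=d_2^{-1}\sum_k\bDelta_k^T\widehat\bSigma_{\widetilde\bx\widetilde\bx}\bDelta_k$ reduces restricted strong convexity to $\lambda_{\min}(\widehat\bSigma_{\widetilde\bx\widetilde\bx})\ge\kappa_{\cL}/2$ (via Vershynin's Theorem~5.39 in case~(a) and the Section~4 shrinkage bound in case~(b)), and Theorem~\ref{thm:1} then yields the rates with the $d_2$'s cancelling exactly as you describe. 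The one caveat you flag --- that in case~(b) the spectral concentration of $\widehat\bSigma_{\widetilde\bx\widetilde\bx}(\tau_1)$ is being invoked at a threshold more aggressive than the one for which Theorem~\ref{thm:6} is stated --- is a legitimate observation, but the paper's own proof invokes Theorem~\ref{thm:6} at this point without addressing the mismatch either, so your treatment is no less complete than the original.
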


%
%
%
%
%

	\section{Robust Covariance Estimation} \label{sec4}
	
	In derivation of $\opnorm{\widehat\bSigma_{\widetilde\bx \widetilde\by}(\tau_1, \tau_2)- \widehat\bSigma_{\widetilde\bx\widetilde\bx}(\tau_1)\bTheta^*}$ in multi-tasking regression, we find that when the random sample has only bounded moments, the $\ell_4-$norm shrinkage sample covariance achieves the spectral norm convergence rate of order $O_P(\sqrt{d \log d/n})$ in estimating the true covariance. This is nearly the optimal rate with sub-Gaussian random samples, up to a logarithmic factor. Here we formulate the problem and the result, whose proof is relegated to the appendix.
	
	Suppose we have $n$ i.i.d. $d$-dimensional random vectors $\{\bx_i\}_{i=1}^n$ with $\E\bx_i=\bzero$. Our goal is to estimate the covariance matrix $\bSigma=\E(\bx_i\bx_i^T)$ when the distribution of $\{\bx_i\}_{i=1}^n$ has only fourth bounded moment. For any $\tau\in \RR^+$, let $\widetilde\bx_i:=(\|\bx_i\|_4 \wedge \tau)\bx_i/\|\bx_i\|_4$, where $\|\cdot\|_4$ is the $\ell_4-$norm. We propose the following shrinkage sample covariance to estimate $\bSigma$.
	\beq
		\label{eq:4.1}
		\widetilde\bSigma_n(\tau)=\frac{1}{n}\sum\limits_{i=1}^n \widetilde\bx_i\widetilde\bx_i^T.
	\eeq
	
	\begin{thm}
		\label{thm:6}
		Suppose $\E(\bv^T\bx_i)^4\le R$ for any $\bv\in \cS^{d-1}$, then it holds that for any $\delta>0$,
		\begin{equation}
			P\Big(\opnorm{\widetilde\bSigma_n(\tau) - \bSigma} \ge \sqrt{\frac{\delta Rd \log d}{n}} \Big)\le d^{1-C\delta},
		\end{equation}
		where $\tau\asymp \bigl(nR/(\delta\log d) \bigr)^{1/4}$ and $C$ is a universal constant.
	\end{thm}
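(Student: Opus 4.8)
The plan is to control $\opnorm{\widetilde\bSigma_n(\tau)-\bSigma}$ by decomposing it into a bias term and a stochastic-fluctuation term and then bounding each separately. First I would write $\opnorm{\widetilde\bSigma_n(\tau)-\bSigma} \le \opnorm{\widetilde\bSigma_n(\tau)-\E\widetilde\bSigma_n(\tau)} + \opnorm{\E\widetilde\bSigma_n(\tau)-\bSigma}$. For the bias term, the key observation is that shrinkage only affects the sample when $\|\bx_i\|_4>\tau$, and on that event $\widetilde\bx_i\widetilde\bx_i^T = \tau^2\bx_i\bx_i^T/\|\bx_i\|_4^2$, so that $\E\widetilde\bx_i\widetilde\bx_i^T - \bSigma = -\E\bigl[(1-\tau^2/\|\bx_i\|_4^2)\bx_i\bx_i^T\,\mathbbm{1}\{\|\bx_i\|_4>\tau\}\bigr]$. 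Taking the operator norm and testing against a unit vector $\bv$, I would bound $\bigl|\bv^T(\E\widetilde\bx_i\widetilde\bx_i^T-\bSigma)\bv\bigr| \le \E\bigl[(\bv^T\bx_i)^2\,\mathbbm{1}\{\|\bx_i\|_4>\tau\}\bigr]$, and then use Cauchy--Schwarz together with $\E(\bv^T\bx_i)^4\le R$ and a Markov bound $P(\|\bx_i\|_4>\tau) = P(\|\bx_i\|_4^4>\tau^4)\le \E\|\bx_i\|_4^4/\tau^4$. The quantity $\E\|\bx_i\|_4^4 = \sum_j \E x_{ij}^4 \le dR$ since each $\E x_{ij}^4 = \E(\be_j^T\bx_i)^4\le R$. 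This gives a bias of order $\sqrt{R}\cdot\sqrt{dR/\tau^4} = Rd^{1/2}/\tau^2$, which with the choice $\tau^4\asymp nR/(\delta\log d)$ becomes $O(R\sqrt{d\log d/n})$ up to the $\delta$ factors — matching the target rate.

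For the stochastic term I would use a standard $\varepsilon$-net / matrix-concentration argument. Covering $\cS^{d-1}$ by a $1/4$-net $\cN$ of cardinality at most $9^d$, it suffices to control $\max_{\bv\in\cN}\bigl|\frac1n\sum_i\bigl((\bv^T\widetilde\bx_i)^2 - \E(\bv^T\widetilde\bx_i)^2\bigr)\bigr|$ and pay the usual factor (here $2$) to pass from the net to the sphere. For a fixed $\bv$, each summand $(\bv^T\widetilde\bx_i)^2$ is bounded: since $\|\widetilde\bx_i\|_4\le\tau$ we have $|\bv^T\widetilde\bx_i| \le \|\widetilde\bx_i\|_2 \le d^{1/4}\|\widetilde\bx_i\|_4 \le d^{1/4}\tau$, hence $(\bv^T\widetilde\bx_i)^2\le \sqrt d\,\tau^2$. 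The variance of the sum is controlled by $\frac1n\E(\bv^T\widetilde\bx_i)^4 \le \frac1n\E(\bv^T\bx_i)^4\le R/n$ (shrinkage only decreases the fourth moment along any direction, which one checks since $\|\widetilde\bx_i\|_4\le\|\bx_i\|_4$ means the shrinkage factor is in $[0,1]$ and it scales each coordinate, hence $|\bv^T\widetilde\bx_i|\le|\bv^T\bx_i|$ pointwise — wait, that is false since the factor is a scalar, so indeed $|\bv^T\widetilde\bx_i| = (\text{factor})\cdot|\bv^T\bx_i| \le |\bv^T\bx_i|$, so the bound holds). Then Bernstein's inequality gives, for a fixed $\bv$, a tail of the form $\exp\bigl(-c\min(nt^2/R,\, nt/(\sqrt d\tau^2))\bigr)$. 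Setting $t \asymp \sqrt{\delta R d\log d/n}$ makes the quadratic branch $\exp(-c\delta d\log d)$ which absorbs the $\log 9^d = d\log 9$ from the union bound; one checks the linear branch is also dominated given $\tau^4\asymp nR/(\delta\log d)$, since then $\sqrt d\tau^2 = \sqrt{d}\cdot\sqrt{nR/(\delta\log d)}$ and $nt/(\sqrt d\tau^2) \asymp n\sqrt{\delta Rd\log d/n}/\sqrt{dnR/(\delta\log d)} = \sqrt{n}\cdot\delta\log d/\sqrt{n} \cdot(\ldots) \asymp \delta\log d\cdot\sqrt{\text{something}\ge 1}$, which is enough (one may need $n\gtrsim d$ or simply absorb constants).

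Combining the two bounds and adjusting the constant $C$ (and possibly shrinking $\delta$ by an absolute factor so both the $d^{1-C\delta}$ probability statement and the $\sqrt{\delta Rd\log d/n}$ radius hold simultaneously) yields the claim. I expect the main obstacle to be the bookkeeping around the linear (sub-exponential) branch of Bernstein's inequality: one must verify that with the prescribed $\tau\asymp(nR/(\delta\log d))^{1/4}$ the boundedness parameter $\sqrt d\tau^2$ is small enough relative to $n$ that the linear branch does not degrade the rate — this is where an implicit mild scaling assumption ($n$ not too small relative to $d$) or a careful choice of constants is needed. A secondary subtlety is making the direction-wise fourth-moment bound for the shrunk vector fully rigorous (the scalar-factor argument above), and ensuring the net argument's constant-factor slack ($\opnorm{\bA}\le 2\max_{\bv\in\cN}|\bv^T\bA\bv|$ for symmetric $\bA$) is applied correctly; both are routine once set up. The bias analysis is the conceptually cleanest part and essentially dictates the choice of $\tau$.
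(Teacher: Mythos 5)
Your bias analysis is exactly the paper's: test $\E\widetilde\bx_i\widetilde\bx_i^T-\bSigma$ against a unit vector, apply Cauchy--Schwarz and Markov with $P(\|\bx_i\|_4>\tau)\le dR/\tau^4$, and obtain a bias of order $R\sqrt d/\tau^2$, which the prescribed $\tau$ turns into the target rate. The genuine gap is in the stochastic term. Your $1/4$-net union bound costs a factor $9^d=e^{d\log 9}$, so you need the per-direction Bernstein tail to be $e^{-cd}$ or better. But at the target deviation $t\asymp\sqrt{\delta Rd\log d/n}$, with variance proxy $R$ and almost-sure bound $M=\sqrt d\,\tau^2=\sqrt{dnR/(\delta\log d)}$, one checks $t=d\cdot(R/M)$, so you are deep in the sub-exponential branch and the exponent is $nt/M=\delta\log d$ exactly --- independent of $n$. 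Each fixed direction therefore fails with probability only about $d^{-c\delta}$, and $9^d\cdot d^{-c\delta}$ is not small unless $\delta\gtrsim d/\log d$. No side condition such as $n\gtrsim d$ repairs this, because once $\tau$ and $t$ are tied to $n$ as prescribed the ratio $nt/M$ does not grow with $n$; the summands $(\bv^T\widetilde\bx_i)^2$ are simply too large (up to $\sqrt d\,\tau^2$) for a scalar union bound over exponentially many directions.

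The paper sidesteps this by applying a matrix Bernstein inequality directly to $\widetilde\bX_i=\widetilde\bx_i\widetilde\bx_i^T$: the dimensional price is then a prefactor $2d$ rather than $9^d$, the matrix variance proxy is $\opnorm{\E\widetilde\bX_i^T\widetilde\bX_i}\le Rd$ (via $\E(\ltwonorm{\bx_i}^2(\bv^T\bx_i)^2)\le\sum_{j}\sqrt{\E x_{ij}^4\,\E(\bv^T\bx_i)^4}\le Rd$), and with the same $t$ both branches of the exponent evaluate to $\asymp\delta\log d$, yielding $2d^{1-c\delta}$. Your decomposition, bias bound, choice of $\tau$, the pointwise inequality $|\bv^T\widetilde\bx_i|\le|\bv^T\bx_i|$, and the boundedness estimate $\ltwonorm{\widetilde\bx_i}^2\le\sqrt d\,\tau^2$ are all correct; the one essential change is to replace the scalar-net concentration step by a matrix concentration inequality (or any device whose dimensional cost is polynomial rather than exponential in $d$).
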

		
		We have several comments for the above theorem. First of all, unlike the bounded moment conditions in the previous section, $R$ here can go to infinity with certain rates. Hence we also put this quantity into the rate of convergence. If indeed $R < \infty$, we recover the true covariance matrix with statistical error of $O_P(\sqrt{d\log d/n})$. In comparison with the robust covariance matrix estimator given in \cite{FLW16}, our estimator here is positive semidefinite and is very simple to implement.  Our concentration inequality here is on the operator norm, while their result is on the element-wise max norm.
		
		If we are concerned with statistical error in terms of elementwise max norm, then we need to apply elementwise truncation to the random samples rather than $\ell_4$ norm shrinkage. Let $\check \bx_i$ satisfy $\check x_{ij}=sgn(x_{ij})(|x_{ij}|\wedge \tau)$ for $1\le j\le d_1$ and $\check\bSigma_n= \sum\limits_{i=1}^n \check \bx_i\check\bx_i^T$. It is not hard to derive $\|\check \bSigma_{n}- \bSigma\|_{\max}= O_P(\sqrt{\log d/n})$ as in \cite{FLW16} with $\tau \asymp (n/\log d)^{\frac{1}{4}}$. Further regularization can be applied to $\check\bSigma_n$ if the true covariance is sparse. See, for example, \cite{MBu06}, \cite{BLe08}, \cite{LFa09}, \cite{CLi11}, \cite{CZh12}, \cite{FLM13}, among others.
		 		
		Theorem \ref{thm:6} is non-asymptotic as all the results in the previous sections and can be applied to both low-dimensional and high-dimensional regimes. In fact, $\widetilde\bSigma_n(\tau)$ can outperform the sample covariance $\overline\bSigma_n$ even with Gaussian samples if the dimension is high. The reason is that according to Theorem 5.39 in \cite{Ver10}, $\opnorm{\overline\bSigma_n- \bSigma}=O_P(\sqrt{d/n} \vee (d/n))$. When $d/n$ is large, the $d/n$ term will dominate $\sqrt{d/n}$, thus delivering statistical error of order $d/n$ for Gaussian sample covariance. However, our shrinkage sample covariance always retains the statistical error of order $\sqrt{d\log d/n}$ regardless of ratio between the dimension and sample size. Therefore, shrinkage overcomes not only heavy-tailed corruption, but also curse of dimensionality. In Section 5.4, we conduct simulations to further illustrate this point.
		
		

	\section{Simulation Study}
	
	In this section, we first compare the numerical performance of the robust procedure and standard procedure in compressed sensing, matrix completion and multi-tasking regression. For each setup, we investigate three noise settings: log-normal noise, truncated Cauchy noise and Gaussian noise. They represent heavy-tailed asymmetric distributions, heavy-tailed symmetric distributions and light-tailed symmetric distributions.  The results from the standard and robust methods are shown in the same color for each scenario in the following figures so that they can be compared more easily.

All the robust procedures proposed in our work are very easy to implement; we only need to truncate or shrink the data appropriately, and then apply the standard procedure to the transformed data. As for the parameter tuning, we refer to the rate developed in our theories for robust procedures and theories in \cite{NWa11} and \cite{NWa12}  for standard procedures. The constants before the rate are tuned for best performance. The main message is that the robust procedure outperforms the standard procedure under the setting with bounded moment noise, and it performs equally well as the standard procedure under the Gaussian noise. The simulations are based on $100$ independent Monte Carlo replications. There are no standard algorithms for solving penalized trace regression problems.  Hence, besides presenting the numerical performance, we also elucidate the algorithms that we use to solve the corresponding optimization problems, which might be of interest to readers concerned with implementations.  These algorithms are not necessarily the same as those used in the literature.

	We also compare the numerical performance of the regular sample covariance and shrinkage sample covariance as proposed in \eqref{eq:4.1} in estimating the true covariance. We choose $d/n=0.2, 0.5, 1$ and for each ratio, we let $n=100, 200, ..., 500$.  Simulation results show superiority of the shrinkage sample covariance over the regular sample covariance under both Gaussian noise and $t_3$ noise. Therefore, the shrinkage can not only overcome the heavy-tailed corruption, but also mitigate the curse of high dimensions.

	\subsection{Compressed Sensing}
	
	We first specify the parameters in the true model: $Y=\inn{\bX_i, \bTheta^*}+\epsilon_i$. We always set $d_1=d_2=d$, $\fnorm{\bTheta^*}=\sqrt{5}$ and $rank(\bTheta^*)=5$. In the simulation, we construct $\bTheta^*$ to be $\sum\limits_{i=1}^5 \bv_i\bv_i^T$, where $\bv_i$ is the $i$th top eigevector of the sample covariance of $100$ i.i.d. centered Gaussian random vectors with covariance $\bI_d$. The design matrix $\bX_i$ has i.i.d. standard Gaussian entries. The noise distributions are characterized as follows:
	\begin{itemize}
		\item Log-normal: $\epsilon_i \overset{d}{=} (Z-\E Z)/50$, where $Z \sim ln \cN(0, \sigma^2)$ and $\sigma^2=6.25$;
		\item Truncated Cauchy: $\epsilon_i \overset{d}{=} \min(Z, 10^3)/10$, where $Z$ follows Cauchy distribution;
		\item Gaussian: $\epsilon_i \sim N(0, \sigma^2)$, where $\sigma^2=0.25$.
	\end{itemize}
	The constants above are chosen to ensure appropriate signal-to-noise ratio for better presentation. We present the numerical results in Figure \ref{fig:1}. As we can observe from the plots, the robust estimator has much smaller statistical error than the standard estimator under the heavy-tailed noise, i.e., the log-normal and truncated Cauchy noise. When $d=40$ or $60$, robust procedures deliver sharper estimation as the sample size increases, while the standard procedure does not necessarily do so under the heavy-tailed noise. Under the setting of Gaussian noise, the robust estimator has nearly the same statistical performance as the standard one, which shows that it does not hurt to use the robust procedure under the light-tailed noise setting.
	
		\begin{figure}
		\begin{tabular}{ccc}
			\hspace{-.3in}\includegraphics[height=3in, width=2.4in]{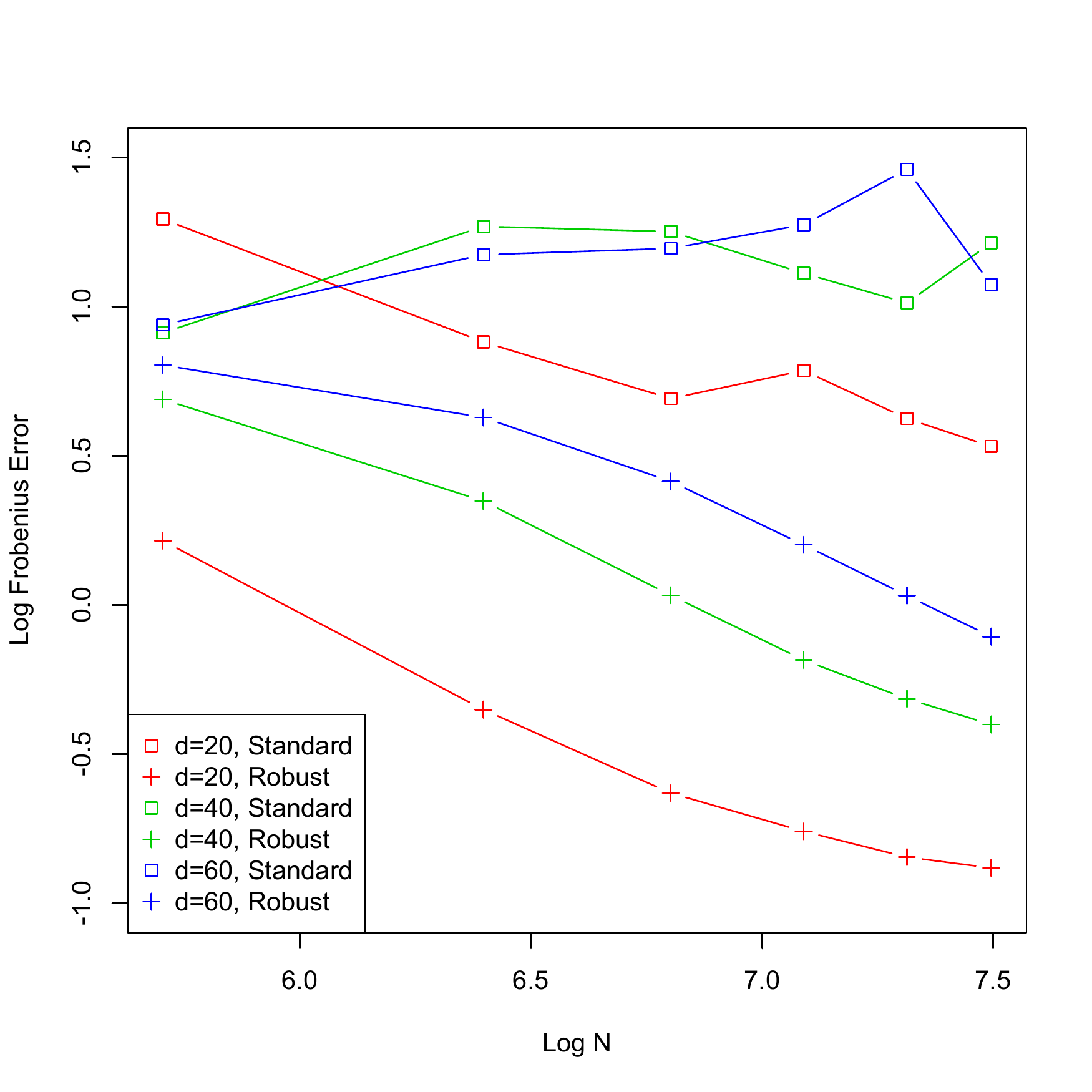} & \hspace{-.2in}\includegraphics[height=3in, width=2.4in]{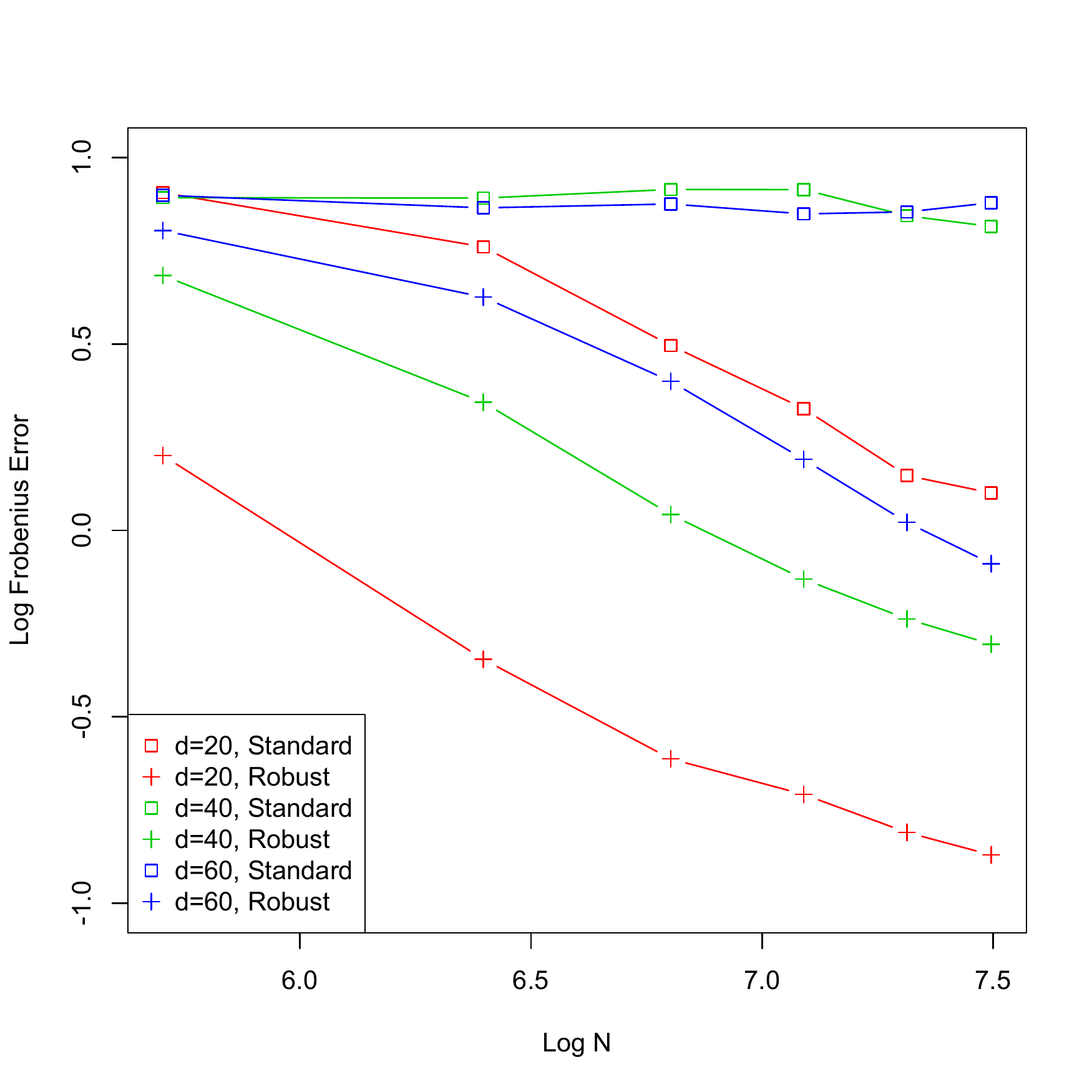} & \hspace{-.25in}\includegraphics[height=3in, width=2.4in]{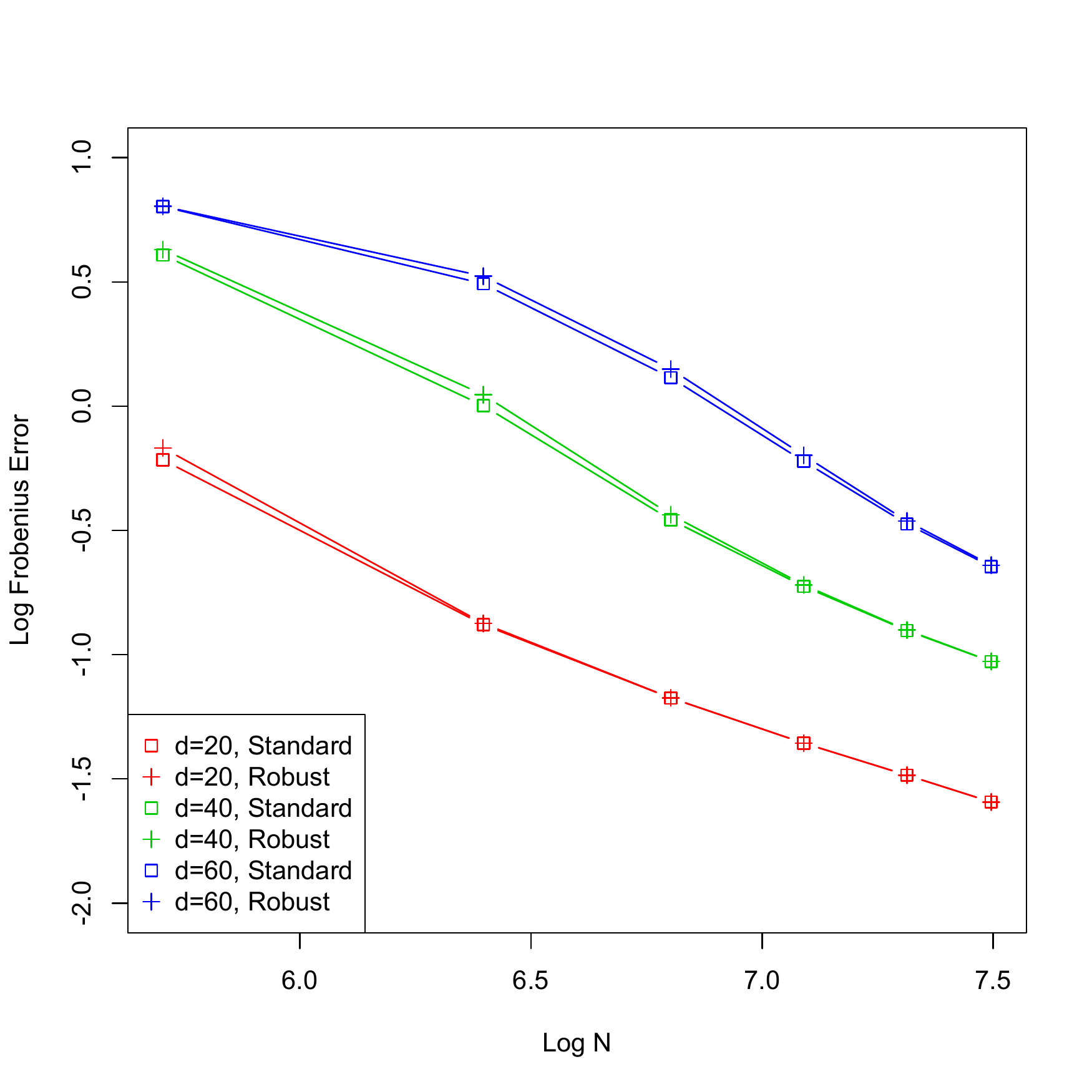} \\
			Log-normal Noise & Truncated Cauchy Noise & Gaussian Noise
		\end{tabular}
		\caption{Statistical errors of $\ln \fnorm{\widehat\bTheta- \bTheta^*}$ v.s. logarithmic sample size $\ln N$ for different dimensions $d$ in matrix compressed sensing.}
		\label{fig:1}
	\end{figure}
	
	As for the implementation, we exploit the contractive Peaceman-Rachford splitting method (PRSM) to solve the compressed sensing problem.
Here we briefly introduce the general scheme of the contractive PRSM for clarity. The contractive PRSM is for minimizing the summation of two convex functions under linear constraint:
	\beq
		\begin{aligned}
			\min_{\bx\in \RR^{p_1}, \by\in \RR^{p_2}} & \quad f_1(\bx)+f_2(\by), \\
			\text{subject to} & \quad\bC_1\bx+\bC_2\by-\bc=\bzero,
		\end{aligned}
	\eeq
	where $\bC_1\in \RR^{p_3\times p_1}$, $\bC_2\in \RR^{p_3\times p_2}$ and $\bc\in \RR^{p_3}$. The general iteration scheme of the contractive PRSM is
	\beq
		\label{eq:5.2}
		\left\{
		\begin{aligned}
		& \bx^{(k+1)}=\argmin_\bx \bigl\{ f_1(\bx)-(\brho^{(k)})^T(\bC_1\bx+\bC_2\by^{(k)}-\bc)+\frac{\beta}{2}\|\bC_1\bx+\bC_2\by^{(k)}-\bc\|_2^2\bigr\}, \\
		& \brho^{(k+\frac{1}{2})}= \brho^{(k)}-\alpha\beta(\bC_1\bx^{(k+1)}+\bC_2\by^{(k)}-\bc), \\
		& \by^{(k+1)}= \argmin_{\by}\bigl\{f_2(\by)-(\brho^{(k+\frac{1}{2})})^T(\bC_1\bx^{(k+1)}+\bC_2\by-\bc)+\frac{\beta}{2}\|\bC_1\bx^{(k+1)}+\bC_2\by-\bc\|_2^2\bigr\}, \\
		& \brho^{(k+1)}= \brho^{(k+\frac{1}{2})}-\alpha\beta(\bC_1\bx^{(k+1)}+\bC_2\by^{(k+1)}-\bc),
		\end{aligned}
		\right .
	\eeq
	where $\brho\in \RR^{p_3}$ is the Lagrangian multiplier, $\beta$ is the penalty parameter and $\alpha$ is the relaxation factor (\cite{EBe92}). Since the parameter of interest in our work is $\bTheta^*$, now we use $\btheta_x \in \RR^{d_1d_2}$ and  $\btheta_y \in \RR^{d_1d_2}$ to replace $\bx$ and $\by$ respectively in \eqref{eq:5.2}. By substituting
	\[
		f_1(\btheta_x)=\frac{1}{N}\sum\limits_{i=1}^N (Y_i-\inn{\vec(\bX_i), \btheta_x})^2,\ f_2(\btheta_y)=\lambda \nnorm{\mat(\btheta_y)},\ \bC_1=\bI,\ \bC_2=-\bI\ \text{and}\ \bc=\bzero
	\]
	into \eqref{eq:5.2}, we can obtain the PRSM algorithm for the compressed sensing problem. Let $\mathbb{X}$ be a $N$-by-$d_1d_2$ matrix whose rows are i.i.d. random designs $\{\vec(\bX_i)\}_{i=1}^N$ and $\mathbb{Y}$ be the $N$-dimensional response vector. Then we have the following iteration scheme specifically for compressed sensing.
	
	\beq
		\left\{
		\begin{aligned}
		& \btheta_x^{(k+1)}=(2\bbX^T\bbX/N+ \beta \cdot \bI )^{-1}(\beta\cdot \btheta_y^{(k)}+\brho^{(k)} + 2\bbX^T\bbY/N), \\
		& \brho^{(k+\frac{1}{2})}= \brho^{(k)}-\alpha\beta(\btheta_x^{(k+1)}-\btheta_y^{(k)}), \\
		& \btheta_y^{(k+1)}=\vec(S_{\lambda/\beta}(\mat(\btheta_x-\brho^{(k+\frac{1}{2})}))), \\
		& \brho^{(k+1)}= \brho^{(k+\frac{1}{2})}-\alpha\beta(\btheta_x^{(k+1)}-\btheta_y^{(k+1)}),
		\end{aligned}
		\right .
		\label{eq:csiter}
	\eeq
	where we choose $\alpha=0.9$ and $\beta=1$ according to \cite{EBe92} and \cite{HLW14}, $\brho \in \RR^{d_1d_2}$ is the Lagrangian multiplier and $S_\tau(\bz)$ is the singular value soft-thresholding function for matrix version of $\bz\in \RR^{d_1d_2}$. To be more specific, let $\bZ=\mat(\bz)\in \RR^{d_1\times d_2}$ and $\bZ=\bU\bLambda\bV^T= \bU\ diag(\lambda_1,..., \lambda_r)\ \bV^T$ be its singular value decomposition. Then $S_\tau(\bz)=\vec\bigl(\bU\  diag((\lambda_1-\tau)_+, (\lambda_2-\tau)_+, ..., (\lambda_r-\tau)_+) \ \bV^T\bigr)$, where $(x)_+=\max(x, 0)$.
	The algorithm stops if $\ltwonorm{\btheta_x- \btheta_y}$ is smaller than some predetermined threshold, and returns $\mat(\btheta_y)$ as the final estimator of $\bTheta^*$.
	

	\subsection{Matrix Completion}
	
We again set $d_1=d_2=d$ and construct $\bTheta^*$ to be $\sum\limits_{i=1}^5 \bv_i\bv_i^T/\sqrt{5}$, where $\bv_i$ is the $i$th top eigevector of the sample covariance of $100$ i.i.d. centered Gaussian random vectors with covariance $\bI_d$. 
Each design matrix $\bX_i$ takes the singleton form, which is uniformly sampled from $\{\be_j\be_k^T\}_{1\le j, k\le d}$.  The noise distributions are
	\begin{itemize}
		\item Log-normal: $\epsilon_i \overset{d}{=} (Z-\E Z)/250$, where $Z \sim \ln \cN(0, \sigma^2)$ and $\sigma^2=9$;
		\item Truncated Cauchy: $\epsilon_i \overset{d}{=} \min(Z, 10^3)/16$, where $Z$ follows Cauchy distribution;
		\item Gaussian: $\epsilon_i \sim N(0, \sigma^2)$, where $\sigma^2=0.25$.
	\end{itemize}
	Again, the constants above are set for an appropriate signal-to-noise ratio for better presentation. We present the numerical results in Figure \ref{fig:2}. Analogous to the matrix compressed sensing, we can observe from the figure that compared with the standard procedure, the robust procedure has significantly smaller statistical error in estimating $\bTheta^*$ under the log-normal and truncated Cauchy noise. Under Gaussian noise, the robust procedure has nearly the same statistical performance as the standard procedure.
	
		\begin{figure}
		\begin{tabular}{ccc}
			\hspace{-.3in}\includegraphics[height=3in, width=2.4in]{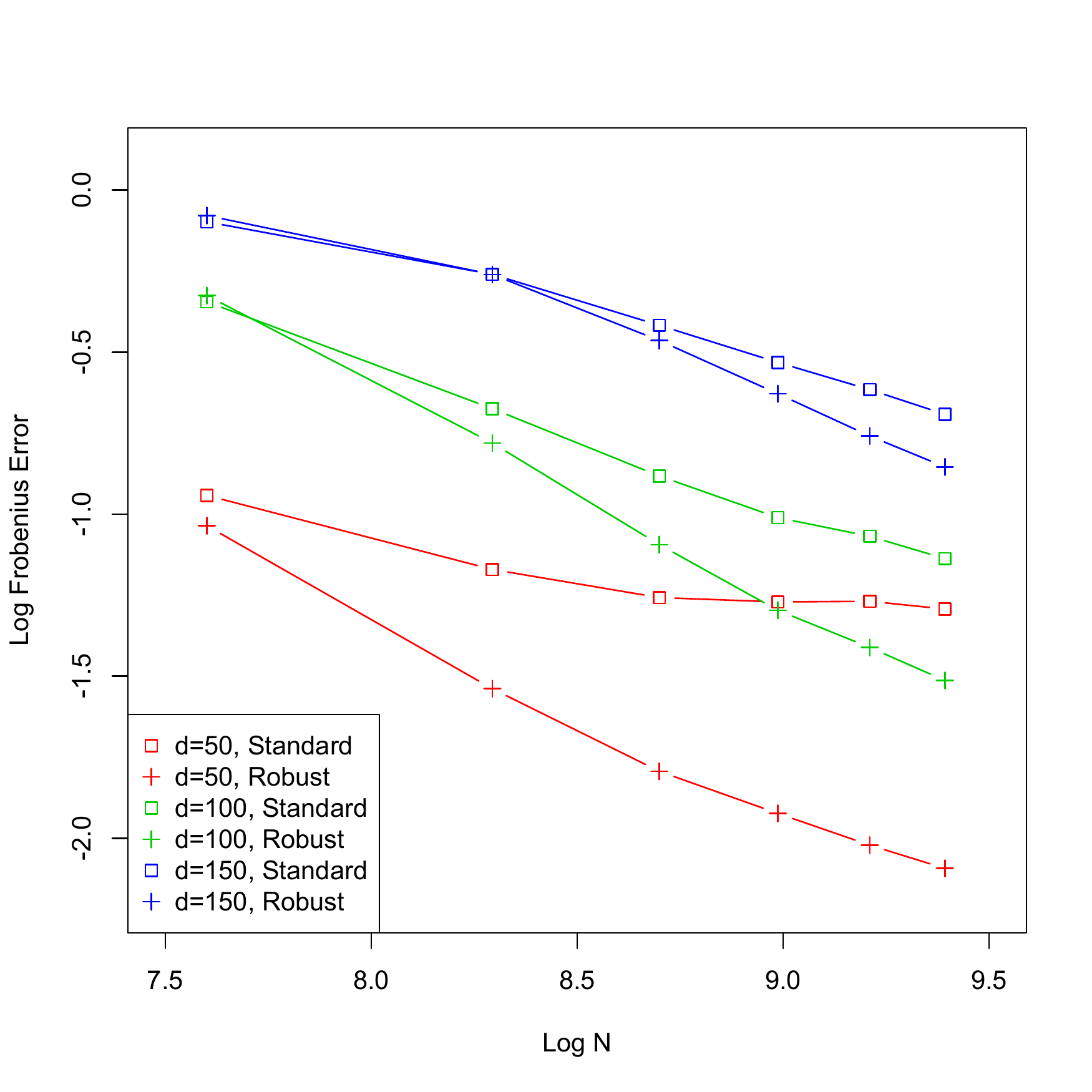} & \hspace{-.2in}\includegraphics[height=3in, width=2.4in]{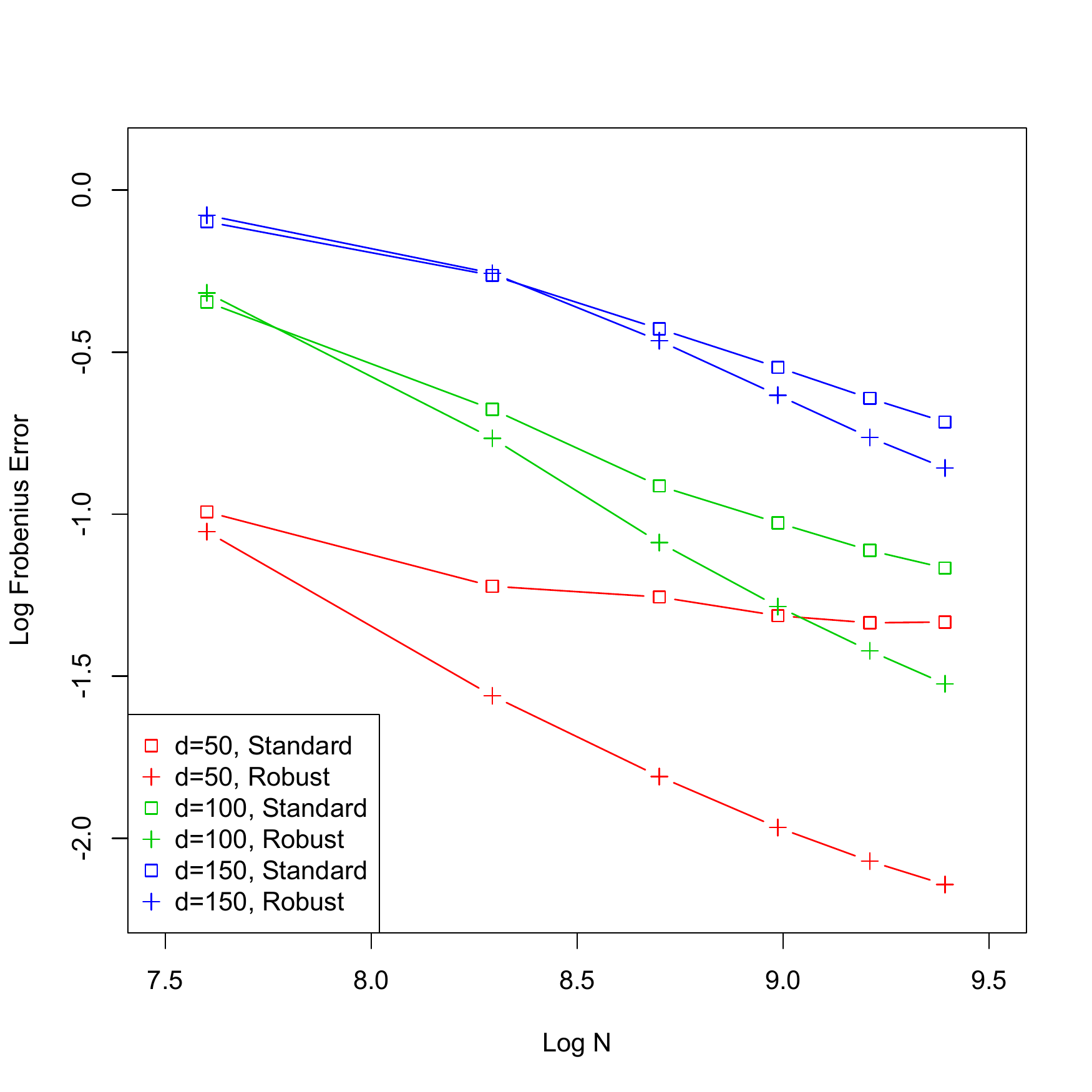} & \hspace{-.25in}\includegraphics[height=3in, width=2.4in]{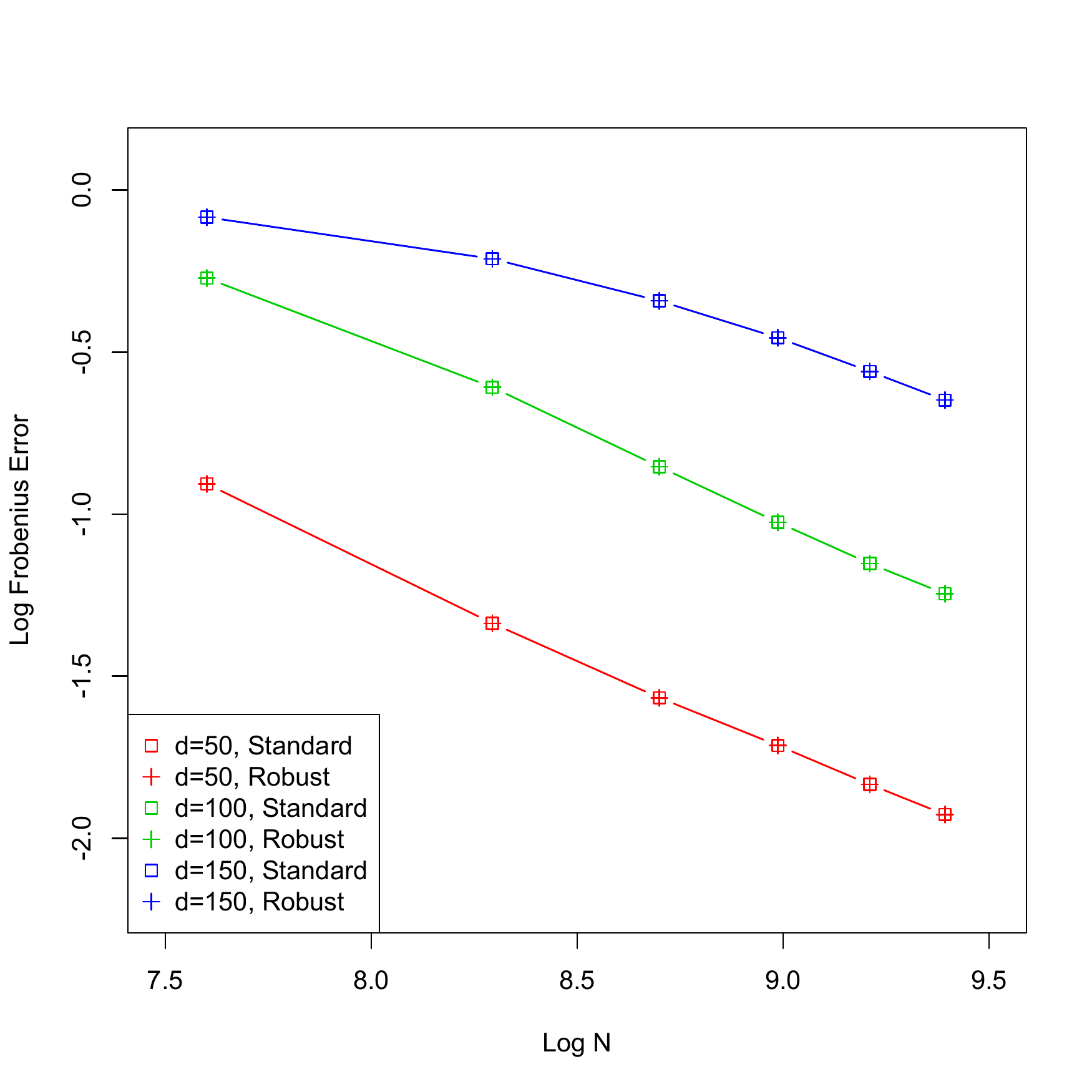} \\
			Log-normal Noise & Truncated Cauchy Noise & Gaussian Noise
		\end{tabular}
		\caption{Statistical errors of $\ln \fnorm{\widehat\bTheta- \bTheta^*}$ v.s. logarithmic sample size $\ln N$ for different dimensions $d$ in matrix completion.}
		\label{fig:2}
	\end{figure}

To solve the matrix completion problem in \eqref{eq:3.6}, we adapt the ADMM method inspired by \cite{FLT15}. They propose to recover the matrix by minimizing the square loss plus both nuclear norm and matrix max-norm penalizations under the entrywise max-norm constraint. By simply setting the penalization coefficient for the matrix max-norm to be zero, we can apply their algorithm to our problem. Let $\bL, \bR, \bW  \in \RR^{(d_1+d_2)\times (d_1+d_2)}$, which are variables in our algorithm. Define $\bTheta^n, \bTheta^s \in \RR^{d_1\times d_2}$ such that $\bTheta^n_{ij}=\sum\limits_{t=1}^N \ind_{\{\bX_t=\be_i\be_j^T\}}$ and $\bTheta^s_{ij}=\sum\limits_{t=1}^N Y_t \ind_{\{\bX_t=\be_i\be_j^T\}}$, so $\bTheta^n$ and $\bTheta^s$ are constants given the data. Below we present the iteration scheme of our algorithm to solve the matrix completion problem \eqref{eq:3.6}. Readers who are interested in the derivation of the algorithm can refer to \cite{FLT15} for the technical details.
	
	\beq
		\left \{
		\begin{aligned}
			& \bL^{(k+1)}=\Pi_{\cS^{d_1+d_2}_+}\{\bR^{(k)}-\rho^{-1}(\bW^{(k)}+\lambda_N \bI)\}, \\
			& \bC= \left(
				\begin{array}{cc}
					\bC^{11} & \bC^{12} \\
					\bC^{21} & \bC^{22}
				\end{array}
			\right)=\bL^{(k+1)}+\bW^{(k)}/\rho, \\
			& \bR^{12}_{ij}=\Pi_{[-R, R]} \{(\rho\bC^{12}_{ij}+2\bTheta^s_{ij}/N)/(\rho+2\bTheta^n_{ij}/N)\},\ 1\le i\le d_1, 1\le j\le d_2\\
			& \bR^{(k+1)}= \left(
				\begin{array}{cc}
					\bC^{11} & \bR^{12} \\
					(\bR^{12})^T & \bC^{22}
				\end{array}
			\right), \\
			& \bW^{(k+1)}=\bW^{(k)}+\gamma\rho(\bL^{(k+1)}-\bR^{(k+1)}).
		\end{aligned}
		\right.
	\eeq
	In the algorithm above, $\Pi_{\cS_+^{d_1+d_2}}(\cdot)$ is the projection operator onto the space of positive semidefinite matrices $\cS^{d_1+d_2}_+$, $\rho$ is the penalization parameter which we set to be $0.1$ in our simulation and $\gamma$ is the step length which is typically set to be $1.618$ according to \cite{FLT15}. We omit the stopping criteria for this algorithm here since it is complex. Readers who are interested in the implementation details of this algorithm can refer to \cite{FLT15} for detailed instruction. Once the stopping criteria is satisfied, the algorithm returns $\bR^{12}$ as the final estimator of $\bTheta^*$.

	\subsection{Multi-Tasking Regression}
	
 	We again set $d_1=d_2=d$ and construct $\bTheta^*$ to be $\sum\limits_{i=1}^5 \bv_i\bv_i^T$, where $\bv_i$ is the $i$th top eigevector of the sample covariance of $100$ i.i.d centered Gaussian random vectors with covariance $\bI_d$. The design vectors $\{\bx_i\}_{i=1}^n$ are i.i.d. Gaussian vectors with covariance matrix $\bI_d$. The noise distributions are characterized as follows:
	\begin{itemize}
		\item Log-normal: $\epsilon_i \overset{d}{=} (Z-\E Z)/50$, where $Z \sim ln \cN(0, \sigma^2)$ and $\sigma^2=4$;
		\item Truncated Cauchy: $\epsilon_i \overset{d}{=} \min(Z, 10^4)/10$, where $Z$ follows Cauchy distribution;
		\item Gaussian: $\epsilon_i \sim N(0, \sigma^2)$, where $\sigma^2=0.25$.
	\end{itemize}
	We present the numerical results in Figure \ref{fig:3}. Similar to the two examples before, the robust procedure has sharper statistical accuracy in estimating $\bTheta^*$ than the standard procedure under both heavy-tailed noises. Under Gaussian noise, the robust procedure has nearly the same statistical performance as the standard procedure.
	
		\begin{figure}
		\begin{tabular}{ccc}
			\hspace{-.3in}\includegraphics[height=3in, width=2.4in]{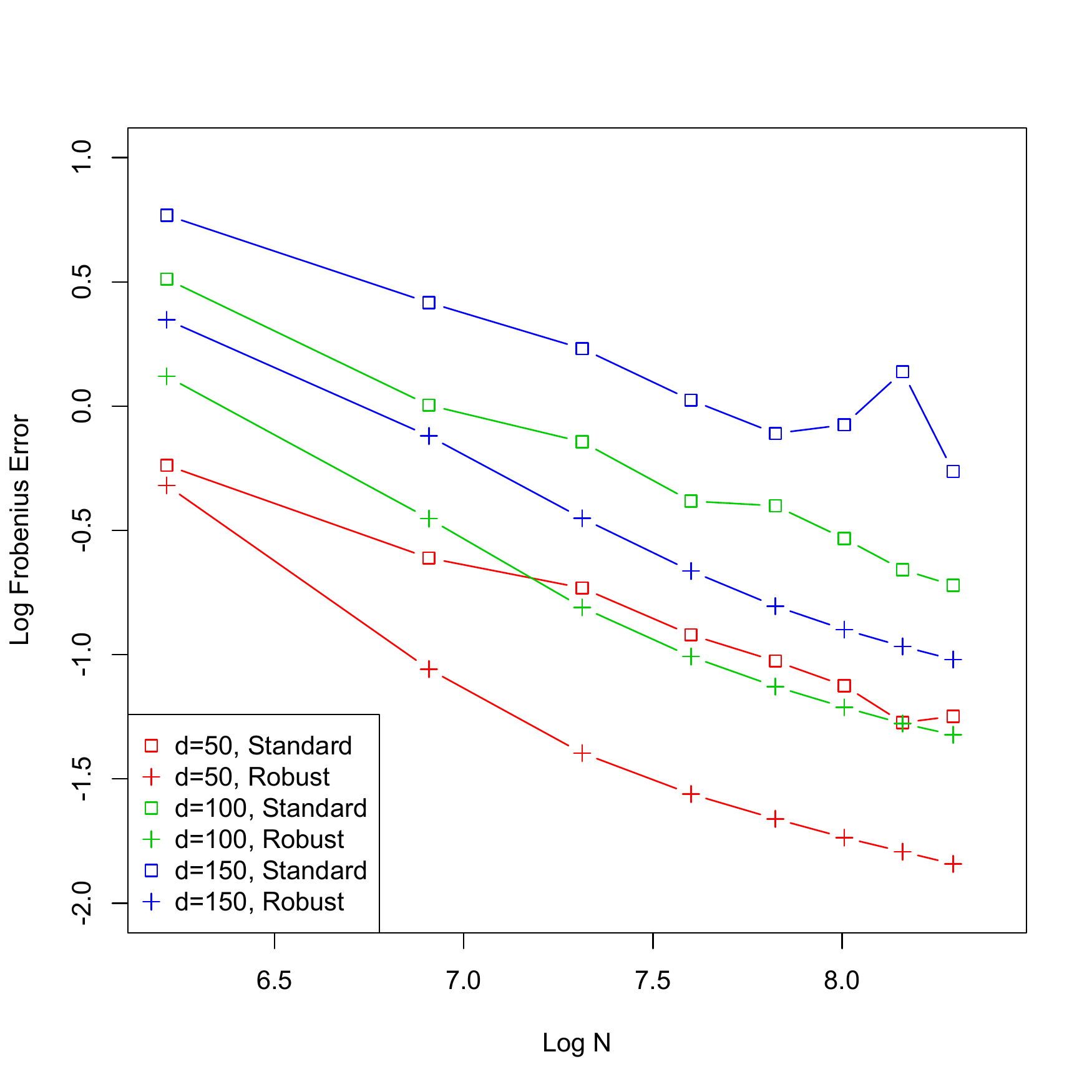} & \hspace{-.2in}\includegraphics[height=3in, width=2.4in]{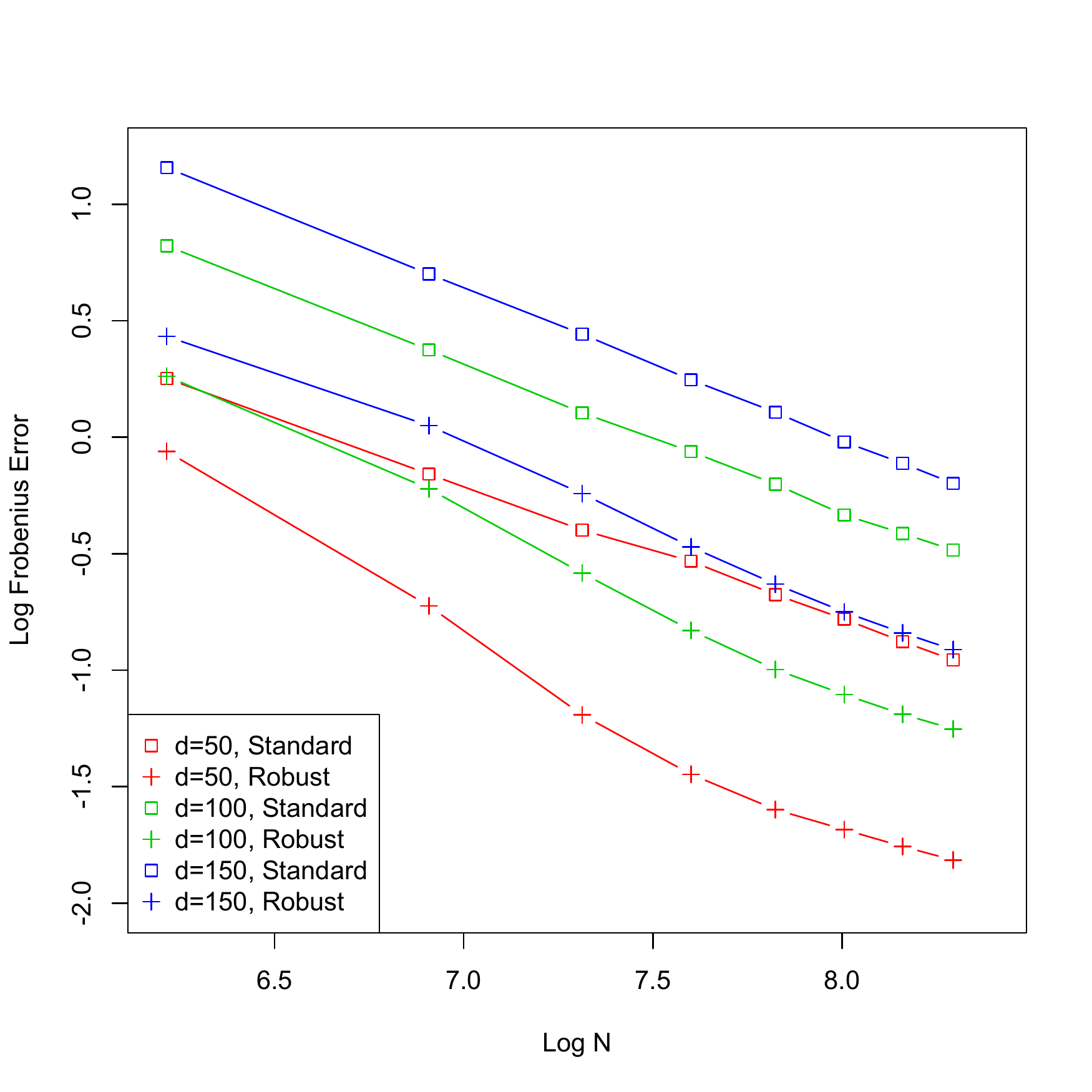} & \hspace{-.25in}\includegraphics[height=3in, width=2.4in]{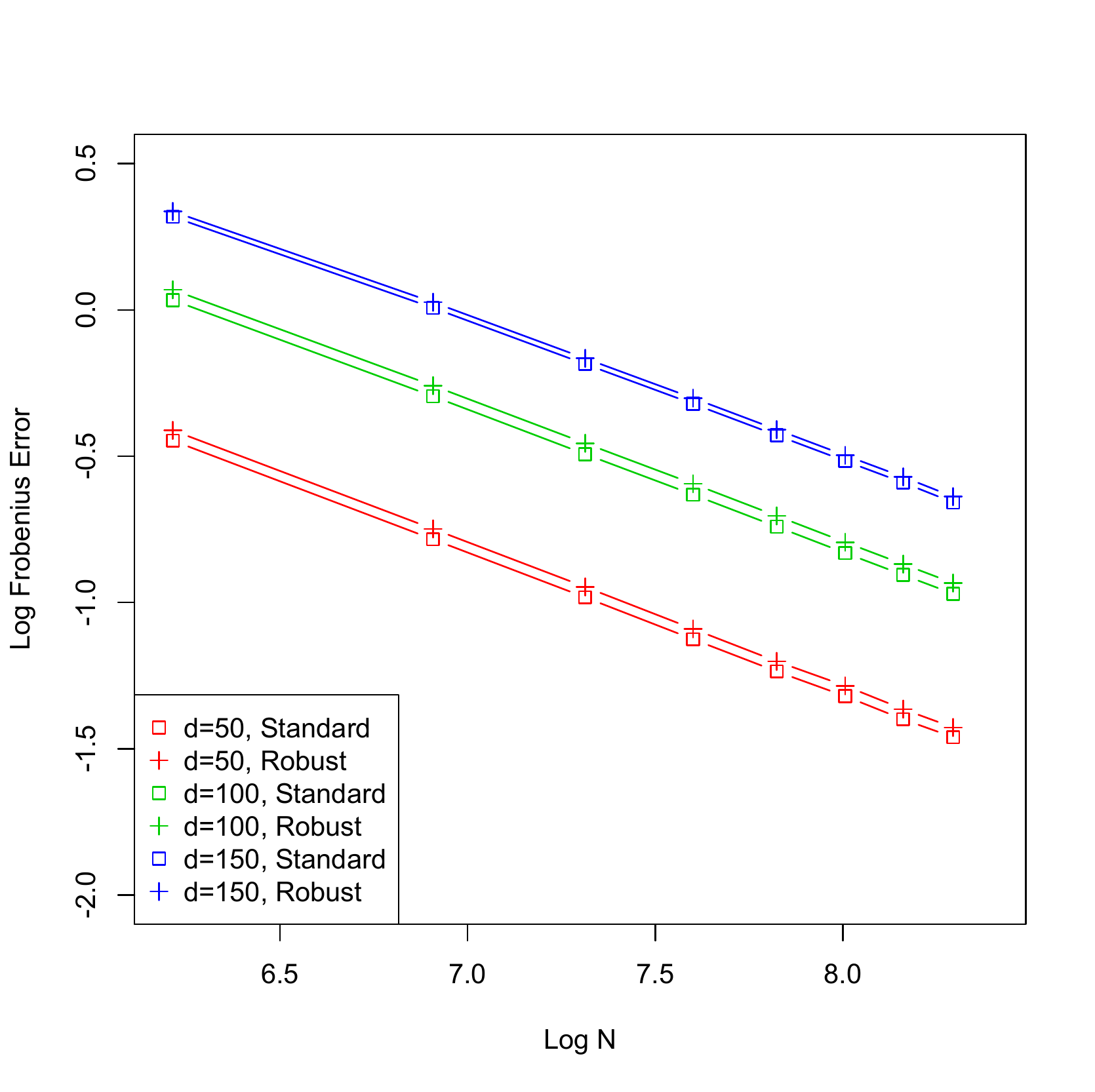} \\
			Log-normal Noise & Truncated Cauchy Noise & Gaussian Noise
		\end{tabular}
		\caption{Statistical errors of $\ln \fnorm{\widehat\bTheta- \bTheta^*}$ v.s. logarithmic sample size $\ln N$ for different dimensions $d$ in multi-tasking regression.}
		\label{fig:3}
	\end{figure}
	
For multi-tasking regression, we exploit the contractive PRSM method again. Let $\bbX$ be the $n$-by-$d_1$ design matrix and $\bbY$ be the $n$-by-$d_2$ response matrix. By following the general iteration scheme \eqref{eq:5.2}, we can develop the iteration steps for the multi-tasking regression.
	
	\beq
		\left\{
		\begin{aligned}
		& \bTheta_x^{(k+1)}=(2\bbX^T\bbX/n+ \beta \cdot \bI )^{-1}(\beta\cdot \bTheta_y^{(k)}+\brho^{(k)} + 2\bbX^T\bbY/n), \\
		& \brho^{(k+\frac{1}{2})}= \brho^{(k)}-\alpha\beta(\bTheta_x^{(k+1)}-\bTheta_y^{(k)}), \\
		& \bTheta_y^{(k+1)}=S_{\lambda/\beta}(\bTheta_x-\brho^{(k+\frac{1}{2})}), \\
		& \brho^{(k+1)}= \brho^{(k+\frac{1}{2})}-\alpha\beta(\bTheta_x^{(k+1)}-\bTheta_y^{(k+1)}),
		\end{aligned}
		\right.
	\eeq
	where $\bTheta_x, \bTheta_y, \brho \in \RR^{d_1\times d_2}$ and $S_\tau(\cdot)$ is the same singular value soft thresholding function as in \eqref{eq:csiter}. Analogous to the compressed sensing, we choose $\alpha=0.9$ and $\beta=1$. As long as $\fnorm{\bTheta_x- \bTheta_y}$ is smaller than some predetermined threshold, the iteration stops and the algorithm returns $\bTheta_y$ as the final estimator of $\bTheta^*$.

	\subsection{Covariance Estimation}
	
	In this subsection, we investigate the statistical error of the sample covariance $\overline\bSigma_n$ and shrinkage sample covariance $\widetilde\bSigma_n(\tau)$ proposed in Section \ref{sec4} when the random samples are heavy-tailed. We only consider two simple distributions: Gaussian and Student's $t_3$ random samples. The dimension is set to be proportional to sample size, i.e., $d/n = \alpha$ with $\alpha$ being $0.2,0.5,1$. $n$ will range from $100$ to $500$ for each case. Regardless of how large the dimension $d$ is, the true covariance $\bSigma$ is always set to be a diagonal matrix with the first diagonal element equal to $4$ and all the other diagonal elements equal to $1$. 	
	We present our results in Figure \ref{fig:4}. The statistical errors are measured in terms of the spectral norm gap between the estimator and true covariance, and our simulation is based on $1,000$ independent Monte Carlo replications.
		
	\begin{figure}[h]
		\centering
		\begin{tabular}{cc}
			\includegraphics[height=3in, width=3in]{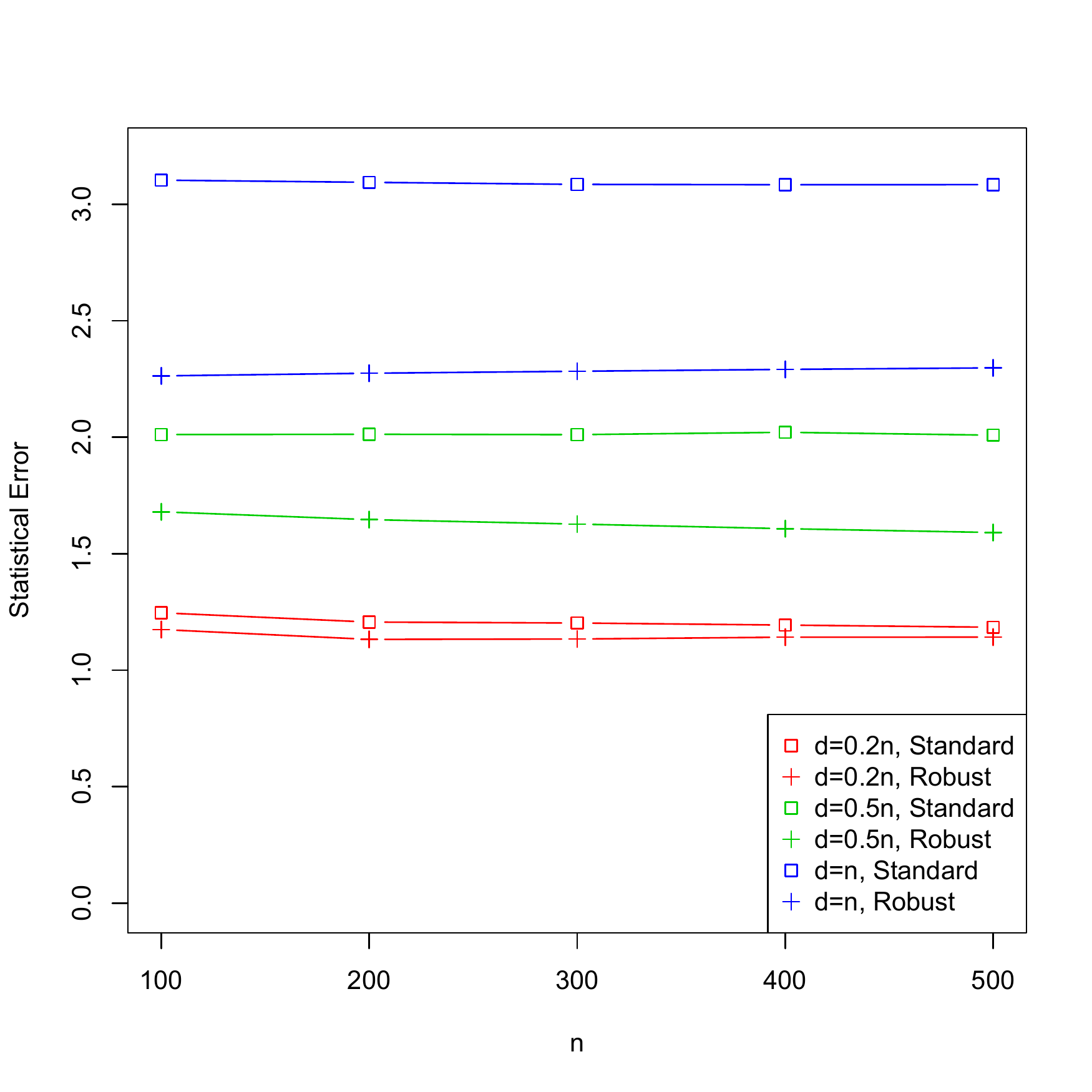} & \includegraphics[height=3in, width=3in]{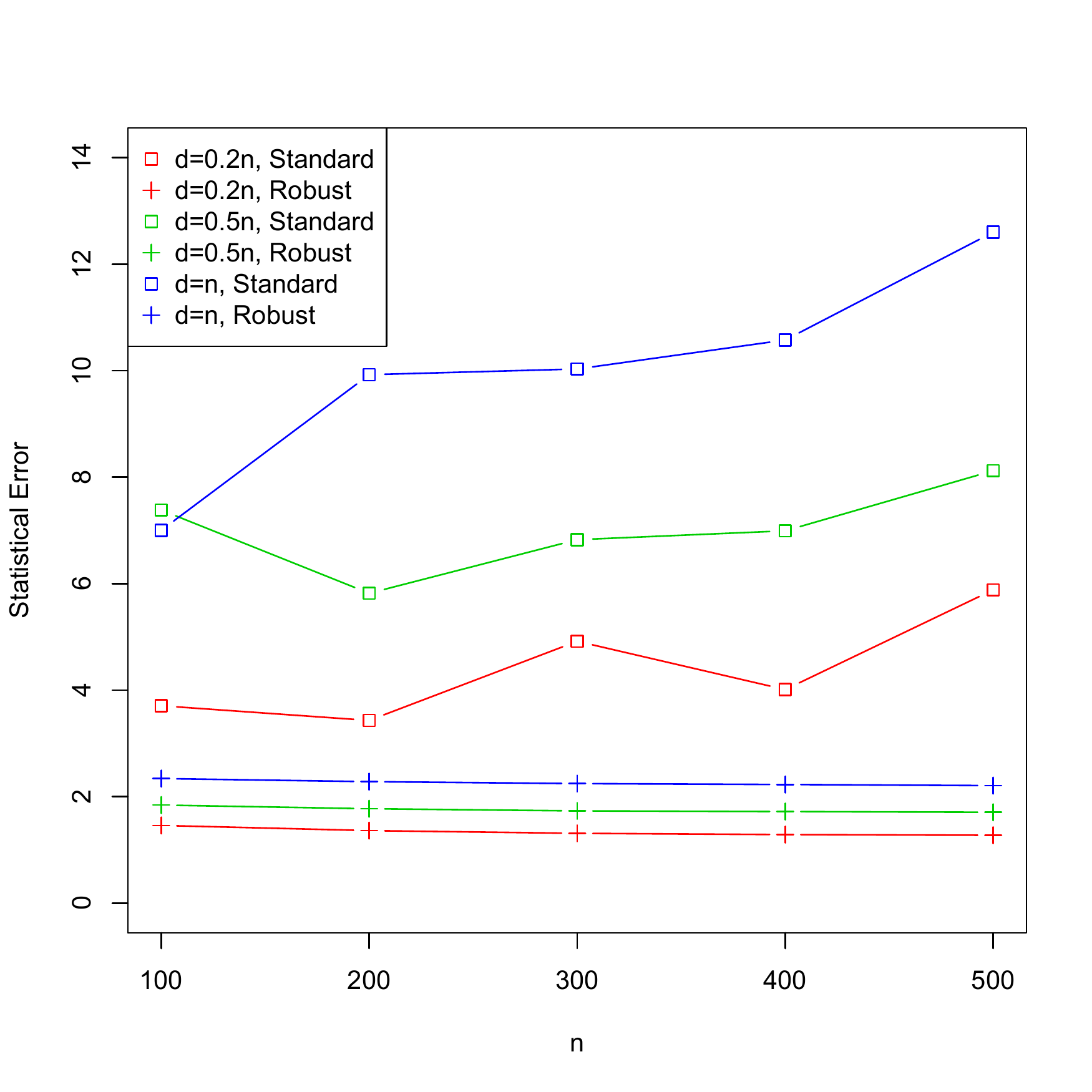} \\
			 Gaussian Samples & $t_3$ Samples
		\end{tabular}
		\caption{Statistical errors of $\opnorm{\widehat\bSigma-\bSigma}$ v.s. sample size $n$ for different dimensions $d$ in covariance estimation.}
		\label{fig:4}
	\end{figure}
	
	As we can see, for Gaussian samples, as long as we fix $d/n$, the statistical error of both $\overline\bSigma_n$ and $\widetilde\bSigma_n(\tau)$ does not change, which is consistent with Theorem 5.39 in \cite{Ver10} and Theorem 6 in our paper. Also, the higher the dimension is, the more significant the superiority of $\widetilde\bSigma_n(\tau)$ is over $\overline\bSigma_n$. This validates our remark after Theorem 6 that the shrinkage ameliorates the impact of dimensionality. Even for Gaussian data, shrinkage is meaningful and provides significant improvement.
	 For $t_3$ distribution, since it is heavy-tailed, the regular sample covariance does not maintain constant statistical error for a fixed $d/n$; instead the error increases as the sample size increases. In contrast, our shrinkage sample covariance still retains stable statistical error and enjoys much higher accuracy than the regular sample covariance. This strongly supports the sharp statistical error rate we derived for $\widetilde\bSigma_n(\tau)$ in Theorem 6.

	\bibliographystyle{ims}
	\bibliography{RobustBib}
	
	\appendix
	\section{Proofs}

	\begin{proof}[\underline{Proof of Theorem \ref{thm:1}}]
		This Lemma is just a simple application of the theoretical framework established in \cite{NRW12}, but for completeness and clarity, we present the whole proof here. We start from the optimality of $\widehat\bTheta$:
		\[
			-\inn{\widehat\bSigma_{Y\bX}, \widehat\bTheta}+\frac{1}{2}\vec(\widehat\bTheta)^T\widehat\bSigma_{\bX\bX}\vec(\widehat\bTheta)+\lambda_N\nnorm{\widehat\bTheta}\le -\inn{\widehat\bSigma_{Y\bX}, \bTheta^*}+\frac{1}{2}\vec(\bTheta^*)^T\widehat\bSigma_{\bX\bX}\vec(\bTheta^*)+\lambda_N\nnorm{\bTheta^*}.
		\]
		Note that $\widehat\bDelta=\widehat\bTheta-\bTheta^*$. Simple algebra delivers that
		\beq
			\label{eq:lm1start}
			\begin{aligned}
				\frac{1}{2}\vec(\widehat\bDelta)^T\widehat\bSigma_{\bX\bX}\vec(\widehat\bDelta) & \le \inn{\widehat\bSigma_{Y\bX} - \mat(\widehat\bSigma_{\bX\bX}\vec(\bTheta^*)), \widehat\bDelta}+\lambda_N\nnorm{\widehat\bDelta} \\
				& \le \opnorm{\widehat\bSigma_{Y\bX} - \mat(\widehat\bSigma_{\bX\bX}\vec(\bTheta^*))}\cdot\nnorm{\widehat\bDelta}+\lambda_N\nnorm{\widehat\bDelta} \le 2\lambda_N\nnorm{\widehat\bDelta},
			\end{aligned}
		\eeq
	if $\lambda_N\ge 2\opnorm{\widehat\bSigma_{Y\bX} - \mat(\widehat\bSigma_{\bX\bX}\vec(\bTheta^*))}$. To bound the RHS of (\ref{eq:lm1start}), we need to decompose $\widehat\bDelta$ as \cite{NWa11} did. Let $\bTheta^*=\bU\bD\bV^T$ be the SVD of $\bTheta^*$, where the diagonals of $\bD$ are in the decreasing order. Denote the first $r$ columns of $\bU$ and $\bV$ by $\bU^r$ and $\bV^r$ respectively, and define
	\beq
		\begin{aligned}
			& \cM:=\{\bTheta\in \Rdd\ |\ \text{row}(\bTheta)\subseteq col(\bV^r), \text{col}(\bTheta)\subseteq col(\bU^r)\}, \\
			& \overline{\cM}^{\perp}:=\{\bTheta\in \Rdd\ |\ \text{row}(\bTheta)\perp col(\bV^r), \text{col}(\bTheta)\perp col(\bU^r)\},
		\end{aligned}
	\eeq
	where $col(\cdot)$ and $row(\cdot)$ denote the column space and row space respectively. For any $\bDelta\in \Rdd$ and Hilbert space $\cW\subseteq\Rdd$, let $\bDelta_{\cW}$ be the projection of $\bDelta$ onto $\cW$. We first clarify here what $\bDelta_\cM$, $\bDelta_{\overline\cM}$ and $\bDelta_{\overline\cM^{\perp}}$ are. Write $\bDelta$ as
	\[
		\bDelta=[\bU^r, \bU^{r^\perp}]\left[
			\begin{array}{cc}
				\bGamma_{11} & \bGamma_{12} \\
				\bGamma_{21} & \bGamma_{22}
			\end{array} \right] [\bV^r, \bV^{r^\perp}]^T,
	\]
	 then the following equalities hold:
	 \beq
	 	\begin{aligned}
			\bDelta_{\cM} =\bU^r\bGamma_{11}{(\bV^r)}^T,\quad\bDelta_{\overline\cM^\perp} = \bU^{r^\perp} \bGamma_{22}(\bV^{r^\perp})^{T}, \quad \bDelta_{\overline\cM} = [\bU^r, \bU^{r^\perp}]\left[
			\begin{array}{cc}
				\bGamma_{11} & \bGamma_{12} \\
				\bGamma_{21} & \bzero
			\end{array} \right] [\bV^r, \bV^{r^\perp}]^T.
		\end{aligned}
	 \eeq
	Applying Lemma 1 in \cite{NRW12} to our new loss function implies that if $\lambda_N \ge 2\opnorm{\widehat\bSigma_{Y\bX} - \mat(\widehat\bSigma_{\bX\bX}\vec(\bTheta^*))}$, it holds that
		\beq
			\label{eq:cone}
			\nnorm{\widehat\bDelta_{\overline\cM^{\perp}}}\le 3\nnorm{\widehat\bDelta_{\overline\cM}}+ 4\sum\nolimits_{j\ge r+1} \sigma_j(\bTheta^*).
		\eeq
		Note that $\rank(\widehat\bDelta_{\overline\cM})\le 2r$; we thus have
		\beq
			\label{eq:6.5}
			\nnorm{\widehat\bDelta}\le \nnorm{\widehat\bDelta_{\overline\cM}}+\nnorm{\widehat\bDelta_{\overline\cM^{\perp}}}\le 4\nnorm{\widehat\bDelta_{\overline\cM}}+4\sum_{j\ge r+1}\sigma_j(\bTheta^*)\le 4\sqrt{2r}\fnorm{\widehat\bDelta}+4\sum_{j\ge r+1}\sigma_j(\bTheta^*).
		\eeq
		Following the proof of Corollary 2 in \cite{NWa11}, we determine the value of $r$ here. For a threshold $\tau>0$, we choose
		\[
			r=\# \{j \in \{1, 2, ..., (d_1\wedge d_2)\} | \sigma_j(\bTheta^*) \ge \tau\}.
		\]
		Then it follows that
		\beq
			\label{eq:6.6}
			\sum\limits_{j \ge r+1} \sigma_j(\bTheta^*)\le \tau \sum\limits_{j \ge r+1} \frac{\sigma_j(\bTheta^*)}{\tau} \le \tau \sum\limits_{j \ge r+1} \bigl(\frac{\sigma_j(\bTheta^*)}{\tau}\bigr)^q \le \tau^{1-q}\sum\limits_{j \ge r+1}\sigma_j(\bTheta^*)^q \le \tau^{1-q}\rho.
		\eeq
		On the other hand, $\rho \ge \sum\limits_{j \le r} \sigma_j(\bTheta^*)^q \ge r\tau^q$, so $r \le \rho\tau^{-q}$. Combining (\ref{eq:lm1start}), (\ref{eq:6.5}) and $\vec(\widehat\bDelta)^T\widehat\bSigma_{\bX\bX}\vec(\widehat\bDelta) \ge \kappa_{\cL}\fnorm{\widehat \bDelta}^2$, we have
		\[
			\frac{1}{2}\kappa_{\cL} \fnorm{\widehat\bDelta}^2 \le 2\lambda_N(4\sqrt{2r}\fnorm{\widehat\bDelta}+4\tau^{1-q}\rho),
		\]
		which implies that
		\[	
			\fnorm{\widehat\bDelta}\le 4\sqrt{\frac{\lambda_N\rho}{\kappa_{\cL}}}\Bigl(\sqrt{\frac{32\lambda_N\tau^{-q}}{\kappa_{\cL}}}+\sqrt{\tau^{1-q}}\Bigr).
		\]
		Choosing $\tau=\lambda_N/\kappa_{\cL}$, we have for some constant $C_1$,
		\[
			\fnorm{\widehat\bDelta} \le C_1\sqrt{\rho}\Bigl(\frac{\lambda_N}{\kappa_{\cL}}\Bigr)^{1-\frac{q}{2}}.
		\]
		Combining this result with \eqref{eq:6.5} and \eqref{eq:6.6},  we can further derive the statistical error rate in terms of the nuclear norm as follows.
		\[
			\nnorm{\widehat\bDelta} \le C_2\rho\Bigl(\frac{\lambda_N}{\kappa_{\cL}}\Bigr)^{1-q},
		\]
		where $C_2$ is certain positive constant.
		\end{proof}

	\begin{proof}[\underline{Proof of Lemma \ref{lem1}}]
	
	(a) We first prove for the case of the sub-Gaussian design. Recall that we use $\widetilde Y_i$ to denote $sgn(Y_i)(|Y_i| \wedge \tau)$ and $\widetilde\bx_i = \bx_i$ in this case. Let $\hat\sigma_{x_j\widetilde Y}(\tau) = \frac{1}{N}\sum\nolimits_{i=1}^N \widetilde Y_i x_{ij}$. Note that
	\beq
		\label{eq:lem1var}
		\begin{aligned}	
			\Var(\widetilde Y_i x_{ij}) & \le \E (\widetilde Y_i^2x_{ij}^2) \le \E (Y_i^2x_{ij}^2) \le (\E Y_i^{2k})^{\frac{1}{k}}(\E x_{ij}^{\frac{2k}{k-1}})^{\frac{k-1}{k}} \le 2M^{\frac{1}{k}}\kappa_0^2k/(k-1)< \infty\,,
		\end{aligned}
	\eeq
	which is a constant that we denote by $v_1$. In addition,  for $p>2$,
	\[
		\begin{aligned}
		 	\E |\widetilde Y_i x_{ij}|^p & \le \tau^{p-2}\E (Y_i^2|x_{ij}|^p) \le \tau^{p-2}M^{\frac{1}{k}}\bigl(\E |x_{ij}|^{\frac{kp}{k-1}}\bigr)^{\frac{k-1}{k}}  \le \tau^{p-2}M^{\frac{1}{k}}\Bigl(\kappa_0\sqrt{\frac{kp}{k-1}}\Bigr)^p.
		\end{aligned}
	\]
	By the Jensen's inequality and then Stirling approximation, it follows that for some constants $c_1$ and $c_2$,
	\[
		\begin{aligned}
			\E |\widetilde Y_ix_{ij}- \E \widetilde Y_ix_{ij}|^p & \le 2^{p-1}(\E |\widetilde Y_i x_{ij}|^p + |\E \widetilde Y_ix_{ij}|^{p}) \le 2^{p-1}(\E | Y_i x_{ij}|^p + (\E |Y_ix_{ij}|)^{p}) \\
			& \le \frac{c_1p!(c_2\tau)^{p-2}}{2}.
		\end{aligned}
	\]
	 Define $v:= c_1 \vee v_1$. According to Bernstein's Inequality (Theorem 2.10 in \cite{BLM13}), we have for $j=1,...,d$,
	\[
		P \Big(|\widehat\sigma_{x_j\widetilde Y}(\tau)-\E(\widetilde Y_ix_{ij})|\ge \sqrt{\frac{2vt}{N}}+\frac{c_2\tau t}{N}\Big)\le 2\exp(-t).
	\]
	Also note that by Markov's inequality,
	\[
		\E ((\widetilde Y_i-Y_i)x_{ij})  \le \E (|Y_ix_{ij}|\cdot \ind_{|Y_i|>\tau}) \le \sqrt{\E(Y_i^2 x_{ij}^2)P(|Y_i|>\tau)} \le \sqrt{\frac{v\E Y_i^2}{\tau^2}} \le \frac{\sqrt{vM^{\frac{1}{k}}}}{\tau}.
	\]
	Note that since $\Var(Y_i)= \bbeta^{*T}\bSigma_{\bx\bx}\bbeta^*+ \E \epsilon^2 \le M^{\frac{1}{k}}$ and $\lambda_{\min}(\bSigma_{\bx\bx})\ge \kappa_{\cL}$, $\ltwonorm{\bbeta^*} \le M^{\frac{1}{k}}/\kappa_{\cL}$. Therefore, $\|\bx_i^T\bbeta^*\|_{\psi_2}\le M^{\frac{1}{k}}\kappa_0/\kappa_{\cL}$ and $\|x_{ij}\bx_i^T\bbeta^*\|_{\psi_1}\le 2M^{\frac{1}{k}}\kappa_0^2/\kappa_{\cL}$. By Proposition 5.16 (Bernstein-type inequality) in \cite{Ver10}, we have for sufficiently small $t$, 
	\[
		P\Big(|(\hat\bSigma_{x_j\bx})^T\bbeta^*-\E(Y_ix_{ij})|\ge c_2\sqrt{\frac{t}{N}}\Big)\le \exp(-t)
	\]
	for some constant $c_2$. Choose $\tau \asymp \sqrt{N/\log d}$. An application of the triangle inequality and the union bound yields that as long as $\log d/N < \gamma_1$ for certain $\gamma_1$, we have for some constant $\nu_1>0$,
	\[
		P\Big(  \supnorm{\widehat\bSigma_{Y\bx}(\tau)-\frac{1}{N}\sum\limits_{i=1}^N (\bx_i^T\btheta^*)\bx_i} \ge \nu_1\sqrt{\frac{\delta\log d}{N}}\Big)\le 2d^{1-\delta}.
	\]
	
(b) Now we switch to the case where both the noise and the design have only bounded moments.  Note that
		\[
			\begin{aligned}
				\supnorm{\widehat\bSigma_{\widetilde Y\widetilde \bx}- \widehat\bSigma_{\widetilde\bx\widetilde\bx}\btheta^*} & \le \supnorm{\widehat\bSigma_{\widetilde Y\widetilde \bx}- \bSigma_{\widetilde Y \widetilde \bx}}+ \supnorm{\bSigma_{\widetilde Y \widetilde \bx}- \bSigma_{Y\bx}}+ \supnorm{\bSigma_{Y\bx}- \bSigma_{\widetilde\bx\widetilde\bx}\btheta^*} \\
				& + \supnorm{(\widehat\bSigma_{\widetilde\bx\widetilde\bx}- \bSigma_{\widetilde\bx\widetilde\bx})\btheta^*}= T_1+T_2+T_3+T_4.
			\end{aligned}
		\]
		We bound the four terms one by one. For $1\le j\le d$, analogous to \eqref{eq:lem1var},
		\[	
			\begin{aligned}
				\Var(\widetilde Y_i\widetilde x_{ij}) & \le \E ( \widetilde Y_i \widetilde x_{ij})^2 \le \E (Y_ix_{ij})^2 \le \sqrt{\E Y_i^4 \E x_{ij}^4} =:v_1< \infty.
			\end{aligned}
		\]
		In addition, $\E |\widetilde Y_i\widetilde x_{ij}|^p \le (\tau_1\tau_2)^{p-2}v_1$. Therefore according to Bernstein's Inequality (Theorem 2.10 in \cite{BLM13}), we have
		\[
			P\Bigl(|\widehat\sigma_{\widetilde Y\widetilde x_j}- \sigma_{\widetilde Y\widetilde x_j}|\ge \sqrt{\frac{2v_1t}{N}}+\frac{c\tau_1\tau_2 t}{N} \Bigr)\le \exp(-t),
		\]
		where $\widehat\sigma_{\widetilde Y\widetilde x_j}= \frac{1}{N} \sum\nolimits_{i=1}^N \widetilde Y_i\widetilde x_{ij}$, $\sigma_{\widetilde Y\widetilde x_j}= \E \widetilde Y_i\widetilde x_{ij}$ and $c$ is certain constant. Then by the union bound, we have
		\[
			P \Bigl(|T_1|> \sqrt{\frac{2v_1t}{N}}+ \frac{c\tau_1\tau_2t}{N} \Bigr) \le d\exp(-t).
		\]
		Next we bound $T_2$. Note that for $1 \le j \le d$,
		\[
			\begin{aligned}
				\E \widetilde Y_i\widetilde x_{ij}- \E Y_ix_{ij} & = \E \widetilde Y_i\widetilde x_{ij}- \E \widetilde Y_i x_{ij}+ \E \widetilde Y_i x_{ij}- \E Y_i x_{ij}= \E \widetilde Y_i(\widetilde x_{ij}- x_{ij})+ \E (\widetilde Y_i-Y_i)x_{ij} \\
				& \le  \sqrt{\E \bigl(Y_i^2(\widetilde x_{ij}- x_{ij})^2\bigr) P(|x_{ij}|\ge \tau_2) }+ \sqrt{\E \bigl((\widetilde Y_i- Y_i)^2 x_{ij}^2\bigr) P(|Y_i|\ge \tau_1)} \\
				& \le \sqrt{Mv_1}\Big(\frac{1}{\tau_2^2}+ \frac{1}{\tau_1^2}\Big),
			\end{aligned}
		\]
		which delivers that $T_2\le \sqrt{Mv_1}(1/\tau_1^2+ 1/\tau_2^2)$. Then we bound $T_3$. For $1\le j \le d$,
		\[
					\begin{aligned}
			\E & \bigl((\widetilde \bx_i^T\btheta^*)\widetilde x_{ij}\bigr)- \E\bigl((\bx_i^T\btheta^*)x_{ij}\bigr) = \sum\limits_{k=1}^d \E \bigl(\theta^*_k(\widetilde x_{ik}\widetilde x_{ij}- x_{ik}x_{ij})\bigr )\le \sum\limits_{k=1}^d |\theta^*_k|\E |\widetilde x_{ij}\widetilde x_{ik}- x_{ij}x_{ik}| \\
			& \le \sum\limits_{k=1}^d |\theta^*_k| \bigl(\E|  x_{ij}(\widetilde x_{ik}- x_{ik})|+ \E |(\widetilde x_{ij}- x_{ij})x_{ik}|\bigr )  \\
			& \le \sum\limits_{k=1}^d |\theta^*_k| \Bigl(\E \bigl(|x_{ij}(\widetilde x_{ik}- x_{ik})|\ind_{\{|x_{ik}|>\tau_2\}}\bigr)+ \E \bigl(|(\widetilde x_{ij}- x_{ij})x_{ik}|\ind_{\{|x_{ij}|>\tau_2\}}\bigr)\Bigr ) \le \frac{CR}{\tau_2^2}.
			\end{aligned}
		\]
		Finally we bound $T_4$. For $1\le j,k\le d$, we have $|\widetilde x_{ik}\widetilde x_{ij}| \le \tau_2^2$ and $\Var( \widetilde x_{ij}\widetilde x_{ik})\le M=: v_2$. Therefore according to Bernstein's inequality (Theorem 2.10 in \cite{BLM13}),
		\beq
			\label{eq:6.8}
			P\Bigl(|\widehat\sigma_{\widetilde x_j\widetilde x_k}- \sigma_{\widetilde x_j \widetilde x_k}|\ge \sqrt{\frac{2v_2t}{N}}+\frac{c\tau_2^2 t}{N} \Bigr)\le \exp(-t),
		\eeq
		where $\widehat\sigma_{\widetilde x_j\widetilde x_k}= (1/N)\sum\nolimits_{i=1}^N \widetilde x_{ij}\widetilde x_{ik}$, $\sigma_{\widetilde x_j\widetilde x_k}= \E \widetilde x_{ij}\widetilde x_{ik}$ and $c$ is certain constant. We therefore have
		\[
			P\Bigl(|T_4|\ge R\bigl(\sqrt{\frac{2v_2t}{N}}+\frac{c\tau_2^2 t}{N}\bigr) \Bigr)\le d^2\exp(-t).
		\]
		Now we choose $\tau_1, \tau_2\asymp (N / \log d)^{1/4}$, then it follows that for some constant $\nu_2>0$,
		\[
			P\Bigl(\|\widehat\bSigma_{\widetilde Y\widetilde \bx}- \widehat\bSigma_{\widetilde \bx\widetilde \bx}\btheta^*\|_{\max}>\nu_2\sqrt{\frac{\delta\log d}{N}} \Bigr)\le 2d^{1-\delta}.
		\]
	\end{proof}

	\begin{proof}[\underline{Proof of Lemma \ref{lem:2}}]
		{\bf (a)} We first consider the sub-Gaussian design. Again, since $\widetilde \bx_i=\bx_i$ in this case, we do not add any tilde above $\bx_i$ in this proof. Define $\widehat \bD_{\bx\bx}$ to be the diagonal of $\widehat\bSigma_{\bx\bx}$. Note that given $\|\bx_i\|_{\psi_2} \le \kappa_0$ and $\lambda_{\min}(\bSigma_{\bx\bx})\ge \kappa_{\cL}>0$, it follows that $$\forall \bv\in \cS^{d-1}, \sqrt{\E (\bv^T\bx_i)^4} \le 4\kappa_0^2 \le (4\kappa_0^2/\kappa_{\cL})\bv^T\bSigma_{\bx\bx}\bv.$$ In addition, since $\|x_{ij}\|_{\psi_2}\le \kappa_0$, $(\E x_{ij}^8)^\frac{1}{4}\le 8\kappa_0^2\le (8\kappa_0^2/\kappa_{\cL})\E x_{ij}^2 $. According to Lemma 5.2 in \cite{Oli16}, for any $\eta_1>1$,
		\beq
			\label{eq:6.28}
			P\Bigl( \forall\ \bv \in \RR^d: \bv^T\widehat \bSigma_{\bx\bx}\bv\ge \frac{1}{2}\bv^T\bSigma_{\bx\bx}\bv-\frac{c_1(1+2\eta_1\log d)}{N}\lonenorm{\widehat\bD^{1/2}_{\bx\bx}\bv}^2  \Bigr)\ge 1-\frac{d^{1-\eta_1}}{3},
		\eeq
		where $c_1$ is some universal constant. For any $1\le j \le d$, we know that $\|x_{ij}\|_{\psi_1}\le 2\|x_{ij}\|_{\psi_2}^2= 2\kappa_0^2$, therefore by the Bernstein-type inequality we have for sufficiently small $t$,
		\[
			P\Bigl(\bigl | \frac{1}{N}\sum\limits_{i=1}^n x_{ij}^2- \E x_{ij}^2 \bigr |\ge t\Bigr)\le 2\exp( -cNt^2),
		\]
		where $c$ depends on $\kappa_0$. Note that $\E x_{ij}^2\le 2\kappa_0^2$. An application of the union bound delivers that when $t$ is sufficiently small,
		\[
			P(\opnorm{\widehat\bD_{\bx\bx}}\ge 2\kappa_0^2+t)\le 2d\exp(-cNt^2).
		\]
		Let $t<\kappa_0^2$, then the inequality above yields that for a new constant $C_2>0$,
		\[
			P(\opnorm{\widehat\bD_{\bx\bx}}\ge 3\kappa_0^2)\le 2d\exp(-C_2N).
		\]
		Combining the inequality above with \eqref{eq:6.28}, it follows for a new constant $C_1>0$ that
		\[
			P\Bigl( \forall\ \bv \in \RR^d: \bv^T\widehat \bSigma_{\bx\bx}\bv\ge \frac{1}{2}\bv^T\bSigma_{\bx\bx}\bv-\frac{C_1(1+2\eta_1\log d)}{N}\lonenorm{\bv}^2  \Bigr)\ge 1-\frac{d^{1-\eta_1}}{3}- 2d\exp(-C_2N).
		\]
		
		{\bf (b)} Now we switch to the case of designs with only bounded moments. We will show that the sample covariance of the truncated design has the restricted eigenvalue property. Recall that for $i=1,...,N, j=1,...,d$, $\widetilde x_{ij}=sgn(x_{ij})(|x_{ij}|\wedge \tau_2)$. Note that
		\[
			\bv^T\widehat \bSigma_{\widetilde \bx\widetilde \bx} \bv= \bv^T (\widehat\bSigma_{\widetilde \bx\widetilde \bx}- \bSigma_{\widetilde\bx\widetilde\bx}) \bv+ \bv^T(\bSigma_{\widetilde \bx\widetilde \bx}- \bSigma_{\bx\bx}) \bv+ \bv^T\bSigma_{\bx\bx} \bv.
		\]
		According to \eqref{eq:6.8}, we have for some $c_1>0$,
		\[
			P\Bigl(\supnorm{\widehat\bSigma_{\widetilde \bx\widetilde \bx}- \bSigma_{\widetilde\bx \widetilde \bx}}\ge \sqrt{\frac{2Mt}{N}}+\frac{c_1\tau_2^2 t}{N} \Bigr)\le d^2\exp(-t).
		\]
		Given $\tau_2\asymp (N/\log d)^{1/4}$, we have for some constant $c_2>0$ and any $\eta_2>2$,
		\[
			P\Bigl( \supnorm{\widehat\bSigma_{\widetilde \bx\widetilde \bx}- \bSigma_{\widetilde \bx\widetilde \bx}} \ge c_2\sqrt{\frac{\eta_2\log d}{N}}\Bigr) \le d^{2-\eta_2}.
		\]
		In addition, for any $1 \le j_1, j_2 \le d$, we have
		\[	
			\begin{aligned}
				\E x_{ij_1}x_{ij_2}-\E \widetilde x_{ij_1}\widetilde x_{ij_2} & \le \E \bigl(|x_{ij_1}x_{ij_2}|(\ind_{\{|x_{ij_1}| \ge \tau_2\}}+ \ind_{\{|x_{ij_2}| \ge \tau_2\}})\bigr) \\
				& \le \sqrt{M}(\sqrt{P(|x_{ij_1}|\ge \tau_2)}+ \sqrt{P(|x_{ij_2}|\ge \tau_2})) \\
				& \le \sqrt{M}\Bigl(\sqrt{\frac{\E x_{ij_1}^4}{\tau_2^4}}+ \sqrt{\frac{\E x_{ij_2}^4}{\tau_2^4}} \Bigr) \le c_3\sqrt{\frac{ \log d}{N}} \\
			\end{aligned}
		\]
		for some $c_3>0$, which implies that $\supnorm{\bSigma_{\bx\bx}- \bSigma_{\widetilde \bx\widetilde \bx}} = O_P(\sqrt{\log d/N})$. Therefore we have
		\[
			P\Bigl(\forall \bv\in \RR^d, \bv^T\widehat\bSigma_{\widetilde\bx \widetilde\bx}\bv \ge \bv^T\bSigma_{\bx\bx}\bv - (c_2\eta_2+ c_3)\sqrt{\frac{\log d}{N}}\lonenorm{\bv}^2 \Bigr) \le d^{2-\eta_2}.
		\]
		
	\end{proof}

	\begin{proof}[\underline{Proof of Theorem \ref{thm:lm}}]
		 Suppose $\ltwonorm{\widehat\bDelta} \ge c_1\sqrt{\rho}\bigl(\frac{\lambda_N}{\kappa_{\cL}}\bigr)^{1-\frac{q}{2}}$ for some $c_1>0$, then by (\ref{eq:6.5}) and (\ref{eq:6.6}),
		\beq	
			\label{eq:6.33}		
			\begin{aligned}
				\lonenorm{\widehat\bDelta} & \le  4\sqrt{2r}\ltwonorm{\widehat\bDelta}+4\tau^{1-q}\rho \le 4\sqrt{2}\kappa_{\cL}^{\frac{q}{2}}\sqrt{\rho}\lambda_N^{-\frac{q}{2}}\ltwonorm{\widehat\bDelta}+ 4\kappa_{\cL}^{q-1}\rho\lambda_N^{1-q} \\
				& \le (4\sqrt{2}+4c_1^{-1})\sqrt{\rho}\kappa_{\cL}^{\frac{q}{2}}\lambda_N^{-\frac{q}{2}}\ltwonorm{\widehat\bDelta}	.		
			\end{aligned}
		\eeq
		For any $\delta>1$, since $\lambda_N=2\nu \sqrt{\delta\log d/N}$, it is easy to verify that as long as $\rho(\log d/N)^{1-\frac{q}{2}}\le C_1$ for some constant $C_1$, we will have by \eqref{eq:6.33} and \eqref{eq:3.3} in Lemma \ref{lem:2}
		\[
			P\Bigl({\widehat\bDelta}^T\widehat \bSigma_{\bx\bx}{\widehat\bDelta}\ge \frac{\kappa_{\cL}}{4}\ltwonorm{\widehat\bDelta}^2\Bigr)\ge 1-d^{1-\delta}.
		\]
		An application of Theorem \ref{thm:1} delivers that for constants $c_2, c_3>0$,
		\[
			\ltwonorm{\widehat\bDelta}\le c_2\sqrt{\rho}\Bigl(\frac{\lambda_N}{\kappa_{\cL}}\Bigr)^{1-\frac{q}{2}} \quad \text{and} \quad \lonenorm{\widehat\bDelta} \le c_3\rho\Bigl(\frac{\lambda_N}{\kappa_{\cL}}\Bigr)^{\frac{1-q}{2}}.
		\]	
		When $\ltwonorm{\widehat\bDelta} \le c_1\sqrt{\rho}\bigl(\frac{\lambda_N}{\kappa_{\cL}}\bigr)^{1-\frac{q}{2}}$, we can still obtain the $\ell_1$ norm bound that $\lonenorm{\widehat\bDelta}\le c_4\rho\Bigl(\frac{\lambda_N}{\kappa_{\cL}}\Bigr)^{\frac{1-q}{2}}$ for some constant $c_4$ through \eqref{eq:6.5} and \eqref{eq:6.6}. Overall, we can achieve the conclusion for some constants $C_2$ and $C_3$ that with probability at least $1-3d^{1-\delta}$,
		\[
			\ltwonorm{\widehat\bDelta}^2 \le C_2\rho \Bigl(\frac{\delta \log d}{N}\Bigr)^{1-\frac{q}{2}} \quad \text{and} \quad \lonenorm{\widehat\bDelta} \le C_3\rho \Bigl(\frac{\delta \log d}{N}\Bigr)^{\frac{1-q}{2}}.
		\]
		
		When the design only satisfies bounded moment conditions, again we first assume that $\ltwonorm{\widehat\bDelta}\ge c_5\sqrt{\rho}(\lambda_N/\kappa_{\cL})^{1-\frac{q}{2}}$ for some constant $c_5$. Analogous to the case of the sub-Gaussian design, it is easy to verify that for any $\delta>1$, as long as $\rho(\log d/N)^{\frac{1-q}{2}}\le C_4$ for some constant $C_4$, we will have by \eqref{eq:6.33} and \eqref{eq:3.3} in Lemma \ref{lem:2}
		\[
			P\Bigl({\widehat\bDelta}^T\widehat \bSigma_{\bx\bx}{\widehat\bDelta}\ge \frac{\kappa_{\cL}}{2}\ltwonorm{\widehat\bDelta}^2\Bigr)\ge 1-d^{1-\delta},
		\]
		An application of Lemma \ref{thm:1} delivers that for some constants $c_6, c_7>0$,
		\[
			\ltwonorm{\widehat\bDelta}\le c_7\sqrt{\rho}\Bigl(\frac{\lambda_N}{\kappa_{\cL}}\Bigr)^{1-\frac{q}{2}} \quad \text{and} \quad \lonenorm{\widehat\bDelta}\le c_8\sqrt{\rho}\Bigl(\frac{\lambda_N}{\kappa_{\cL}}\Bigr)^{\frac{1-q}{2}}.
		\]
		When $\ltwonorm{\widehat\bDelta}\le c_5\sqrt{\rho}(\lambda_N/\kappa_{\cL})^{1-\frac{q}{2}}$, we have exactly the same steps as those in the case of the sub-Gaussian design.
	\end{proof}

	\begin{proof}[\underline{Proof of Lemma \ref{lem:3}}]
	
	Recall that $\widetilde Y_i=\truncate{Y_i}$, then
	\begin{equation}
		\begin{aligned}
			\label{eq:6.10}
			& \opnorm{\widehat\bSigma_{Y\bX}(\tau) - \frac{1}{N}\sum\limits_{i=1}^N \inn{\bX_i, \bTheta^*}\bX_i} \\
			& \le \opnorm{\hat\bSigma_{Y\bX}(\tau)-\E(\widetilde Y\bX)}+\opnorm{\E((\widetilde Y-Y)\bX)}+\opnorm{\frac{1}{n}\sum\limits_{i=1}^n\inn{\bX_i, \bTheta^*}\bX_i- \E Y\bX}.
		\end{aligned}
	\end{equation}
	Next we use the covering argument to bound each term of the RHS. Let $\cS^{d-1}=\{\bu \in \Rd: \ltwonorm{u}=1\}$, $\cN^{d-1}$ be the $1/4-$net on $\cS^{d-1}$ and $\Phi(\bA)=\sup_{\bu \in \cN^{d_1-1}, \bv \in \cN^{d_2-1}} \bu^T\bA\bv$ for any matrix $\bA\in \Rdd$, then we claim
	\begin{equation}
		\label{eq:covering}
		\opnorm{\bA} \le \frac{16}{7}\Phi(\bA).
	\end{equation}
	 To establish the claim above, note that by the definition of the $1/4-$net, for any $\bu\in \cS^{d_1-1}$ and $\bv\in \cS^{d_2-1}$, there exist $\bu_1\in \cN^{d_1-1}$ and $\bv_1\in \cN^{d_2-1}$ such that $\ltwonorm{\bu-\bu_1}\le 1/4$ and $\ltwonorm{\bv-\bv_1}\le 1/4$. Then it follows that
	 \[
	 	\begin{aligned}
	 		\bu^T\bA\bv&=\bu_1^T\bA\bv_1+(\bu-\bu_1)^T\bA\bv_1+\bu_1^T\bA(\bv-\bv_1)+(\bu-\bu_1)^T\bA(\bv-\bv_1)\\
			&\le \Phi(\bA)+(\frac{1}{4}+\frac{1}{4}+\frac{1}{16})\sup_{\bu\in \cS^{d_1-1}, \bv \in \cS^{d_2-1}} \bu^T\bA\bv.
		\end{aligned}
	\]
	Taking the superlative over $\bu\in \cS^{d_1-1}$ and $\bv \in \cS^{d_2-1}$ on the LHS yields (\ref{eq:covering}).
	
	Now fix $\bu\in \cS^{d_1-1}$ and $\bv\in \cS^{d_2-1}$. Later on we always write $\bu^T\bX_i\bv$ as $Z_i$ and $\bu^T\bX\bv$ as $Z$ for convenience. Consider
	\[
		\bu^T(\hat\bSigma_{Y\bX}(\tau)-\E(\widetilde Y\bX))\bv=\frac{1}{n}\sum\limits_{i=1}^n\widetilde Y_iZ_i-\E(\widetilde YZ).
	\]
	Note that
	\beq
		\begin{aligned}
		\E (\widetilde Y_iZ_i- \E \widetilde Y_iZ_i)^2 & \le \E (\widetilde Y_iZ_i)^2 \le \E (Y_i Z_i)^2= \E (Z_i^2\E (Y_i^2|\bX_i)) = \E \Bigl (Z_i^2(\inn{\bX_i, \bTheta^*})^2 \Bigr) +\E\Bigl ( Z_i^2 \E(\epsilon_{i}^2|\bX_i) \Bigr )\\
		& \le 16R^2\kappa^4_0+ (2k\kappa_0^2/(k-1))\left(\E\bigl(\E(\epsilon_{i}^2|\bX_i)\bigr)^{k}\right)^{\frac{1}{k}} \le 16R^2\kappa^4_0+2k\kappa_0^2M^{\frac{1}{k}}/(k-1) <\infty,
		\end{aligned}
	\eeq
	which we denote by $v_1$ for convenience. Also we have
	\[
		\begin{aligned}
		 	\E |\widetilde Y_i Z_i|^p & \le \tau^{p-2}\E (Y_i^2|Z_i|^p)= \tau^{p-2}\E \left(\inn{\bX_i, \bTheta^*}^2|Z_i|^p + \epsilon_i^2 |Z_i|^p\right) \\
			& \le \tau^{p-2}\left (\sqrt{\E \inn{\bX_i, \bTheta^*}^4 \E |Z_i|^{2p}} + \Bigl (\E \bigl (\E (\epsilon_i^2 | \bX_i)^k \bigr) \Bigr)^{\frac{1}{k}}\Bigl( \E x_{ij}^{\frac{pk}{k-1}}\Bigr)^{1-\frac{1}{k}}\right) \\
			& \le \tau^{p-2} \Bigl(4R^2\kappa_0^2(\kappa_0\sqrt{2p})^p+M^{\frac{1}{k}}\bigl(\kappa_0 \sqrt{\frac{pk}{k-1}}\bigr)^p \Bigr)
		\end{aligned}
	\]
	and it holds for constants $c_1$ and $c_2$ that
	\begin{equation}
		\begin{aligned}
			\E |\widetilde Y_iZ_i- \E \widetilde Y_iZ_{i}|^p & \le 2^{p-1}(\E |\widetilde Y_i Z_{i}|^p + |\E \widetilde Y_iZ_{i}|^p) \le 2^{p-1}(\E |\widetilde Y_i Z_{i}|^p + (\E |Y_iZ_{i}|)^{p}) \\
			& \le c_1p!(c_2\tau)^{p-2}.
		\end{aligned}
	\end{equation}
	where the last inequality uses the Stirling approximation of $p!$. Define $v:= c_1 \vee v_1$. Then an application of Bernstein's Inequality (Theorem 2.10 in \cite{BLM13}) to $\Bigl\{\widetilde Y_i Z_i\Bigr\}_{i=1}^N$ delivers
	\[
		P\Big(|\frac{1}{N}\sum\limits_{i=1}^N \widetilde Y_i Z_i-\E \widetilde Y_iZ_i|>\sqrt{\frac{2vt}{N}}+\frac{c_2\tau t}{N}\Big)\le \exp(-t).
	\]
	By taking the union bound over all $(\bu, \bv)\in \cN^{d_1-1}\times \cN^{d_2-1}$ and (\ref{eq:covering}) it follows that
	\begin{equation}
		\label{eq:term1}
		P\Big(\opnorm{\hat\bSigma_{\widetilde Y\bX}(\tau)-\E(\widetilde Y\bX)}\ge \frac{16}{7}\Bigl(\sqrt{\frac{2vt}{N}}+\frac{c_2\tau t}{N}\Bigr)\Big)\le \exp\left((d_1+d_2)\log 8-t\right),
	\end{equation}
	where $c_1$ is a constant.
	
	Next we aim to bound $\opnorm{\E ((\widetilde Y-Y)\bX)}$. For any $\bu\in \cS^{d_1-1}$ and $\bv\in \cS^{d_2-1}$, by the Cauchy-Schwartz inequality and the Markov inequality,
	\[
		\begin{aligned}
			\E ((\widetilde Y-Y)Z) & \le \E (|YZ|\cdot \ind_{|Y|>\tau}) \le \sqrt{\E(Y^2 Z^2)P(|Y|>\tau)} \le \sqrt{\frac{v\E Y^2}{\tau^2}} \le \frac{\sqrt{v(2R^2\kappa_0^2+ M^{\frac{1}{k}})}}{\tau}.
		\end{aligned}
	\]
	Note that the inequality above holds for any $(\bu, \bv)$. Therefore
	\begin{equation}
		\label{eq:term2}
		\opnorm{\E ((\widetilde Y-Y)\bX)} \le  \frac{\sqrt{v(2R^2\kappa_0^2+ M^{\frac{1}{k}})}}{\tau}.
	\end{equation}
	
	Now we give an upper bound of the third term on the RHS of (\ref{eq:6.10}). For any $\bu\in \cS^{d_1-1}$ and $\bv\in \cS^{d_2-1}$, $\|\inn{\bX_i, \bTheta^*}Z_i\|_{\psi_1}\le R\kappa_0^2$, so by Proposition  5.16 (Bernstein-type inequality) in \cite{Ver10} it follows that for sufficiently small $t$,
	\[
		P\Big(\Big |\frac{1}{N}\sum\limits_{i=1}^N \inn{\bX_i, \bTheta^*}Z_i-\E YZ \Big|>t \Big)\le 2\exp\left(-\frac{c_3Nt^2}{R^2\kappa_0^4}\right)\,,
	\]
	where $c_3$ is a constant. Then an combination of the union bound over all points on $\cN^{d_1-1}\times \cN^{d_2-1}$ and (\ref{eq:covering}) delivers
	\begin{equation}
		\label{eq:term3}
		P\Big(\opnorm{\frac{1}{N}\sum\limits_{i=1}^N \inn{\bX_i, \bTheta^*}\bX_i- \E Y\bX}\ge \frac{16}{7}t\Big)\le 2\exp\Big((d_1+d_2)\log 8-\frac{c_3Nt^2}{R^2\kappa_0^4}\Big)\,,
	\end{equation}	
	
	Finally we choose $\tau\asymp \sqrt{N/(d_1+d_2)}$. Combining (\ref{eq:term1}), (\ref{eq:term2}) and (\ref{eq:term3}), we can find a constant $\gamma>0$ such that as long as $(d_1+d_2)/ N< \gamma$, it holds that
	\[
		P\Big(\opnorm{\widehat\bSigma_{Y\bX}(\tau) - \frac{1}{N}\sum\limits_{i=1}^N \inn{\bX_i, \bTheta^*}\bX_i}>\nu\sqrt{\frac{d_1+d_2}{N}}\Big)\le \eta\exp(-(d_1+d_2)),
	\]
	where $\nu$ and $\eta$ are constants.

	\end{proof}

	\begin{proof}[\underline{Proof of Theorem \ref{thm:cs}}]
		We first verify the RSC property. According to Proposition 1 in \cite{NWa11}, the following inequality holds for all $\bDelta\in \RR^{d_1\times d_2}$ with probability at least $1-2\exp(-N/32)$,
		\beq
			\label{eq:6.34}
			\sqrt{\vec(\bDelta)^T\overline\bSigma_{\bX\bX}\vec(\bDelta)} \ge \frac{1}{4}\ltwonorm{\sqrt{\bSigma_{\bX\bX}} \vec(\bDelta)}-c\Bigl(\sqrt{\frac{d_1}{N}}+ \sqrt{\frac{d_2}{N}} \Bigr)\nnorm{\bDelta}.
		\eeq
		Let $\kappa_{\cL}=(1/32)\lambda_{\min}(\bSigma_{\bX\bX})>0$. For ease of notations, write $\widehat\bTheta-\bTheta^*$ as $\widehat\bDelta$. Suppose $\fnorm{\widehat\bDelta} \ge c_1\sqrt{\rho}\Bigl(\frac{\lambda_N}{\kappa_{\cL}}\Bigr)^{1-\frac{q}{2}}$ for some $c_1>0$, then by (\ref{eq:6.5}) and (\ref{eq:6.6}),
		\beq			
			\label{eq:6.35}
			\begin{aligned}
				\nnorm{\widehat\bDelta} & \le  4\sqrt{2r}\fnorm{\widehat\bDelta}+4\tau^{1-q}\rho \le 4\sqrt{2\rho}\kappa_{\cL}^{\frac{q}{2}}\lambda_N^{-\frac{q}{2}}\fnorm{\widehat\bDelta}+ 4\kappa_{\cL}^{q-1}\rho\lambda_N^{1-q} \\
				& \le (4\sqrt{2}+4c_1^{-1})\sqrt{\rho}\kappa_{\cL}^{\frac{q}{2}}\lambda_N^{-\frac{q}{2}}\fnorm{\widehat\bDelta}	.		
			\end{aligned}
		\eeq
		Since we choose $\lambda_N= 2\nu \sqrt{(d_1+d_2)/N}$, there exists a constant $C_1$ such that as long as $\rho \bigl((d_1+d_2)/N\bigr)^{1-\frac{q}{2}} \le C_1$, we have by \eqref{eq:6.34} and \eqref{eq:6.35}
		\[
			\vec(\widehat\bDelta)^T\widehat\bSigma_{\bX\bX}\vec(\widehat\bDelta)\ge \kappa_{\cL}\fnorm{\widehat\bDelta}^2
		\]
		with probability at least $1-2\exp(-N/32)$. An application of Theorem \ref{thm:1} delivers that for some constants $c_2, c_3>0$, it holds with high probability that
		\[
			\fnorm{\widehat\bDelta}\le c_2\sqrt{\rho}\Bigl(\frac{\lambda_N}{\kappa_{\cL}}\Bigr)^{1-\frac{q}{2}} \quad \text{and} \quad  \nnorm{\widehat\bDelta} \le c_3\rho\Bigl(\frac{\lambda_N}{\kappa_{\cL}}\Bigr)^{\frac{1-q}{2}}.
		\]	
		When $\fnorm{\widehat\bDelta} \le c_1\sqrt{\rho}\bigl(\frac{\lambda_N}{\kappa_{\cL}}\bigr)^{1-\frac{q}{2}}$, we can still obtain the $\ell_1$ norm bound that $\lonenorm{\widehat\bDelta}\le c_4\rho\Bigl(\frac{\lambda_N}{\kappa_{\cL}}\Bigr)^{\frac{1-q}{2}}$ through \eqref{eq:6.5} and \eqref{eq:6.6}, where $c_4$ is some constant. Overall, we can achieve the conclusion that,
		\[
			\fnorm{\widehat\bDelta}^2 \le C_2\rho\Bigl(\frac{d_1+d_2}{N}\Bigr)^{1-\frac{q}{2}} \quad \text{and} \quad \nnorm{\widehat\bDelta} \le C_3\rho\Bigl(\frac{d_1+d_2}{N}\Bigr)^{\frac{1-q}{2}}
		\]
		 with probability at least $1-C_4\exp(-(d_1+d_2))$ for constants $C_2, C_3$ and $C_4$.
		
	\end{proof}

	\begin{proof}[\underline{Proof of Lemma \ref{lem:4}}]
	
		We follow essentially the same strategies as in proof of Lemma \ref{lem:3}. The only difference is that we do not use the covering argument to bound the first term in (\ref{eq:6.10}). Instead we apply the Matrix Bernstein inequality (Theorem 6.1.1 in \cite{Tro15}) to take advantage of the singleton design under the matrix completion setting.
		
		 For any fixed $\bu\in \cS^{d_1-1}$ and $\bv\in \cS^{d_2-1}$, write $\bu^T\bX_i\bv$ as $Z_i$. Then we have
		\[
			\begin{aligned}
			\E (\widetilde Y_iZ_i) & = \sqrt{d_1d_2}\E ( \widetilde Y_i u_{j(i)}v_{k(i)})=\frac{1}{\sqrt{d_1d_2}}\sum\limits_{j_0=1}^{d_1}\sum\limits_{k_0=1}^{d_2}\E \bigl (|\widetilde Y_i| \big | j(i)=j_0, k(i)=k_0 \bigr )\cdot |u_{j_0}v_{k_0}| \\
			& \le \frac{1}{\sqrt{d_1d_2}}\sum\limits_{j_0=1}^{d_1}\sum\limits_{k_0=1}^{d_2}\E \bigl (|Y_i|  \big | j(i)=j_0, k(i)=k_0 \bigr )\cdot |u_{j_0}v_{k_0}| \\
			& \le \sqrt{R^2+M^{\frac{1}{k}}}\cdot \frac{1}{\sqrt{d_1d_2}}\sum\limits_{j_0=1}^{d_1}\sum\limits_{k_0=1}^{d_2}|u_{j_0}v_{k_0}|\le \sqrt{R^2+M^{\frac{1}{k}}}.
			\end{aligned}
		\]
		Since the above argument holds for all $\bu\in \cS^{d_1-1}$ and $\bv \in \cS^{d_2-1}$, we have $\opnorm{\E (\widetilde Y_i \bX_i)} \le \sqrt{R^2+M^{\frac{1}{k}}}$. In addition,
		\[
			\begin{aligned}
				\opnorm{\E \widetilde Y_i^2 \bX_i^T\bX_i}&= d_1d_2\opnorm{\E \widetilde Y_i^2 \be_{k(i)}\be_{j(i)}^T\be_{j(i)}\be_{k(i)}^T}=d_1d_2\opnorm{\E \widetilde Y_i^2\be_{k(i)} \be_{k(i)}^T}\\
				&=d_1d_2\Bigl \|\E\Bigl( \E (\widetilde Y_i^2 | \bX_i) \be_{k(i)}\be_{k(i)}^T\Bigr)\Bigr \|_{op}=\Bigl \|\sum\limits_{k_0=1}^{d_2}\sum\limits_{j_0=1}^{d_1}\E\Bigl (\widetilde Y_i^2| k(i)=k_0, j(i)=j_0 \Bigr )\be_{k_0}\be_{k_0}^T \Bigr\|_{op} \\
				&=\max_{k_0=1,...,d_2} \sum\limits_{j_0=1}^{d_1}\E\Bigl (\widetilde Y_i^2| k(i)=k_0, j(i)=j_0 \Bigr ) \le d_1\E Y_i^2 \le d_1(R^2+M^{\frac{1}{k}})
			\end{aligned}
		\]
		Similarly we can get $\opnorm{\E \widetilde Y_i^2 \bX_i\bX_i^T} \le d_2(R^2+M^{\frac{1}{k}})$. Write $\widetilde Y_i \bX_i- \E \widetilde Y_i \bX_i$ as $\bA_i$. Therefore by the triangle inequality, $\max(\opnorm{\E \bA_i^T\bA_i}, \opnorm{\E \bA_i\bA_i^T}) \le  (d_1\vee d_2)(R^2+M^{\frac{1}{k}})$, which we denote by $v$ for convenience. Since $\opnorm{\bA_i} \le 2\sqrt{d_1d_2}\tau$, an application of the Matrix Bernstein inequality delivers,
		\beq
			\label{eq:mc-term1}
			P\left(\opnorm{\hat\bSigma_{\widetilde Y\bX}(\tau)-\E(\widetilde Y\bX)}\ge t \right)\le (d_1+d_2)\exp\Bigl(\frac{-Nt^2/2}{v+\sqrt{d_1d_2}\tau t/3}\Bigr),
		\eeq

		Next we aim to bound $\opnorm{\E ((\widetilde Y_i-Y_i)\bX_i)}$. Fix $\bu\in \cS^{d_1-1}$ and $\bv\in \cS^{d_2-1}$. By the Cauchy-Schwartz inequality and the Markov inequality,
	\[
		\begin{aligned}
			\E ((\widetilde Y_i-Y_i)Z_i) & = \sqrt{d_1d_2}\E ((\widetilde Y_i-Y_i)u_{j(i)}v_{k(i)}) \\
			& =\frac{1}{\sqrt{d_1d_2}}\sum\limits_{j_0=1}^{d_1}\sum\limits_{k_0=1}^{d_2}\E \bigl (|\widetilde Y_i-Y_i| \big | j(i)=j_0, k(i)=k_0 \bigr )\cdot |u_{j_0}v_{k_0}| \\
			& \le \frac{1}{\sqrt{d_1d_2}}\sum\limits_{j_0=1}^{d_1}\sum\limits_{k_0=1}^{d_2}\E \bigl (|Y_i|\cdot \ind_{|Y_i|>\tau} \big | j(i)=j_0, k(i)=k_0 \bigr )\cdot |u_{j_0}v_{k_0}| \\
			& \le \frac{R^2+M^{\frac{1}{k}}}{\tau}\cdot \frac{1}{\sqrt{d_1d_2}}\sum\limits_{j_0=1}^{d_1}\sum\limits_{k_0=1}^{d_2}|u_{j_0}v_{k_0}|\le \frac{R^2+M^{\frac{1}{k}}}{\tau}.
		\end{aligned}
	\]
	Note that the inequality above holds for any $(\bu, \bv)$. Therefore
	\begin{equation}
		\label{eq:mc-term2}
		\opnorm{\E ((\widetilde Y_i-Y_i)\bX_i)} \le \frac{R^2+M^{\frac{1}{k}}}{\tau} .
	\end{equation}
	
	Now we give an upper bound of the third term on the RHS of (\ref{eq:6.10}). Denote $\inn{\bX_i, \bTheta^*}\bX_i - \E\inn{\bX_i, \bTheta^*}\bX_i$ by $\bB_i$. It is not hard to verify that $\|\bB_i\|_{op} \le 2R\sqrt{d_1d_2}$ and $\max(\opnorm{\E \bB_i^T\bB_i}, \allowbreak \opnorm{\E \bB_i\bB_i^T}) \le  (d_1\vee d_2)R^2$, it follows that for any $t \in \RR$,
	\beq
		\label{eq:mc-term3}
		P\Big(\Bigl \|\frac{1}{N}\sum\limits_{i=1}^N \inn{\bX_i, \bTheta^*}\bX_i-\E Y_i\bX_i \Bigr \|_{op} \ge t\Big)\le  (d_1+d_2)\exp\Bigl(\frac{-Nt^2/2}{(d_1\vee d_2)R^2+2R\sqrt{d_1d_2}t/3}\Bigr).
	\eeq
	Finally, choose $\tau=\sqrt{N/((d_1 \vee d_2)\log (d_1+d_2))}$. Combining (\ref{eq:mc-term1}), (\ref{eq:mc-term2}) and (\ref{eq:mc-term3}), it is easy to verify that for any $\delta>0$, there exist constants $\nu$ and $\gamma$ such that the conclusion holds as long as $(d_1 \vee d_2)\log (d_1+d_2)/N< \gamma$.
	
	\end{proof}

	\begin{proof}[\underline{Proof of Theorem \ref{thm:mc}}]
		This proof essentially follows the proof of Lemma 1 in \cite{NWa12}. Write $(d_1+d_2)/2$ as $d$. Define a constraint set
		\[
		 	\cC(N;c_0)=\Bigl\{ \bDelta\in \RR^{d_1\times d_2}, \bDelta\neq \bzero | \sqrt{d_1d_2}\frac{\|\bDelta\|_{\max}\cdot \nnorm{\bDelta}}{\fnorm{\bDelta}^2} \le \frac{1}{c_0L}\sqrt{\frac{N}{(d_1+d_2)\log (d_1+d_2)}}\Bigr\}.
		\]
		According to Case 1 in the proof of Lemma 1 in \cite{NWa12}, if $\widehat\bDelta\notin\cC(N;c_0)$, we have
		\[
			\fnorm{\widehat\bDelta}^2 \le 2c_0R\sqrt{\frac{d\log d}{N}} \{ 8\sqrt{r}\fnorm{\widehat\bDelta}+4\sum\limits_{j=r+1}^{d_1\wedge d_2} \sigma_j(\bTheta^*) \},
		\]
		Following the same strategies in the proof of Theorem \ref{thm:1} in our work, we have
		\[
			\fnorm{\widehat\bDelta} \le c_1\sqrt{\rho}\Bigl(2c_0R\sqrt{\frac{d\log d}{N}}\Bigr)^{1-\frac{q}{2}}
		\]
		for some constant $c_1$. If $\widehat\bDelta\in \cC(N;c_0)$, according to Case 2 in proof of Lemma 1 in \cite{NWa12}, with probability at least $1-C_1\exp(-C_2d\log d)$, either $\fnorm{\widehat\bDelta} \le 512R/\sqrt{N}$, or $\vec(\widehat\bDelta)^T\widehat\bSigma_{\bX\bX}\vec(\widehat\bDelta) \ge (1/256)\fnorm{\widehat\bDelta}^2$, where $C_1$ and $C_2$ are certain constants. For the case where $\fnorm{\widehat\bDelta} \le 512R/\sqrt{N}$, combining this fact with \eqref{eq:6.5} and \eqref{eq:6.6} delivers that
		\[
			\nnorm{\widehat\bDelta}\le 4\sqrt{2\rho}\tau^{-\frac{q}{2}}\frac{512R}{\sqrt{N}}+4\tau^{1-q}\rho.
		\]
		We minimize the RHS of the inequality above by plugging in $\tau=\bigl(\frac{R^2}{\rho N}\bigr)^{\frac{1}{2-q}}$. Then we have for some constant $c_3>0$,
		\[
			\nnorm{\widehat\bDelta} \le c_3\Bigl(\rho\bigl(\frac{R^2}{N}\bigr)^{1-q}\Bigr)^{\frac{1}{2-q}}.
		\]
		For the case where $\vec(\widehat\bDelta)^T\widehat\bSigma_{\bX\bX}\vec(\widehat\bDelta) \ge (1/256)\fnorm{\widehat\bDelta}^2$, it is implied by Theorem \ref{thm:1} in our work that
		\[
			\fnorm{\widehat\bDelta} \le c_4\sqrt{\rho}\lambda_N^{1-\frac{q}{2}}\quad \text{and} \quad \nnorm{\widehat\bDelta} \le c_5\rho\lambda_N^{\frac{1-q}{2}}
		\]
		for some constants $c_4$ and $c_5$. Since $\lambda_N = \nu\sqrt{\delta(d_1 \vee d_2)\log(d_1+d_2)/N}$ and $R\ge 1$, by Lemma \ref{lem:4}, it holds with probability at least $1-C_1\exp(-C_2(d_1+d_2))-2(d_1+d_2)^{1-\delta}$ that
		\[
			\fnorm{\widehat\bDelta}^2 \le C_3\max\Bigl\{\rho\Bigl( \frac{\delta R^2(d_1+d_2)\log(d_1+d_2)}{N}\Bigr)^{1-\frac{q}{2}}, \frac{R^2}{N}\Bigr\}
		\]
		and
		\[
			\nnorm{\widehat\bDelta} \le C_4\max\Bigl\{\rho\Bigl(\frac{\delta R^2(d_1+d_2)\log(d_1+d_2)}{N}\Bigr)^{\frac{1-q}{2}}, \bigl(\frac{\rho R^{2-2q}}{N^{1-q}}\bigr)^{\frac{1}{2-q}}\Bigr\}
		\]
		for constants $C_3$ and $C_4$.
		
	\end{proof}

	\begin{proof}[\underline{Proof of Lemma \ref{lem:5}}]
	
		(a) First we study the sub-Gaussian design. Since $\widetilde\bx_i=\bx_i$, we do not add tildes above $\bx, \bx_i$ or $x_{ij}$ in this proof. Denote $\shrunk{\by_i}$ by $\widetilde\by_i$, then we have
		\beq
			\label{eq:2.2-3term}
			\begin{aligned}
			\opnorm{\widehat\bSigma_{\bx\widetilde \by}(\tau)&  - \frac{1}{n}\sum\limits_{j=1}^n \bx_j\bx_j^T\bTheta^*}= \opnorm{\widehat\bSigma_{\widetilde\by \bx}(\tau)  - \frac{1}{n}\sum\limits_{j=1}^n \bTheta^{*T}\bx_j\bx_j^T} \\
			& \le \opnorm{\hat\bSigma_{\widetilde\by \bx}(\tau)-\E(\widetilde\by \bx^T)}+\opnorm{\E(\widetilde\by \bx^T-\by\bx^T)}+\opnorm{\widehat{\bSigma}_{\bx\bx}\bTheta^* - \E (\by\bx^T)}
			\end{aligned}
		\eeq
		
		Let
		\[
			\bS_i=\left[
			\begin{array}{cc}
				\bzero & \widetilde\by_i\bx_i^T-\E \widetilde\by_i\bx_i^T  \\
				\bx_i\widetilde\by_i^T- \E \bx_i\widetilde\by_i^T & \bzero
			\end{array}
			\right].
		\]
		Now we bound $\opnorm{\E \bS_i^p}$ for $p>2$. When $p$ is even, i.e., $p=2l$, we have
		\[
			\bS_i^{2l}=\left[
				\begin{array}{cc}
					((\widetilde\by_i\bx_i^T- \E \widetilde\by_i\bx_i^T)(\bx_i\widetilde\by_i-\E \bx_i\widetilde\by_i))^l & \bzero \\
					\bzero & ((\bx_i\widetilde\by_i-\E \bx_i\widetilde\by_i)(\widetilde\by_i\bx_i^T- \E \widetilde\by_i\bx_i^T))^l
				\end{array}
			\right].
		\]
		For any $\bv\in \cS^{d_2-1}$,
		\beq
			\label{eq:evenmoment}
			\begin{aligned}
				\bv^T(\E (\widetilde\by_i\bx_i^T\bx_i \widetilde\by_i^T)^l )\bv & = \E ((\bx_i^T\bx_i)^l(\widetilde\by_i^T\widetilde\by_i)^{l-1}(\bv^T\widetilde\by_i)^2) \le \tau^{2l-2}\E ((\bx_i^T\bx_i)^l(\bv^T\widetilde\by_i)^2) \\
				& \le \tau^{2l-2}\E ((\bx_i^T\bx_i)^l(\bv^T\by_i)^2)= \tau^{2l-2} \E\left(\E [(\bv^T\by_i)^2|\bx_i](\bx_i^T\bx_i)^l\right) \\
				& = \tau^{2l-2}\E \left((\bv^T\bTheta^{*^T}\bx_i)^2(\bx_i^T\bx_i)^l+\E[(\bv^T\bepsilon_i)^2|\bx_i](\bx_i^T\bx_i)^l\right).
			\end{aligned}
		\eeq
		Also note that for any $\bv\in \cS^{d_2-1}$, we have
		\beq
			R\ltwonorm{\bv}^2 \ge \bv^T\E \by_i\by_i^T \bv \ge  (\bTheta^{*}\bv)^T\bSigma_{\bx\bx}(\bTheta^*\bv)\ge \kappa_{\cL}\ltwonorm{\bTheta^{*T}\bv}^2.
		\eeq
		Therefore it follows that $\opnorm{\bTheta^*} \le \sqrt{R/\kappa_{\cL}}$. Combining this with the fact that $\|\bx_i^T\bx_i\|_{\psi_1} \le 2d_1\kappa_0^2$, it follows that
		\[
			\E \left ((\bv^T\bTheta^{*^T}\bx_i)^2(\bx_i^T\bx_i)^l \right) \le \sqrt{\E (\bv^T\bTheta^{*^T}\bx_i)^4} \sqrt{\E (\bx_i^T\bx_i)^{2l}} \le (2\sqrt{R/\kappa_{\cL}}\kappa_0)^2(4d_1\kappa_0^2l)^l.
		\]
		Also by H\"{o}lder Inequality, we have
		\[
			\begin{aligned}
			 \E \left( \E[(\bv^T\bepsilon_i)^2|\bx_i](\bx_i^T\bx_i)^l\right) & \le \E \left(\E [(\bv^T\bx_i)^2 | \bx_i]^k\right)^{\frac{1}{k}}\left(\E [(\bx_i^T\bx_i)^{\frac{lk}{k-1}}]\right)^{1-\frac{1}{k}} \\
			 & \le  M^{\frac{1}{k}}(2d_1\kappa_0^2\frac{lk}{k-1})^l.
			 \end{aligned}
		\]
		Therefore we have
		\beq
			\label{eq:5.19}
			\opnorm{\E (\widetilde\by_i\bx_i^T\bx_i \widetilde\by_i^T)^l )} \le c'_1(c_1\tau)^{2l-2}(d_1l)^l
		\eeq
		for $l\ge 1$, where $c_1$ and $c'_1$ are some constants. Letting $l=1$ in the equation above implies that $\opnorm{\E \widetilde\by_i\bx_i^T \E \bx_i\widetilde\by_i^T} \le \opnorm{\E \widetilde\by_i\bx_i^T\bx_i\widetilde\by_i^T} \le c'_1d_1$. Therefore it holds that $(\widetilde\by_i\bx_i^T- \E \widetilde\by_i\bx_i^T)(\bx_i\widetilde\by_i-\E \bx_i\widetilde\by_i^T) \preceq 2 \widetilde\by_i\bx_i^T\bx_i\widetilde\by_i^T + 2\E \widetilde\by_i\bx_i^T \E \bx_i\widetilde\by_i^T \preceq 2\tilde \by_i\bx_i^T\bx_i\tilde \by_i^T +2c_1'd_1\bI$. In addition, for any commutable positive semi-definite (PSD) matrices $\bA$ and $\bB$, it is true that $(\bA+\bB)^l \preceq 2^{l-1} (\bA^l +\bB^l)$ for $l>1$. Then it follows that
		\beq
			\label{eq:matrixmoment}
			\begin{aligned}
				 ((\widetilde\by_i\bx_i^T- \E \widetilde\by_i\bx_i^T)(\bx_i\widetilde\by_i^T-\E \bx_i\widetilde\by_i^T))^l & \preceq (2 \widetilde\by_i\bx_i^T\bx_i\widetilde\by_i^T + 2c_1'd_1\bI)^l \\
				& \preceq 2^{2l-1}((\widetilde\by_i\bx_i^T\bx_i\widetilde\by_i^T)^l+(c_1'd_1)^l\bI).
			\end{aligned}
		\eeq
		Therefore we have $\opnorm{\E ((\widetilde\by_i\bx_i^T- \E \widetilde\by_i\bx_i^T)(\bx_i\widetilde\by_i^T-\E \bx_i\widetilde\by_i^T))^l} \le c'_2(c_2\tau)^{2l-2}(d_1l)^l$ for some constants $c_2$ and $c'_2$. Using similar methods, we can derive $\opnorm{\E ((\bx_i\widetilde\by_i^T-\E \bx_i\widetilde\by_i^T)(\widetilde\by_i\bx_i^T- \E \widetilde\by_i\bx_i^T))^l} \le c'_3(c_3\tau)^{2l-2}(d_2l)^l$ for some constants $c_3$ and $c'_3$. So we can achieve for some constants $c_4$ and $c'_4$,
		\beq
			\label{eq:moment1}	
			\opnorm{\E \bS_i^{2l}}\le c'_4(c_4\tau\sqrt{d_1 \vee d_2})^{2l-2}l^l(d_1 \vee d_2)
		\eeq
		
		When $p$ is an odd number, i.e., $p=2l+1$, by (\ref{eq:matrixmoment}) we have
		\[
			\begin{aligned}
				&\bS_i^{2l+1} =\sqrt{\bS_i^{4l+2}} \\
				& \preceq 2^{2l+1/2} \diag(\sqrt{(\widetilde\by_i\bx_i^T\bx_i\widetilde\by_i^T)^{2l+1}+(\E \widetilde\by_i\bx_i^T\E \bx_i\widetilde\by_i^T)^{2l+1}}, \sqrt{(\bx_i\widetilde\by_i^T\widetilde\by_i\bx_i^T)^{2l+1}+(\E \bx_i\widetilde\by_i^T\E \widetilde\by_i\bx_i\widetilde\by_i^T)^{2l+1}}) \\
				& \preceq 2^{2l+1/2} \diag \left((\widetilde\by_i\bx_i^T\bx_i\widetilde\by_i^T)^{l+1/2}+(\E \widetilde\by_i\bx_i^T\E \bx_i\widetilde\by_i^T)^{l+1/2}, (\bx_i\widetilde\by_i^T\widetilde\by_i\bx_i^T)^{l+1/2}+(\E \widetilde\by_i\bx_i^T\E \bx_i\widetilde\by_i^T)^{l+1/2} \right).
			\end{aligned}
		\]
		Note that $(\bx_i\widetilde\by_i^T\widetilde\by_i\bx_i^T)^{\frac{1}{2}} = \bx_i\widetilde\by_i^T\widetilde\by_i\bx_i^T/(\ltwonorm{\bx_i}\ltwonorm{\widetilde\by_i}) $ and $(\widetilde\by_i\bx_i^T\bx_i\widetilde\by_i^T)^{\frac{1}{2}} = \widetilde\by_i\bx_i^T\bx_i\widetilde\by_i^T/ (\ltwonorm{\bx_i}\ltwonorm{\widetilde\by_i})$, so for any $\bv\in \cS^{d_2-1}$, we have
		\[
			\bv^T\E (\widetilde\by_i\bx_i^T\bx_i\widetilde\by_i^T)^{l+1/2} \bv = \E ((\bx_i^T\bx_i)^{l+\frac{1}{2}}(\widetilde\by_i^T\widetilde\by_i)^{l-\frac{1}{2}}(\bv^T\widetilde\by_i)^2) \le \tau^{2l-1}\E ((\bx_i^T\bx_i)^{l+\frac{1}{2}}(\bv^T\widetilde\by_i)^2).
		\]
		Following the same steps as in the case of $l$ being even, we can derive $\opnorm{\E (\widetilde\by_i\bx_i^T\bx_i \widetilde\by_i^T)^{l+1/2} )} \le c_5'(c_5\tau)^{2l-1}(d_1(l+1/2))^{l+1/2}$, $\opnorm{\E (\bx_i\tilde \by_i^T\tilde \by_i \bx_i^T)^{l+1/2} )} \le c_6'(c_6\tau)^{2l-1}(d_2(l+1/2))^{l+1/2}$ and finally
		\beq
			\label{eq:moment2}
			\opnorm{\E \bS_i^{2l+1}} \le c'_7(c_7\tau \sqrt{d_1 \vee d_2})^{2l-1}(l+1/2)^{l+1/2}(d_1\vee d_2).
		\eeq
		Define $\sigma^2 = (c_7 \vee c_7')(d_1 \vee d_2)$. Then by (\ref{eq:moment1}), we have $\opnorm{\E \bS_i^2} \le \sigma^2$. Also by combining (\ref{eq:moment1}) and (\ref{eq:moment2}), we can verify the moment constraints of Lemma \ref{lem:bernsteinm}; for $p> 2$, we have
		\beq
			\label{eq: bmoment}
			\opnorm{\E \bS_i^p}\le p!((c_4 \vee c_7)\tau\sqrt{d_1 \vee d_2})^{p-2}\sigma^2.
		\eeq
		Therefore by 	Lemma \ref{lem:bernsteinm}, we have
		\beq
			\label{eq:mt-term1}
			\begin{aligned}
				P \Big( \opnorm{\hat\bSigma_{\widetilde\by \bx}(\tau)-\E(\widetilde\by \bx^T)} & \ge t \Big)  \le P \Big(\opnorm{\frac{1}{n}\sum\limits_{i=1}^n \bS_i} \ge t\Big) \\
				& \le (d_1+d_2)\cdot \exp\Big(-c \min\bigl(\frac{nt^2}{\sigma^2}, \frac{nt}{(c_4 \vee c_7)\tau \sqrt{d_1 \vee d_2}}\bigr) \Big).
			\end{aligned}
		\eeq
			
	Next we aim to bound $\opnorm{\E (\widetilde\by_i\bx_i^T- \by_i\bx_i^T)}$. Note that for any $\bu \in \cS^{d_1-1}$ and $\bv \in \cS^{d_2-1}$,
	\[
		\begin{aligned}
		\E ((\bv^T\by_i)^2(\bu^T \bx_i)^2) & = \E (\E((\bv^T\by_i)^2|\bx_i)(\bu^T\bx_i)^2)= \E \Bigl(\E \bigl((\bv^T\bTheta^* \bx_i)^2(\bu^T\bx_i)^2\bigr) + \E ((\bv^T\bepsilon_i)^2|\bx_i)(\bu^T\bx_i)^2 \Bigr) \\
		& := v^2< \infty.
		\end{aligned}
	\]
	For any $\bu\in \cS^{d_1-1}$ and $\bv\in \cS^{d_2-1}$, by the Cauchy-Schwartz inequality and the Markov inequality,
	\[
		\begin{aligned}
			\E ((\bv^T\widetilde\by_i)(\bu^T\bx_i)- (\bv^T\by_i)(\bu^T\bx_i)) & \le \E (|(\bv^T\by_i)(\bu^T\bx_i)|\cdot \ind_{\ltwonorm{\by}>\tau}) \le \sqrt{\E(|(\bv^T\by_i)(\bu^T\bx_i)|^2)P(\|\by\|_2>\tau)} \\
			& \le \sqrt{\frac{v^2 \E \ltwonorm{\by}^2}{\tau^2}}=\frac{v}{\tau}\sqrt{R^2\E \ltwonorm{\bx}^2+\E \ltwonorm{\bepsilon}^2}\le \frac{v \sqrt{ d_2}}{\tau}\sqrt{R^2\kappa_0^2+M^{\frac{1}{k}}}.
		\end{aligned}
	\]
	Since $\bu$ and $\bv$ are arbitrary, we have
	\begin{equation}
		\label{eq:mt-term2}
	       \opnorm{\E (\widetilde\by_i\bx_i^T- \by_i\bx_i^T)} \le \frac{v \sqrt{d_2}}{\tau}\sqrt{R^2\kappa_0^2+M^{\frac{1}{k}}}.
	\end{equation}
	
	Now we give an upper bound of the third term on the RHS of (\ref{eq:2.2-3term}). For this term, we still follow the covering argument. Let $\cN^{d-1}$ be the $1/4-$net on $\cS^{d-1}$. For any $\bu\in \cS^{d_1-1}$ and $\bv\in \cS^{d_2-1}$, $\|\bu^T\bx_i\bx_i^T\bTheta^*\bv\|_{\psi_1}\le \sqrt{R/\kappa_{\cL}}\kappa_0^2$, so by Proposition  5.16 (Bernstein-type inequality) in \cite{Ver10} it follows that for sufficiently small $t$,
	\[
		P\Big(\left |\bu^T\widehat{\bSigma}_{\bx\bx}\bTheta^*\bv-\E (\bu^T\bx_i)(\bv^T\by_i) \right|>t \Big)\le 2\exp\left(-c_8nt^2\right),
	\]
	where $c_8$ is some positive constant. Then an combination of the union bound over all points on $\cN^{d_1-1}\times \cN^{d_2-1}$ and (\ref{eq:covering}) delivers
	\begin{equation}
		\label{eq:mt-term3}
		P\Big(\ltwonorm{\widehat{\bSigma}_{\bx\bx}\bTheta^* - \E \by\bx^T}\ge \frac{16}{7}t\Big)\le 2\exp\left((d_1+d_2)\log 8-c_8nt^2\right),
	\end{equation}
	where $c_8$ is a constant.
	
	Finally we choose $\tau=O(\sqrt{n/\log (d_1+d_2)})$, and combining (\ref{eq:mt-term1}), (\ref{eq:mt-term2}) and (\ref{eq:mt-term3}) delivers that for any $\delta>0$, as long as $(d_1+d_2)\log(d_1+d_2)/n<\gamma$ for some constant $\gamma>0$,
	\[
		P\Big(\opnorm{\widehat\bSigma_{\bx\widetilde\by}(\tau_1) - \frac{1}{n}\sum\limits_{j=1}^n \bTheta^{*T}\bx_j\bx_j^T}\ge \sqrt{\frac{(\nu+\delta)(d_1+d_2)\log (d_1+d_2)}{n}}\Big)\le 2(d_1+d_2)^{1-\eta\delta},
	\]
	where $\nu$ and $\eta$ are universal constants.
	
	(b) Now we switch to the case of designs with only bounded moments. We first show that for two constants $c_3$ and $c_4$,
	\beq
		\label{eq:6.31}
		P\Bigl(\opnorm{\widehat\bSigma_{\widetilde\bx\widetilde\by} - \bSigma_{\bx\by}} \ge \sqrt{\frac{(c_3+\delta)(d_1+d_2)\log(d_1+d_2)}{n}}\Bigr)\le (d_1+d_2)^{1-c_4\delta}.
	\eeq
	Note that
	\[
		\E (\bv^T\by_i)^4 = \E (\bv^T\bTheta^{*T}\bx_i+ \bv^T\bepsilon_i)^4\le 16 \bigl(\E (\bv^T\bTheta^{*T}\bx_i)^4 + \E (\bv^T\bepsilon_i)^4 \bigr).
	\]
	By Rosenthal-type inequality in \cite{ISh98}, we have
	\[
		\E (\bv^T\by_i)^4 \le 16(16R^4\kappa_0^4 + M) =: V_1.
	\]
	In addition, $\E (\bv^T\bx_i)^4 \le 16\kappa_0^4=:V_2$. Let $V:=\max(V_1, V_2)$, $\bZ_i := \bx_i\by_i^T$and $\widetilde \bX_i:= \widetilde \bx_i\widetilde \by_i^T$. Following the proof strategy for Theorem \ref{thm:6}, we have
		\[
			\opnorm{\widetilde\bZ_i - \E \widetilde \bZ_i}\le \opnorm{\widetilde \bZ_i}+\opnorm{\E \bZ_i}=\ltwonorm{\widetilde\bx_i}\ltwonorm{\widetilde \by_i}+ \sqrt{R} \le (d_1d_2)^{\frac{1}{4}}\tau^2+\sqrt{V}.
		\]
		Also for any $\bv\in \cS^{d-1}$, we have
		\[	
			\begin{aligned}
				\E(\bv^T \widetilde \bZ_i^T\widetilde\bZ_i \bv) & =\E (\ltwonorm{\widetilde\bx_i}^2 (\bv^T\widetilde\by_i)^2)\le \E  (\ltwonorm{\bx_i}^2 (\bv^T\by_i)^2) \\
			 	& = \sum\limits_{j=1}^d \E (x_{ij}^2(\bv^T \by_i)^2) \le \sum\limits_{j=1}^d \sqrt{\E (x_{ij}^4) \E(\bv^T \by_i)^4 } \le Vd_1.
			\end{aligned}
		\]
		Then it follows that $\opnorm{\E \widetilde\bZ_i^T\widetilde\bZ_i}\le Vd_1$. Similarly we can obtain $\opnorm{\E \widetilde \bZ_i\widetilde \bZ_i^T} \le Vd_2$. Denote $d_1+d_2$ by $d$. Since $\max(\opnorm{(\E \widetilde \bX_i)^T\E \widetilde\bX_i}, \opnorm{(\E \widetilde \bZ_i)\E \widetilde\bZ_i^T}) \le \opnorm{\E \bZ_i}^2 \le V$, $$\max(\opnorm{\E((\widetilde\bZ_i- \E\widetilde\bZ_i)^T(\widetilde\bZ_i-\E \widetilde\bZ_i))}, \opnorm{\E((\widetilde\bZ_i- \E\widetilde\bZ_i)(\widetilde\bZ_i-\E \widetilde\bZ_i)^T)}) \le V(d+1).$$ By Corollary 6.2.1 in \cite{Tro12}, we have for some constant $c_1$,
		\beq
			\label{eq:6.32}
			P \left( \opnorm{\frac{1}{n}\sum\limits_{i=1}^n \widetilde\bZ_i-\E \widetilde\bZ_i}>t \right)\le d\exp\Bigl(-c_1\bigl(\frac{nt^2}{V(d+1)} \wedge \frac{nt}{\sqrt{d}\tau_1\tau_2+\sqrt{V}}\bigr)\Bigr).	
		\eeq
		Now we bound the bias $\E \opnorm{\bZ_i-\widetilde\bZ_i}$. For any $\bv\in \cS^{d-1}$, it holds that
		\[
			\begin{aligned}
				\E & (\bv^T(\bZ_i-\widetilde\bZ_i)\bv) =\E(|(\bv^T\bx_i)(\bv^T\by_i)-(\bv^T\widetilde\bx_i)(\bv^T\widetilde \by_i)|\ind_{\{\|\bx_i\|_4\ge \tau_1\ \text{or}\ \|\by_i\|_4 \ge \tau_2\}}) \\
				& \le \E (|(\bv^T\bx_i)(\bv^T\by_i)| (\ind_{\{\|\bx_i\|_4>\tau_1\}}+ \ind_{\|\by_i\|_4>\tau_2})) \\
				& \le \sqrt{\E ((\bv^T\bx_i)^2(\bv^T\by_i)^2) P(\|\bx_i\|_4>\tau_1)} + \sqrt{\E ((\bv^T\bx_i)^2(\bv^T\by_i)^2) P(\|\by_i\|_4>\tau_2)} \\
				& \le \frac{V\sqrt{d_1}}{\tau_1^2}+ \frac{V\sqrt{d_2}}{\tau_2^2}.
			\end{aligned}
		\]
		Therefore we have $\opnorm{\E(\bZ_i-\widetilde\bZ_i)}\le V\sqrt{d}(1/\tau_1^2+1/\tau_2^2)$. Choose $\tau_1\asymp (n/\log d_1)^{\frac{1}{4}}$, $\tau_2 \asymp (n/\log d_2)^{\frac{1}{4}}$ and substitute $t$ with $\sqrt{\delta d\log d/n}$. Then we obtain \eqref{eq:6.31} by combining the bound of bias and (\ref{eq:6.32}).
		
		Finally, note that
		\[
			\opnorm{\widehat\bSigma_{\widetilde\bx\widetilde\by} - \widehat\bSigma_{\widetilde\bx\widetilde\bx}\bTheta^*} \le \opnorm{\widehat\bSigma_{\widetilde\bx\widetilde \by} - \bSigma_{\bx\by}}+ \opnorm{(\widehat\bSigma_{\widetilde\bx\widetilde\bx}- \bSigma_{\bx\bx})\bTheta^*}+ \opnorm{\bSigma_{\bx\by}- \bSigma_{\bx\bx}\bTheta^*}.
		\]
		Combining consistency of $\widehat\bSigma_{\widetilde \bx\widetilde\bx}$ established in Theorem \ref{thm:6}, Condition (C1) and \eqref{eq:6.31}, we can reach the conclusion of the lemma.
	\end{proof}
		
	\begin{lem}[Matrix Bernstein Inequality with Moment Constraint]
		\label{lem:bernsteinm}
		Consider a finite sequence $\{\bS_i\}_{i=1}^n$ of independent, random, Hermitian matrices with dimensions $d\times d$. Assume that $\E \bS_i=0$ and $\opnorm{\E \bS_i^2} \le \sigma^2$. Also the following moment conditions hold for all $1\le i \le n$ and $p\ge 2$:
		\[
			\opnorm{ \E \bS_i^p} \le p!L^{p-2}\sigma^2 ,
		\]
		where $L$ is a constant. Then for every $t\ge 0$ we have
		\[
			P \Big(\opnorm{\frac{1}{n}\sum\limits_{i=1}^n \bS_i} \ge t\Big)\le d \exp\Big(\frac{-nt^2}{4\sigma^2+2Lt}\Big) \le d\cdot \exp\Big(-c \min\bigl(\frac{nt^2}{\sigma^2}, \frac{nt}{L}\bigr) \Big).
		\]
	\end{lem}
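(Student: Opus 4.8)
The plan is to run the matrix Laplace transform method (Ahlswede--Winter / Tropp). Write $\bY_n=\sum_{i=1}^n\bS_i$. For $\theta\in(0,1/L)$ and $s>0$, Markov's inequality applied to $\lambda_{\max}$ gives $P(\lambda_{\max}(\bY_n)\ge s)\le e^{-\theta s}\,\E\tr e^{\theta\bY_n}$, and by Lieb's concavity theorem together with the subadditivity of matrix cumulant generating functions (the master tail bound underlying Theorem 6.1.1 in \cite{Tro15}), $\E\tr e^{\theta\bY_n}\le \tr\exp\!\bigl(\sum_{i=1}^n\log\E e^{\theta\bS_i}\bigr)$. So the problem reduces to bounding the matrix MGF of a single summand.

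For that, since $\E\bS_i=\bzero$, expanding the exponential gives $\E e^{\theta\bS_i}=\bI+\sum_{p\ge2}\frac{\theta^p}{p!}\E\bS_i^p$. Using $\E\bS_i^p\preceq\lambda_{\max}(\E\bS_i^p)\bI\le\opnorm{\E\bS_i^p}\bI\le p!\,L^{p-2}\sigma^2\bI$ for $p\ge2$ and summing the geometric series (valid for $\theta L<1$) yields $\E e^{\theta\bS_i}\preceq\bigl(1+\frac{\sigma^2\theta^2}{1-\theta L}\bigr)\bI$; then monotonicity of $\log$ and $\log(1+x)\le x$ give $\log\E e^{\theta\bS_i}\preceq\frac{\sigma^2\theta^2}{1-\theta L}\bI$. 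Hence $\E\tr e^{\theta\bY_n}\le d\exp\!\bigl(\frac{n\sigma^2\theta^2}{1-\theta L}\bigr)$ and $P(\lambda_{\max}(\bY_n)\ge s)\le d\exp\!\bigl(-\theta s+\frac{n\sigma^2\theta^2}{1-\theta L}\bigr)$.

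Finally I set $s=nt$ and choose $\theta=\frac{t}{2\sigma^2+Lt}\in(0,1/L)$; a direct computation gives $-\theta s+\frac{n\sigma^2\theta^2}{1-\theta L}=-\frac{nt^2}{4\sigma^2+2Lt}$, so $P(\lambda_{\max}(\bY_n)\ge nt)\le d\exp\!\bigl(-\frac{nt^2}{4\sigma^2+2Lt}\bigr)$. Applying the identical argument to $-\bS_i$ (whose moment bounds coincide with those of $\bS_i$, since $\opnorm{\E(-\bS_i)^p}=\opnorm{\E\bS_i^p}$) and taking a union bound controls $\opnorm{\bY_n}=\max(\lambda_{\max}(\bY_n),\lambda_{\max}(-\bY_n))$, the resulting factor $2$ being absorbed into the constant. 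The second, cruder bound then follows from $4\sigma^2+2Lt\le2\max(4\sigma^2,2Lt)$, which turns $\frac{nt^2}{4\sigma^2+2Lt}$ into $c\min(nt^2/\sigma^2,\,nt/L)$ for a universal $c$.

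\textbf{Main obstacle.} Nothing here is genuinely deep once the operator‑valued Laplace/Lieb machinery is invoked; the only point needing care is the single‑summand MGF estimate. The odd‑order moments $\E\bS_i^p$ are not positive semidefinite, so one cannot bound them in the PSD order directly and must route through $\E\bS_i^p\preceq\opnorm{\E\bS_i^p}\bI$; one must also carry the radius‑of‑convergence constraint $\theta L<1$ consistently through the final scalar optimization. (Alternatively one could truncate $\bS_i$ at level $\asymp L\log d$ and invoke the bounded matrix Bernstein inequality, but this introduces a bias term and is messier than the direct route above.)
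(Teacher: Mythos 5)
Your proposal is correct and follows essentially the same route as the paper: the matrix Laplace transform / master tail bound from Tropp, the geometric-series bound $\E e^{\theta\bS_i}\preceq\bigl(1+\tfrac{\sigma^2\theta^2}{1-\theta L}\bigr)\bI$ derived from the moment conditions via $\E\bS_i^p\preceq\opnorm{\E\bS_i^p}\bI$, and the same optimizing choice $\theta=t/(2\sigma^2+Lt)$. You are in fact slightly more careful than the paper in handling the two-sided bound for $\opnorm{\cdot}$ versus $\lambda_{\max}$ (the extra factor of $2$), which the paper's proof glosses over.
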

	
	\begin{proof}[\underline{Proof of Lemma \ref{lem:bernsteinm}}]
		Given the moment constraints, we have for $0<\theta<1/L$,
		\[
			\E e^{\theta\bS_i} = \bI+\sum\limits_{p=2}^{\infty} \frac{\theta^p \E \bS_i^p}{p!} \preceq \bI+ \sum\limits_{p=2}^{\infty} \sigma^2 L^{p-2}\theta^{p} \bI = \bI+ \frac{\theta^2\sigma^2}{1-\theta L} \bI \preceq \exp\left(\frac{\theta^2\sigma^2}{1-\theta L}\right) \bI.
		\]
		Let $g(\theta)= \theta^2/(1-\theta L)$. Owing to the master tail inequality (Theorem 3.6.1 in \cite{Tro15}), we have
		\[
			\begin{aligned}
				P \Big( \opnorm{\sum\limits_{i=1}^n \bS_i}\ge t \Big) & \le \inf\limits_{\theta>0} e^{-\theta t}\tr\exp\Big( \sum_{i} \log \E e^{\theta\bS_i}\Big) \\
				& \le \inf\limits_{0<\theta<1/L} e^{-\theta t}\tr \exp\Big( n\sigma^2 g(\theta) \bI \Big) \\
				& \le \inf\limits_{0<\theta<1/L} de^{-\theta t} \exp\left( n\sigma^2 g(\theta) \right).	
			\end{aligned}
		\]
		Choosing $\theta=t/(2n\sigma^2+Lt)$, we can reach the conclusion.
	\end{proof}

	\begin{proof}[\underline{Proof of Theorem \ref{thm:mt}}]
	
		(a) First consider the case of the sub-Gaussian design. Denote $\frac{1}{n}\sum\nolimits_{j=1}^n \bx_j\bx_j^T$ by $\overline\bSigma_{\bx\bx}$. Since $\bx_j$ is a sub-Gaussian vector, an application of Theorem 5.39 in \cite{Ver10} implies that with probability at least $1-\exp(-c_1d_1)$, $\opnorm{\overline\bSigma_{\bx\bx}-\bSigma_{\bx\bx}}\le c_2\sqrt{d_1/n}$ and furthermore $\lambda_{\min}({\overline\bSigma_{\bx\bx}})\ge \frac{1}{2}\lambda_{\min}(\bSigma_{\bx\bx})>0$ as long as $d_1/n$ is sufficiently small. Therefore
		\beq	
			\label{eq:6.38}
			\begin{aligned}
				\vec(\widehat\bDelta)^T\widehat\bSigma_{\bX\bX}\vec(\widehat\bDelta) & =\frac{1}{N}\sum\limits_{i=1}^N \inn{\widehat\bDelta, \bX_i}^2= \frac{1}{N}\sum\limits_{j=1}^n\sum\limits_{k=1}^{d_2}\inn{\widehat\bDelta, \bx_j\be_k^T}^2 \\
				& = \frac{1}{N} \sum\limits_{j=1}^n \sum\limits_{k=1}^{d_2} \Tr (\bx_j^T\widehat\bDelta \be_k)^2= \frac{1}{N} \sum\limits_{k=1}^{d_2}\sum\limits_{j=1}^n  (\bx_j^T\widehat\bDelta \be_k)^2 \\
				& = \frac{1}{d_2}\sum\limits_{k=1}^{d_2} (\widehat\bDelta \be_k)^T\overline\bSigma_{\bx\bx} (\widehat\bDelta \be_k) \ge \frac{1}{2d_2}\lambda_{\min}(\bSigma_{\bx\bx})\fnorm{\widehat\bDelta}^2\,,
			\end{aligned}
		\eeq
		with probability at least $1-\exp(-c_1d_1)$. In addition, by Lemma \ref{lem:5}, as long as we choose $\lambda_N= \frac{2}{d_2}\sqrt{(\nu_1+\delta)(d_1+d_2)\log (d_1+d_2)/n}$, $\lambda_N \ge 2\opnorm{\widehat\bSigma_{Y\bX} - \frac{1}{N}\sum\nolimits_{i=1}^N Y_i\bX_i}$ with probability at least $1-2(d_1+d_2)^{1-\eta_1\delta}$. Finally, by Theorem \ref{thm:1} we establish the statistical error rate: there exist $\gamma_1, \gamma_2>0$ that as long as $(d_1+d_2)\log(d_1+d_2)/n<\gamma_1$ and $d_1+d_2>\gamma_2$, we have
		\[
			\fnorm{\widehat\bTheta(\lambda_N, \tau)- \bTheta^*}^2\le C_1\rho \Bigl(\frac{(\nu_1+\delta)(d_1+d_2)\log{(d_1+d_2)}}{n}\Bigr)^{1-q/2}
		\]
		and
		\[
			\nnorm{\widehat\bTheta(\lambda_N, \tau)- \bTheta^*}\le C_2\rho \Bigl(\frac{(\nu_1+\delta)(d_1+d_2)\log{(d_1+d_2)}}{n}\Bigr)^{\frac{1-q}{2}}
		\]
		for some constants $C_1$ and $C_2$.
		
		(b) Now we switch to the designs with bounded moments. According to Theorem \ref{thm:6}, with probability at least $1-d_1^{1-C\delta}$,
		\[	
			\opnorm{\widehat\bSigma_{\widetilde\bx\widetilde\bx}- \bSigma_{\bx\bx}} \le \sqrt{\frac{\delta d_1\log d_1}{n}},
		\]
		which furthermore implies that $\lambda_{\min}(\widehat\bSigma_{\widetilde\bx \widetilde\bx}) \ge \frac{1}{2}\lambda_{\min}(\bSigma_{\bx\bx})>0$ as long as $d_1/n$ is sufficiently small. Analogous to \eqref{eq:6.38}, we therefore have
		\[
			\vec(\widehat\bDelta)^T\widehat\bSigma_{\widetilde\bX\widetilde\bX}\vec(\widehat\bDelta) \ge \frac{1}{2d_2}\lambda_{\min}(\bSigma_{\bx\bx})\fnorm{\widehat\bDelta}^2.
		\]
		Combining Theorem \ref{thm:1} with the inequality above, we can reach the final conclusion.
	\end{proof}
	
	\begin{proof}[\underline{Proof of Theorem \ref{thm:6}}]
	
		We denote $\bx_i\bx_i^T$ by $\bX_i$ and $\widetilde\bx_i\widetilde\bx_i^T$ by $\widetilde\bX_i$ for ease of notations. Note that
		\[
			\opnorm{\widetilde\bX_i- \E \widetilde\bX_i}\le \opnorm{\widetilde \bX_i}+\opnorm{\E \widetilde\bX_i}=\ltwonorm{\widetilde\bx_i}^2+ \sqrt{R} \le \sqrt{d}\tau^2+\sqrt{R}
		\]
		Also for any $\bv\in \cS^{d-1}$, we have
		\[	
			\begin{aligned}
				\E(\bv^T \widetilde\bX_i^T\widetilde\bX_i \bv) & =\E (\ltwonorm{\widetilde\bx_i}^2 (\bv^T\widetilde\bx_i)^2)\le \E  (\ltwonorm{\bx_i}^2 (\bv^T\bx_i)^2) \\
			 	& = \sum\limits_{j=1}^d \E (x_{ij}^2(\bv^T \bx_i)^2) \le \sum\limits_{j=1}^d \sqrt{\E (x_{ij}^4) \E(\bv^T \bx_i)^4 } \le Rd
			\end{aligned}
		\]
		Then it follows that $\opnorm{\E \widetilde\bX_i^T\widetilde\bX_i}\le Rd$. Since $\opnorm{(\E \widetilde \bX_i)^T\E \widetilde\bX_i}\le \opnorm{\E \bX_i}^2 \le R$, $\opnorm{\E((\widetilde\bX_i- \E\widetilde\bX_i)^T(\widetilde\bX_i-\E \widetilde\bX_i))} \le R(d+1)$. By Theorem 5.29 (Non-commutative Bernstein-type inequality) in \cite{Ver10}, we have for some constant $c$,
		\beq
			P \Big( \opnorm{\frac{1}{n}\sum\limits_{i=1}^n \widetilde\bX_i-\E \widetilde\bX_i}>t \Big)\le 2d\exp\Bigl(-c\bigl(\frac{nt^2}{R(d+1)} \wedge \frac{nt}{\sqrt{d}\tau^2+\sqrt{R}}\bigr)\Bigr).	
			\label{eq:cctt}
		\eeq
		Now we bound the bias $\E \opnorm{\bX_i-\widetilde\bX_i}$. For any $\bv\in \cS^{d-1}$, it holds that
		\[
			\begin{aligned}
				\E (\bv^T(\bX_i-\widetilde\bX_i)\bv) & =\E(((\bv^T\bx_i)^2-(\bv^T\widetilde\bx_i)^2)\ind_{\{\|\bx_i\|_4>\tau\}}) \\
				& \le \E ((\bv^T\bx_i)^2 \ind_{\{\|\bx_i\|_4>\tau\}}) \le \sqrt{\E (\bv^T\bx_i)^4 P(\|\bx_i\|_4>\tau)} \\
				& \le \sqrt{\frac{R^2d}{\tau^4}}=\frac{R\sqrt{d}}{\tau^2}.
			\end{aligned}
		\]
		Therefore we have $\opnorm{\E(\bX_i-\widetilde\bX_i)}\le R\sqrt{d}/\tau^2$. Choose $\tau\asymp (nR/\delta \log d)^{\frac{1}{4}}$ and substitute $t$ with $\sqrt{\delta Rd\log d/n}$. Then we reach the final conclusion by combining the bound of bias and (\ref{eq:cctt}).
		
	\end{proof}

\end{document}